\definecolor{darkred}{rgb}{0.5,0,0}
\definecolor{darkgreen}{rgb}{0,0.5,0}
\definecolor{darkblue}{rgb}{0,0,0.5}
\newcommand{\CC}{{\mathbb C}}
\newcommand{\LL}{{\mathbb L}}
\newcommand{\MM}{{\mathbb M}}
\newcommand{\NN}{{\mathbb N}}
\newcommand{\PP}{{\mathbb P}}
\newcommand{\QQ}{{\mathbb Q}}
\newcommand{\RR}{{\mathbb R}}
\newcommand{\TT}{{\mathbb T}}
\newcommand{\ZZ}{{\mathbb Z}}
\def\uu
\def\pp{\text {\rm \bf \c{p}}}
\def\cA{\mathcal{A}}
\def\cC{\mathcal{C}}
\def\cD{\mathcal{D}}
\def\cE{\mathcal{E}}
\def\cF{\mathcal{F}}
\def\cG{\mathcal{G}}
\def\cH{\mathcal{H}}
\def\cJ{\mathcal{J}}
\def\cK{\mathcal{K}}
\def\cL{\mathcal{L}}
\def\cM{\mathcal{M}}
\def\cO{\mathcal{O}}
\def\cR{\mathcal{R}}
\def\cS{\mathcal{S}}
\def\cT{\mathcal{T}}
\def\cU{\mathcal{U}}
\def\cV{\mathcal{V}}
\def\eps {\varepsilon}
\def\part{\partial}
\newcommand\id{\mathrm{Id} }
\newtheorem{theo}{Theorem}
\newtheorem{cor}[theo]{Corollary}
\newtheorem{lem}[theo]{Lemma}
\numberwithin{equation}{section}
\newcounter{numero}
\newcounter{numerob}
\newcounter{numerobb}
\newcommand{\myunderbar}[1]{\underline{#1\mkern-4mu}\mkern4mu }
\newcommand{\myoverbar}[1]{\mkern 1.5mu\overline{\mkern-3.3mu#1\mkern-.5mu}\mkern 1.5mu}
\newcommand{\myoverbarM}[1]{\mkern 1.5mu\overline{\mkern-5.5mu#1\mkern-.8mu}\mkern 1.5mu} 
\newcommand\rwidehat[1]{
  \savestack{\tmpbox}{\stretchto{
      \scaleto{
        \scalerel*[\widthof{\ensuremath{#1}}]{\kern.1pt\mathchar"0362\kern.1pt}
                 {\rule{0ex}{\textheight}}
                }{\textheight} 
    }{2.3ex}}
  \ensurestackMath{\stackon[-6.9pt]{#1}{\tmpbox}}
}\parskip 1ex
\title[Euler--Maxwell Two-Fluid]{
  %\bf The low Mach number limit \\of the Euler--Maxwell two-fluid system
  The incompressible limit of the \\
  Euler--Maxwell two-fluid system
}
\author[N. Besse]{Nicolas Besse}
\address[Nicolas Besse]{Observatoire de la C\^ote d'Azur,
               Bd de l'Observatoire CS 34229,
               06304 Nice Cedex 4, France}
\author[C. Cheverry]{Christophe Cheverry}
\address[Christophe Cheverry]{Institut Math\'ematique de Rennes,
Campus de Beaulieu, 263 avenue du G\'en\'eral Leclerc CS 74205
35042 Rennes Cedex\\FRANCE}
\begin{document}

$ \, $

\maketitle

\noindent \textbf{\small Abstract.} {\small In this text, the filtering unitary group method developed, among others, by S. Schochet \cite{S07}, is adapted to prove the existence and well-posedness of modulation equations describing the incompressible limit of the Euler--Maxwell Two-Fluid (EMTF) system. The reduced model  captures up to the ion and electron skin depths the long-time  behavior of solutions near a constant neutral background with non-zero densities. In  the prepared case, the solutions of our  asymptotic equations are in one-to-one correspondence with those of incompressible eXtended  MagnetoHyDrodynamics (XMHD), hence providing a new basis to the XMHD framework  which is currently being studied by physicists through Hamiltonian methods, see P.J. Morrison \cite{KLMLW} {\it et al}. By this way, we can give a simplified access to many  plasma phenomena such as (a form of two-fluid) turbulence, Hall and   inertial effects, as well as collisionless magnetic reconnection.}

{\small \parskip=1pt
\setcounter{tocdepth}{2}
\tableofcontents
}

 %%%%%%%%%%%%%%%%%%%%%%%

\section{Introduction}
\label{s:intro}
More than 99\% of the visible  universe is believed to be in the plasma state, consisting of a significant portion of charged particles (ions and electrons). The dynamics (in time $ \tau \in \RR $) of such conducting fluids is basically described by the Euler--Maxwell Two-Fluid system (EMTF) in space dimension three.

There is a general consensus that the two-fluid description of plasmas can be based on the following system of equations
\begin{equation}\label{TFEM}
  \left\{
  \begin{array}{l}
    \part_{\tau} {\rm n}_e + \nabla \cdot ({\rm n}_e \, {\rm v}_e) = 0 \,, \smallskip \\
    \part_{\tau} {\rm n}_i \, + \nabla \cdot ({\rm n}_i \,\, {\rm v}_i) = 0 \,, \smallskip \\
    \part_{\tau} ({\rm n}_e {\rm v}_e) +   \nabla \cdot ({\rm n}_e {\rm v}_e\otimes {\rm v}_e ) + \nabla {\rm p}_e/m_e
    \, = -\, {\rm n}_e \,  ( {\rm E} + {\rm v}_e \times {\rm B})/m_e \,, \smallskip \\
   \part_{\tau}  ({\rm n}_i {\rm v}_i) \, +  \nabla \cdot ({\rm n}_i \,{\rm v}_i \otimes {\rm v}_i)  \,+ \, \nabla {\rm p}_i/m_i
    \, = \,Z\, {\rm n}_i \, ({\rm E} + {\rm v}_i \times {\rm B})/m_i \,,\smallskip \\
    \displaystyle  \part_{\tau} {\rm E} \, - \, \nabla \times {\rm B} =   {\rm n}_e \, {\rm v}_e  - Z \,  {\rm n}_i \, {\rm v}_i \,, \smallskip \\
    \part_{\tau} {\rm B} \, + \, \nabla \times {\rm E} = 0 \,,
  \end{array}
  \right.
\end{equation}
completed with Gauss's laws
\begin{equation}\label{Gausslaw}
  \nabla \cdot {\rm B} = 0 \,, \qquad  \nabla \cdot {\rm E} \, + \, {\rm n}_e   - Z \,  {\rm n}_i = 0 \,,
\end{equation}
and with the barotropic closure
\begin{equation}\label{barotrop}
  {\rm p}_s = {p}_s( {\rm n}_s)\,, \quad  s \in \{e,i\}\,.
\end{equation}
The electrons and ions have respectively densities ${\rm n}_e$ and $ {\rm n}_i $, velocities $ {\rm v}_e $ and $ {\rm v}_i $, and dimensionless masses $ m_e $ and $m_i$ in $ \RR_+^* $. The parameter  $ Z $ is the charge number. From now on\footnote{By changing $ {\rm n}_i $, $ m_i $ and $ p_i $ into $ Z {\rm n}_i $, $ m_i /Z $ and $ p_i(\cdot/Z)$, we recover \eqref{TFEM}-\eqref{Gausslaw}-\eqref{barotrop} with $ Z = 1 $.}, we work with $ Z=1 $. The scalar functions $ {p}_s : \RR_+ \rightarrow \RR $, with $s \in \{e,i\}$,  represent  barotropic pressure laws satisfying $ {p}_s' > 0 $ on $ \RR_+^* $. The self-consistent electromagnetic field consists of the electric field $ {\rm E} $ and the magnetic field $ {\rm B} $. The complete unknonw is
\[
  {\rm U} = {}^t ({\rm n}_e,{\rm n}_i,{}^t{\rm v}_e,{}^t{\rm v}_i,{}^t{\rm E},{}^t{\rm B}) \in \RR_+ \times \RR_+ \times \RR^3 \times \RR^3 \times \RR^3 \times \RR^3\,.
\]
EMTF is a system of conservation laws, which is written above in standard dimensionless variables\footnote{The standard nondimensionalization consists in taking all dimensionless ratios of dimensional constants to one when performing the dimensional analysis. This amounts to setting the physical constants (such as the light velocity, the permitivity, the permeability and the electron charge) to one. But here the dimensionless positive masses $m_e$ and $m_i$ are kept in order to bear in mind the smallness of $ m_i $ and of the ratio $ \updelta := m_e/m_i $.}. Note that we do not add dissipation or relaxation terms that could improve the stability \cite{DLZ,ISYG,XXK} but that would also destroy or lessen the purely  hyperbolic properties that we aimed at  exhibiting. The flat neutral equilibrium
\begin{equation}
  \label{neutral}
  \myunderbar{U} = {}^t (\underline{n},\underline{n},0,0,0,0) \,, \qquad \underline{n} \in \RR_+ \,,
\end{equation}
is a constant global solution to \eqref{TFEM}-\eqref{Gausslaw}-\eqref{barotrop}. From now on, we fix $ s\in \RR $ with $ s>5/2  $, and we denote by $ H^s $ the $ L^2 $-based Sobolev space on $ \RR^3 $ or $ \TT^3 $. Given $ \eps \in ]0,\eps_0 ] $ with $ \eps_0 \in \RR_+^* $, at time $ {\tau}= 0 $, we modify $ \myunderbar{U} $ according to
\[
  {\rm U}(0,\cdot) = {\rm U}^0_\eps = \myunderbar{U} + \eps \, U^0_\eps \,, \qquad U^0_\eps = {}^t (n_{e \eps}^0, n_{i \eps}^0, {}^tv_{e \eps}^0,{}^tv_{i \eps}^0,{}^tE^0_\eps, {}^tB^0_\eps) \,.
\]
The expression $ U^0_\eps (x) $ may be just $ U^0_0 (x) $ or a series in powers of $ \eps $ like
\[
U^0_\eps (x) = U^0_0 (x) + \eps \, U^0_1 (x) + \eps^2 \, U_2^0 (x) + \cdots + \cO (\eps^N) \, , \qquad n \in \mathbb N^* \, .
\]
The terms $ U_j^0 $ with $ j \geq 1 $ may even contain some rapid oscillations. Still, we assume that there exists an initial data $U_0^0(x) $ such that
\begin{equation}
  \label{asst0}
  \ \ U^0_0 = {}^t (n_{e 0}^0, n^0_{i 0}, {}^tv_{e 0}^0,{}^tv_{i 0}^0,{}^tE^0_0,{}^tB^0_0) \in H^s \, ,  \qquad \lim_{\eps \rightarrow 0+} \ \| U^0_\eps - U^0_0 \|_{H^s} = 0 \,.
\end{equation}
In addition, according to \eqref{Gausslaw}, we impose
\begin{equation}
  \label{Gausssld}
 \nabla \cdot B^0_\eps = 0 \,, \qquad \nabla \cdot E^0_\eps + n_{e\eps}^0 - n_{i\eps}^0 = 0\,.
\end{equation}
In particular, this implies that
\begin{equation}
  \label{Gausssldencore}
 \nabla \cdot B^0_0 = 0 \,, \qquad \nabla \cdot E^0_0 + n_{e0}^0 - n_{i0}^0 = 0\,.
\end{equation}
The two constraints inside \eqref{Gausssld} are propagated by the Maxwell--Amp\`ere equation and by the  Maxwell--Faraday equation, i.e., respectively the fifth and sixth equations of \eqref{TFEM}. The system \eqref{TFEM} is symmetrizable. The Cauchy problem \eqref{TFEM}-\eqref{Gausslaw}-\eqref{barotrop} associated with initial data in $ H^s$ is therefore locally well-posed in the whole space as well as in periodic domains. We select a (non-zero)  density $ \underline{n} \in \RR_+^* $. In so doing, for $ \eps_0 $ small enough, the vacuum can be avoided, and the lifespan $ \cT_\eps \in \RR_+^* \cup \{+\infty \} $ of the solution $ {\rm U}_\eps = \myunderbar{U} + \eps \, {U}_\eps $ is (at least) inversely proportional to the $ H^s $-norm of the small  perturbation $ \eps \, U^0_\eps $. From \eqref{asst0}, we can deduce that
\[
\sup \ \bigl \lbrace  \| U^0_\eps \|_{H^s} \, ; \, \eps \in ]0,\eps_0] \rbrace < + \infty\,.
\]    
It follows that
\[
\exists \, T \in \RR_+^* \, ;  \quad \cT_\eps \geq  T /\eps\,, \quad \forall \eps \in ]0,\eps_0 ]\,.
\]
In this text, we implement periodic solutions\footnote{There are many nuances \cite{Ala08,Ga} between the whole space $\RR^3$, domains with (solid-wall) boundary conditions, and the periodic box $\TT^3$. All these situations are interesting, with specific features (various properties of dispersion, $ \cdots $). We opt here for the periodic discussion because it facilitates the Fourier analysis.} (say $ 2 \pi $-periodic solutions), that is a spatial variable $ x $ in the torus $ \TT^3 $ with $ \TT := \RR/ (2\pi \ZZ) $. We say that $ \tau $ is the {\it fast} time variable, whereas $ t := \eps \, \tau \in [0,T] $ is the {\it slow} time variable. The leading behavior of solutions on the long time interval $ [0,T /\eps]$ may be detected by looking at
\begin{equation}
  \label{defUepst}
\begin{array}{rl}
U_\eps (t,x) \! \! \! & = {}^t (n_{e\eps},n_{i \eps},{}^tv_{e \eps},{}^tv_{i\eps},{}^tE_\eps,{}^tB_\eps)(t,x) \smallskip \\
\ & := \eps^{-1} \ ({\rm U}_\eps - \myunderbar{U} ) (\eps^{-1} \, t,x)\, .
\end{array}
\end{equation}
Our aim is to describe on $ [0,T] $ the asymptotic behavior of the family $ \{ U_\eps (t,x) \}_\eps $ when $ \eps $ goes to zero. The ratios of the velocities to the sound speeds are given by
\[
  {\rm v}_{s \eps} / \sqrt{p'_s ({\rm n}_{s \eps})} =
  \eps \ v_{s\eps} /\sqrt{p'_s (\underline{n})} + \mathcal{O}(\eps^2 ) = \mathcal{O}(\eps ) \,, \qquad s \in \{e,i \}\,.
\]
So the discussion falls under the scope of {\it two-fluid low Mach number limits}. This topic stems from pioneer contributions of Klainerman--Majda \cite{KM, KM82} and Kreiss \cite{K}. Then, it was  developed concerning  compressible Euler equations (isentropic or not), see \cite{LM,MS,S07} or the survey articles  \cite{Ala08,Ga}. By extension, it has more  recently been investigated regarding ideal  magnetohydrodynamics, as for  instance in  \cite{BC2,JJLX,JSX19}.

Among existing approaches, the {\it filtering unitary group method} initiated by S. Schochet is particularly efficient since it works in a broad context including \eqref{TFEM}-\eqref{Gausslaw}-\eqref{barotrop}. The general theory relies on a reformulation of the equations into a penalized system like
\begin{equation}
  \label{HSeps}
  \part_t U_\eps - \frac{1}{\eps} \ \cL(\myunderbar{U}) \, U_\eps = A(\eps,\myunderbar{U},U_\eps,D_x) \, U_\eps + f(\eps,\myunderbar{U},U_\eps) \,.
\end{equation}
In \eqref{HSeps}, the linear operator $ \cL = \cL(\myunderbar{U})$ is a skew-adjoint operator given by a differential operator of order $ 1 $ with constant coefficients. Moreover,
\[
A(\eps,\myunderbar{U},U_\eps,D_x) := \sum_{j=1}^d A_j(\eps,\myunderbar{U},U_\eps) \ \part_{x_j}\, , \qquad A_j = {}^t A_j \in \mathcal M_{N\times N} (\RR)\,,
\]
where the integers $ d \in \NN^* $ and $ N \in \NN^* $ denote respectively the space dimension and the number of state variables (in our situation, we have $ d=3 $ and $ N = 14 $). In addition, the $ d $ functions $ A_i: [0,\eps_0] \times \RR^N \times \RR^N  \rightarrow \cM_{N \times N} (\RR)  $ and $ f: [0,\eps_0] \times  \RR^N \times \RR^N \rightarrow \RR^N $ are assumed to be smooth in their arguments. Then \cite{Met09, Sch94b, S07}, there exists a {\it profile} $ U_0^r (\tau,t,x) $, where the exponent $ r $ is for \underline{r}apid, such that
\begin{equation}
  \label{difflim}
  \forall \sigma <  s \,, \qquad \lim_{\eps \rightarrow 0 +} \| U_\eps (t,x) - U_0^r (t/\eps,t,x) \|_{\mathscr{C}([0,T];H^{\sigma})} = 0\,.
\end{equation}
The expression $ U_0^r $ can be factorized into
\[
U_0^r (\tau,t,x) = e^{\tau \cL} \, U_0 (t,x) \,, \qquad U_0 = {}^t (n_{e 0}, n_{i 0}, {}^tv_{e0},{}^tv_{i0},{}^tE_0,{}^tB_0)\,.
\]
By construction, the function $ U_0^r (\cdot,t,x) $ is almost periodic with respect to $ \tau $, and we can implement the time-averaging (mean value) operator
\[
\MM_\tau \bigl( U_0^r (\cdot,t,x) \bigr) := \lim_{\cT \rightarrow + \infty} \frac {1}{\cT}\int_0^\cT  U_0^r (\tau,t,x) \ d \tau \,.
\]
Observe that $ U_0 (t,x) $ can be seen as the weak $ L^2 ([0,T]\times \RR^3)$-limit\footnote{In the general case, the strong $ L^2 $-limit of $U_\varepsilon$ as $\varepsilon \rightarrow 0 $ may not exist.} of the family $ \{ U_\eps \}_\eps $ when $ \eps $ goes to zero. It must satisfy the following {\it modulation equation} (called FLM for Fast Limit Model)
\begin{equation}
  \label{modu0}
  \qquad \ \part_t U_0 = \MM_\tau \Bigl \lbrack e^{- \tau \, \cL } \, \bigl( A(0,\myunderbar{U}, e^{\tau \, \cL } \, U_0,D_x)
  \, e^{\tau \, \cL } \, U_0 \bigr) + e^{- \tau \, \cL } f(0,\myunderbar{U}, e^{\tau \, \cL } \, U_0) \Bigr \rbrack \,,
\end{equation}
associated with the initial data
\begin{equation}
  \label{modu0ini0}
U_0 (0,\cdot) = U^0_0 \,.
\end{equation}
We refer to Appendix~\ref{appendix:ME} for a derivation and a justification of \eqref{difflim}-\eqref{modu0}-\eqref{modu0ini0} from \eqref{HSeps}. The Cauchy problem \eqref{modu0}-\eqref{modu0ini0} is locally well-posed in $ H^s $. Since $ \cL = - \cL^* $, we can find projectors $ \PP $ and $ \QQ $ such that
\[
\PP = \PP^* = \PP^2\,, \quad  \QQ = \QQ^* = \QQ^2\,, \quad  \PP + \QQ = \text{Id}\,,
\quad  \cL \, \PP = \PP \, \cL = 0 \,, \quad  \cL \, \QQ = \QQ \, \cL = \QQ \, \cL \, \QQ \,.
\]
In other words, $\PP$ is the orthogonal projector onto the kernel ${\rm Ker}\, \cL$ of $\cL$, while  $\QQ$ is the orthogonal projector onto the image ${\rm Im} \, \cL$ of $\cL$. By construction, the operator $ \QQ \, \cL \, \QQ $ is skew-adjoint with non-zero spectrum. This means that $ \QQ \, \cL \, \QQ $ is invertible, and that
\begin{equation}
  \label{ecc}
  e^{\tau \, \cL } \, \QQ \, U_0 (t) = e^{\tau \, \QQ \, \cL \,  \QQ} \, \QQ \,U_0(t) \,, \qquad \MM_\tau \bigl \lbrack e^{\tau \, \QQ \, \cL \,  \QQ} \, \QQ \, U_0(t) \bigr \rbrack = 0 \,.
\end{equation}
The analysis of the equation \eqref{modu0} leads to a distinction between {\it slow} configurations (relying only on $ t $) and {\it fast} ones  (involving both $ \tau $ and $ t $). The slow case is when the profile $ U_0^r (\tau,t,x) $ does not depend on $ \tau $, so that $ U_0^r \equiv U_0 $. Observe that $ U_0^r \equiv U_0 $ if and only if $ U_0 = \PP \, U_0 $, which implies $e^{ \tau \, \cL }\,U_0 = \PP \, U_0$. Then, using \eqref{ecc}, it is easy to see that \eqref{modu0} reduces to the following Slow Limit Model (SLM)
\begin{equation}
  \label{modu0prepa}
\part_t (\PP \, U_0) = \PP \, \bigl \lbrack A(0,\myunderbar{U},\PP \, U_0,D_x) \, \PP \, U_0 \bigr \rbrack + \PP \, f(0,\myunderbar{U},\PP \,U_0) \,.
\end{equation}
The initial data $ U_0^0 $ is said to be {\it prepared} when $ U_0^0 = \PP \, U_0^0 $, or equivalently when $U_0^0$ satisfies $\cL \,U_0^0 =0$. More generally, the family $ \{ U^0_\eps \}_\eps $ is called prepared when $ {\part_t U_\eps}_{\mid t = 0 } = \mathcal{O}(1) $ holds true in $ H^{s-1} $. It is called {\it unprepared} when $ {\part_t U_\eps}_{\mid t = 0 } = \mathcal{O}(1/\eps)  $.

The importance of SLM, that is of \eqref{modu0prepa}, is not limited to the periodic context $ \TT^3 $. Much to the contrary, it is also  considered as the generic model when dealing in the whole space $ \RR^3 $ with  localized unprepared data (in $ H^s $ with $ s > 5/2 $). This is due to the dispersive properties of the unitary group associated with the penalized (linear) operator\footnote{Historically \cite{KM82,Uka86}, for compressible Euler equations, the relevance of SLM was proved through dispersive estimates on the wave equation. This argument was improved or adapted in \cite{DG99,Mas01} through Strichartz estimates \cite{KT98}. Alternatively \cite{MS00,MS}, microlocal defect measures could be exploited to show the convergence of EMTF towards SLM.}.

As for incompressible limits \cite{Ga,KM,LM,MS,S07}, $ U_0^0 $ is prepared on condition that
\begin{equation}
  \label{incompre}
 \nabla \cdot v^0_{e0} = 0 \,, \qquad \nabla \cdot v^0_{i0} = 0 \,.
\end{equation}
But, as will be seen, this also includes a number of conditions on $ (n_{e 0}^0, n_{i 0}^0,{}^t E^0_0,{}^t B^0_0 ) $ which complicate the analysis. In the case of prepared data, the solution to \eqref{modu0} can be recovered by just solving \eqref{modu0prepa}. Based on previous achievements\footnote{The derivation of slow and fast modulation equations and their justification hold true for quite general  symmetrizable systems, including \eqref{TFEM}-\eqref{Gausslaw}-\eqref{barotrop}, see  Appendix~\ref{appendix:ME}. There are assumptions required on the initial data, which are verified due to \eqref{asst0}.} \cite{Ala08,KM,K,Met09,Rau12,Sch94b,S07}, the above description does apply when dealing with \eqref{TFEM}-\eqref{Gausslaw}-\eqref{barotrop}. There are however somewhat unconventional subtleties coming from  Gauss's laws. Indeed, the constraints inside \eqref{Gausslaw} lead to complications. Instead of working directly with SLM, we will have to first introduce the notion of Effective Slow Limit Model (ESLM); and we will also have to substitute $ \PP $ for some (more restrictive) \underline{e}ffective orthogonal projector $ \PP_{\! e} $. The condition $ U_0 = \PP_{\! e} \, U_0 $ is equivalent to \eqref{incompre} together with\footnote{Be careful to avoid the confusion between $ \underline n $ and $ \overline n $. The constant $ \overline n $ is distinct from $ \underline n $. It is related to the behavior of the perturbations $ n^0_{s\eps}$ of $ \underline n $.}
\begin{equation}\label{overlineeta}
 \exists \, \overline{n}\in \RR_+^* \, ; \qquad Z \, n_{i0}^0=n_{e0}^0=Z \, \overline{n}\, ,
\end{equation}
as well as
  \begin{equation}
    \label{prepared+gauss0}
    E_0^0=0\,, \qquad B_0^0 = Z \, \underline{n} \, \nabla \times \Delta^{-1}( v_{e0}^0- v_{i0}^0)\,.
  \end{equation}
From now on, we will use \eqref{modu0} and  \eqref{modu0prepa} to designate respectively the FL and SL Models issued  from \eqref{TFEM}-\eqref{Gausslaw}-\eqref{barotrop}. These (somewhat abstract) modulation equations provide information on a universal behavior in plasmas since, starting from small perturbations of a constant background,  after a certain period of time, the solutions behave like $ \myunderbar{U} + \eps \, U_0^r + o(\eps) $. Alternatively, for well-adjusted plasma parameters, resorting to this ansatz is pertinent.
Now, it is interesting to put in perspective the foregoing with known asymptotic results about conducting two-fluids, which can mainly be divided into two categories:
\begin{itemize}
\item {\it Generalized irrotational flows}. This is when
  \begin{equation}
    \label{Pausadercdt}
    \updelta \, \nabla \times {\rm v}_e = - \nabla \times {\rm v}_i = {\rm B} \,, \qquad \updelta :=Z \, m_e / m_i \, ,
  \end{equation}
  where $ \updelta / Z = (m_i/m_e)^{-1} $ reaches its maximum for protons (with  $ \delta \sim 10^{-3} $). The two  relations inside \eqref{Pausadercdt} are preserved over time. In  \cite{GIP}, under rarefied ($ \underline{n} = 0 $) and whole space ($ x \in \RR^3 $) assumptions, remarkable results of global existence ($ \cT_\eps = + \infty $) and stability are  achieved. The   solutions decay to zero in the sup-norm as the time variables $ \tau $ or $ t $ tend to $ + \infty $. The situation in \cite{GIP} differs from ours since we work mainly with $ \underline{n} \in \RR_+^* $ and $ x \in \TT^3 $. Still, in the continuity of \cite{GIP}, we explain in Paragraph \ref{subs:zerodensity} what happens in our framework when $ \underline{n} = 0 $. We  also investigate in Paragraph \ref{subs:irroflows} the consequences of imposing \eqref{Pausadercdt}. We will see that when $ \underline{n} \not = 0 $ and under \eqref{Pausadercdt}, the solution $ U_0 $ to the SL Model is  constant, implying that $ {\rm U}_\eps $ converges to this constant at a rate like $ o(\eps) $ in a time of order $ 1/\eps $. This confirms that \eqref{Pausadercdt} triggers special cancellations. By contrast, in the absence of \eqref{Pausadercdt}, the dynamics of $ U_0 $ is highly non trivial. \\
\item {\it Almost single fluid descriptions}. This is when $ U_0^0 $ is adjusted to satisfy a reinforced condition of quasineutrality and when the two initial velocities are (almost) the same. This means that
  \begin{equation}
    \label{LiPuW}
    n_{e\eps} = Z \, n_{i\eps} + \mathcal{O}(\eps) \, , \qquad v_{e\eps} = v_{i\eps} + \mathcal{O}(\eps)\,  .
  \end{equation}
  This is supplemented by the smallness of the electromagnetic field, namely
  \begin{equation}
    \label{LiPuWeb}
    \nabla \times E_\eps = \mathcal{O}(\eps) \, , \qquad B_\eps = \mathcal{O}(\eps) \, .
  \end{equation}
  Reference for this approach can be made to \cite{LPX,LPW,PL} and the numerous articles which are cited therein. Then, the characteristics of the two species become hardly distinguishable. They can be approximately described by one density (say $ n_{i0} $) and one velocity (say $ v_{i0} $). By adjusting conveniently the plasma parameters, such regimes are accessible in different ways: for instance under quasi-neutral, non-relativistic or  zero-electron-mass assumptions. The limit equations for $ n_{e0} \sim Z \, n_{i0} $ and $ v_{e0} \sim v_{i0} $ are most of the time  designed on the basis of single-fluid models: Euler equations, MHD or e-MHD. In the presence of dissipation,  a preliminary attempt towards establishing Hall  effects was made recently in \cite{PXZ}.
\end{itemize}

Compare the three groups of conditions contained in \eqref{incompre}-\eqref{overlineeta}-\eqref{prepared+gauss0} on the one hand, in \eqref{Pausadercdt} on the other hand, and lastly inside \eqref{LiPuW}-\eqref{LiPuWeb},  to see that they are all quite  distinct. The irrotational condition  \eqref{Pausadercdt} concerns the whole solutions whereas \eqref{incompre}-\eqref{overlineeta}-\eqref{prepared+gauss0} and \eqref{LiPuW}-\eqref{LiPuWeb} are only about dominant terms. The assumptions inside \eqref{LiPuW}-\eqref{LiPuWeb} are particularly restrictive because they enforce the two velocities $ v_{e0} $ and $ v_{i0} $ to be equal, and they  mean that $ B_0 = 0 $.

Now, the filtering theory does not require \eqref{Pausadercdt} or \eqref{LiPuW}-\eqref{LiPuWeb}. In the prepared case, it replaces \eqref{Pausadercdt} and \eqref{LiPuW}-\eqref{LiPuWeb} by other (less constraining) conditions. In the unprepared case, it even allows to get rid of all restrictions, except of course those coming from \eqref{Gausslaw}. This can be done by incorporating through FLM the influence of oscillations, while the preceding articles \cite{LPW,PL,PXZ} and connected papers are often limited to ``non-oscillating'' Hilbert expansions (which do not include the presence of rapid variations).

The reasons why the powerful filtering unitary group method (and its FLM and SLM outcomes) has not yet been exploited in the context of EMTF are probably threefold:

\begin{itemize}
\item[i)] The first is, no doubt, the complexity of the calculations\footnote{While incompressible limits of single-fluid equations have been intensively studied \cite{Ala08,Ga,S07}, to our knowledge, the two-fluid framework has never been investigated in this way. This is  probably due to the underlying significant technical complications.}. As will be seen, the asymptotic study of the diverse interactions between the two velocities and the electromagnetic field requires a careful Fourier analysis (through Fourier multipliers or pseudo-differential tools).
\item[ii)] The second are ways of thinking which may be commonly turned to single fluid conceptions. By contrast, by breaking   \eqref{Pausadercdt} and \eqref{LiPuW}-\eqref{LiPuWeb}, numerous two-fluid effects can be restored. The FLM and SLM models issued from EMTF help to understand some of their important impacts.
\item[iii)] The third, which is related to the preceding explanation, is the (apparent) absence of a target model (well identified in the mathematical community) that could serve to motivate the study. The EMTF system is the source of very rich and complex plasma dynamics. The fact remains that, under other scalings and  hypotheses,  EMTF can lead to a panel of (mostly dispersive) models like Korteweg--de Vries, Kadomtsev--Petviashvili, Zakharov \cite{DLT}, nonlinear Schr\"odinger \cite{Met09} (and so on, see references in \cite{Met09}) that have received a great deal of attention (due perhaps to their more accessible structure). In this list, XMHD is up to now absent.
\end{itemize}

  The three important points i), ii) and  iii) are precisely what  we would like to clarify. We claim that, in the prepared case\footnote{This result should hold true also in the whole space $ \RR^3 $ with  localized unprepared data, even if this direction of research is not developed here..}, limit equations do exist, and that they are completely equivalent to incompressible XMHD\footnote{XMHD is the  set of equations \eqref{lienBB*2}-\eqref{divfreeini}-\eqref{syssimpli} that will subsequently be introduced. We refer the reader to  \cite{BC1} for a study of the Cauchy problem related  to XMHD.}, which has become quite a topical subject in plasma physics, see for instance \cite{AKY,AL,ALM,AGMDG,DML16,GTAM,GP04,KLMLW,KMo,LMMbis,Lu,MLM,SMA}.

As explained in Appendix \ref{s:appendix}, XMHD can be  addressed from two connected perspectives: fluid mechanics\footnote{L\"ust \cite{Lu}-(in 1959) was historically  the first to derive XMHD, already by starting from a two-fluid model, and then by enforcing neutrality and by performing a truncation of the expansion in powers of the electron mass $ m_e $. In our context, the plasma is almost neutral, due to \eqref{neutral}, and there is no truncation implying the smallness of $ m_e $. Instead, we  exploit a hierarchy in powers of the amplitude $ \eps $ of the perturbation.} (Subsection  \ref{sub:fluid}) and Hamiltonian mechanics\footnote{XMHD can be obtained from Hamiltonian's principle. It is compatible with Hamiltonian formalism \cite{AKY,DML16}. Then, the physics lies in the choice of the total energy \cite{KMo}, which is inspired from two-fluid considerations.} (Subsection \ref{sub:hamiltonian}). The issues raised by XMHD are at the cutting edge of mathematical progress in the field of  magnetohydrodynamics (Subsection  \ref{sub:math}). XMHD sheds new light on major scientific challenges: (two-fluid plasma) turbulence, Hall impacts and inertial effects. XMHD is also likely to offer promising developments in  collisionless magnetic reconnection.

Let us recall the content of XMHD. To this end, we have to introduce the dimensionless parameters  $\myunderbar{\rho}$,  $d_e$, and $d_i$ which stand respectively for the normalized total mass density, the normalized electron skin depth (resurgence of electronic inertial effects), and the normalized ion skin depth (resurgence of ionic inertial effects or Hall effects). These parameters are given in terms of the dimensionless density $\underline{n}$,  the dimensionless electronic mass $m_e$, the dimensionless ionic mass $m_i$, and the charge number $Z$ according to the formulas
\begin{equation}
  \label{eqn:cteXMHD}
\qquad  \myunderbar{\rho}:=\underline{n} \, (m_e+ m_i) \, , \qquad d_e :=\sqrt{\updelta}\  \frac{m_i}{Z} \, , \qquad d_i := (1-\updelta) \, \frac{m_i}{Z} \,,
\end{equation}
with $\updelta=Zm_e/m_i$. In XMHD, the unknown is $ (u_0,B^*_0) \in \RR^3 \times \RR^3 $. The part $ u_0 $ is akin to the center of mass velocity, which is
\begin{equation}\label{centermasselocity}
 u_0 := \frac{m_e \, v_{e0} + m_i \, v_{i0}}{m_e + m_i} \, , \qquad u_0^0 := \frac{m_e \, v_{e0}^0 + m_i \, v_{i0}^0}{m_e + m_i} \, .
\end{equation}
The part $ B^*_0 $ is connected to the usual magnetic field $ B_0 $ through the constitutive relation
\begin{equation}
  \label{lienBB*2}
  B_0 = \big(\id - {\underline b} \,\Delta\big)^{-1} B^*_0 \, , \qquad {\underline b} := d_e^2/\myunderbar{\rho} \, .
\end{equation}
All fields are divergence free
\begin{equation}
  \label{divfreeini}
\nabla \cdot u_0 = 0 , \qquad \nabla \cdot B^*_0 = 0 , \qquad \nabla \cdot B_0 = 0 \,,
\end{equation}
and they evolve according to
\begin{equation}
  \label{syssimpli}
  \left \lbrace \begin{array}{l}
    \displaystyle \part_t u_0 + (u_0\cdot \nabla ) u_0 + \nabla p_0 +  \myunderbar{\rho}^{-1}B^*_0 \times (\nabla \times B_0) = 0 \,, \medskip \\
    \displaystyle \part_t B^*_0 + \nabla \times \bigl( B^*_0 \times (u_0 - (d_i/ \myunderbar{\rho}) \,\nabla \times B_0) \bigr) \smallskip\\
    \qquad \  + \, (d_e^2/\myunderbar{\rho})\ \nabla \times \bigl( (\nabla \times u_0)
    \times (\nabla \times B_0 ) \bigr) = 0 \,.
  \end{array} \right.
\end{equation}
The system \eqref{lienBB*2}-\eqref{divfreeini}-\eqref{syssimpli} is closed, and well-posed \cite{BC1}. Now, the complex interplay between \eqref{modu0prepa} and \eqref{syssimpli} is elucidated in the next  statement.

\begin{theo}[Equivalence between SLM and XMHD]
  \label{maintheo}
Assume that $ U^0_0 $ is prepared in the sense that the three conditions \eqref{incompre}, \eqref{overlineeta} and \eqref{prepared+gauss0} are verified.  Then, the asymptotic behavior  of the family $ \{ U_\eps \}_\eps $ is controlled when $ \eps \rightarrow 0 $ as in \eqref{difflim} with a profile $ U^r_0 \equiv U_0 $ not depending on the fast time variable $ \tau $.

More precisely, we find that $ U_0 = \PP \, U_0 = \PP_{\! e} \, U_0 $ is a solution to the Slow Limit Model \eqref{modu0prepa} with initial data $ U_0^0 $. With $ \overline{n} $ as in \eqref{overlineeta}, the expression $ U_0 $ is such that
\begin{equation}
  \label{whichissuch}
  U_0 (t,x)= {}^t \bigl(Z \, \overline{n},\overline{n}, {}^t v_{e0}(t,x), {}^t v_{i0}(t,x), 0 , {}^t B_0(t,x)\bigr) \, .
\end{equation}
Moreover, let $ (u_0,B_0^*) $ be the solution to the XMHD equations \eqref{lienBB*2}-\eqref{divfreeini}-\eqref{syssimpli} with initial data
\[
(u_0,B_0^*)_{\mid t=0} = \bigl( u_0^0 , (\id - \Delta)^{-1} B_0^0 \bigr) \,.
\]
Then, we have
\begin{equation}
  \label{importantrelation}
  \left( \begin{array}{c}
    v_{e0} \\
    v_{i0} \\
    B_0
  \end{array} \right) = \mathfrak{I} \left( \begin{array}{c}u_0 \\
    B^*_0
  \end{array} \right) , \qquad \mathfrak{I} := \left( \begin{array}{cc}
    \id \ \ & - \, ({\underline b}/m_e) \, \nabla \times \big(\id - {\underline b} \,\Delta\big)^{-1}\\
    \id \ \ & \ \ \, ({\underline b}/m_i) \, \,\nabla \times \big(\id - {\underline b} \,\Delta\big)^{-1}\\
    0 \ \ & \big(\id - {\underline b} \,\Delta\big)^{-1}
  \end{array} \right)\, .
\end{equation}
Conversely, we can deduce a solution $ (u_0,B_0^*) $ to XMHD from any solution $ (v_{e0},v_{i0},B_0) $ to SLM by using \eqref{centermasselocity} and \eqref{lienBB*2}.
\end{theo}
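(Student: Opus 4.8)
The plan is to make the abstract relations \eqref{modu0prepa} and \eqref{syssimpli} explicit for the particular operators arising from \eqref{TFEM}, and then to recognize \eqref{importantrelation} as the linear change of unknowns that turns one into the other. As a preliminary step I would linearize \eqref{TFEM}-\eqref{Gausslaw}-\eqref{barotrop} around $\myunderbar{U}$ in the scaling \eqref{defUepst}, so as to read off both the skew-adjoint penalization $\cL$ and the $O(1)$ quadratic field $G(U_0) := A(0,\myunderbar{U},U_0,D_x)U_0 + f(0,\myunderbar{U},U_0)$, together with the symmetrizer that fixes the inner product for $\PP$. Since a prepared datum satisfies $U_0^0 = \PP U_0^0$ and $\cL\,\PP = 0$, the profile $e^{\tau\cL}U_0$ equals $U_0$ for every $\tau$, so FLM \eqref{modu0} collapses to SLM \eqref{modu0prepa}; the first assertion of the theorem is then immediate and the real work is the identification of \eqref{modu0prepa} with \eqref{syssimpli}.

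Next I would characterize $\mathrm{Ker}\,\cL$ Fourier mode by Fourier mode. On each $k \neq 0$ the conditions $\cL U_0 = 0$ force the velocities to be transverse, the electric field to be a gradient tied to the two densities, and $\nabla\times B_0 = -\underline{n}(v_{e0}-v_{i0})$; intersecting with the propagated Gauss constraints \eqref{Gausslaw} then kills the density and electric modes entirely, the relevant scalar symbol being $1 + p_e'(\underline{n})|k|^2 + p_e'(\underline{n})/p_i'(\underline{n}) > 0$, which leaves precisely \eqref{incompre}-\eqref{overlineeta}-\eqref{prepared+gauss0}. On the mode $k=0$ the mean densities are equated by Gauss and the mean velocities coincide because $v_{e0}-v_{i0} = -\underline{n}^{-1}\nabla\times B_0$ has zero average. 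This shows $\PP = \PP_{\! e}$ on the prepared manifold and yields the normal form \eqref{whichissuch}: constant densities $\overline{n}$, vanishing $E_0$, and $B_0$ slaved to $v_{e0}-v_{i0}$. Feeding \eqref{whichissuch} into $G$ makes the density and pressure contributions drop out, so that SLM preserves this structure and reduces to coupled evolution equations for the two divergence-free velocities.

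The heart of the matter is then to pass from $(v_{e0},v_{i0})$ to $(u_0,B_0^*)$. Because the symmetrizer is block-diagonal while the kernel slaves $B_0$ to $v_{e0}-v_{i0}$, the orthogonal projection $\PP$ carries a nonlocal magnetic contribution: the quadratic energy restricted to $\mathrm{Ker}\,\cL$ equals $\myunderbar{\rho}\,|u_0|^2 + B_0\cdot B_0^*$ once written through \eqref{centermasselocity} and \eqref{lienBB*2}, which is exactly what produces the operators $(\id - {\underline b}\,\Delta)^{\pm 1}$ and singles out $\mathfrak{I}$ of \eqref{importantrelation} as the metric-diagonalizing map. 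For the momentum equation I would form the mass-weighted average of the two velocity equations: the Lorentz terms combine, via $v_{e0}-v_{i0} = -\underline{n}^{-1}\nabla\times B_0$, into $-\myunderbar{\rho}^{-1}B_0\times(\nabla\times B_0)$, while the mass-weighted combination $m_e(v_{e0}\cdot\nabla)v_{e0}+m_i(v_{i0}\cdot\nabla)v_{i0}$, expanded through $\mathfrak{I}$, contributes exactly $\myunderbar{\rho}^{-1}{\underline b}\,(\Delta B_0)\times(\nabla\times B_0)$ modulo a gradient; adding these turns $B_0$ into $B_0^* = (\id-{\underline b}\Delta)B_0$ and gives the first line of \eqref{syssimpli}, with $\nabla p_0$ absorbing all gradients through the Leray part of $\PP$.

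For the induction equation I would instead track $v_{e0}-v_{i0}$, equivalently $B_0^*$, by taking the curl of the difference of the two velocity equations and using $\nabla\times B_0 = -\underline{n}(v_{e0}-v_{i0})$ repeatedly; the difference of the Lorentz terms produces the Hall drift $-(d_i/\myunderbar{\rho})\nabla\times B_0$ inside the transport velocity, and the difference of the advective terms produces the electron-inertia contribution $(d_e^2/\myunderbar{\rho})\nabla\times((\nabla\times u_0)\times(\nabla\times B_0))$, matching the second line of \eqref{syssimpli} after inserting \eqref{eqn:cteXMHD}. The converse is then purely algebraic: $\mathfrak{I}$ is block-triangular with invertible diagonal entries $\id$ and $(\id-{\underline b}\Delta)^{-1}$, so \eqref{centermasselocity}-\eqref{lienBB*2} invert it and send any XMHD solution back to a SLM solution. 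The hard part will be precisely the coupled, nonlocal projection $\PP$ of the third step, namely verifying that the block-diagonal symmetrizer together with the $B_0$-slaving reproduces the weight $B_0\cdot B_0^*$ and that no cross term is lost when the Leray projection acts on the quadratic field; once this is secured, the remaining identifications, though lengthy, are forced by the energy structure.
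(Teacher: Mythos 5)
Your overall strategy---symmetrize, characterize $\mathrm{Ker}\,\cL$ mode by mode, intersect with the Gauss constraints, then take the mass-weighted sum of the two velocity equations for the momentum law and a difference for the induction law---is essentially the strategy of Section~\ref{SlowLimitModel}. Two of your sub-routes are genuinely different and worth recording. First, you identify $\mathfrak{I}$ by diagonalizing the quadratic energy restricted to the constrained kernel: indeed $\underline{n}\,m_e|v_{e0}|^2+\underline{n}\,m_i|v_{i0}|^2+|B_0|^2=\myunderbar{\rho}\,|u_0|^2+\underline{b}\,|\nabla\times B_0|^2+|B_0|^2$, whose integral is $\int(\myunderbar{\rho}\,|u_0|^2+B_0\cdot B_0^*)$; the cross terms cancel and $\underline{n}\,\underline{b}^2(m_e^{-1}+m_i^{-1})=\underline{b}$, so this is correct and is a cleaner conceptual route to \eqref{importantrelation} than the explicit orthonormal eigenbases $w_k^j$ of Lemma~\ref{lem:CPPe}. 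Second, you derive the induction equation by curling the difference of the velocity equations (the generalized Ohm's law route), whereas the paper reads it off the magnetic block of the projected system and notes at the end of Paragraph~\ref{subs:end} that your combination only yields $\nabla\times\bigl\lbrack\part_t B_0^*+\nabla\times(\cdots)\bigr\rbrack=0$. Your route therefore needs one extra step: the bracketed field is also divergence free and has zero mean on $\TT^3$ (since $\part_t\overline{B}_0=0$ from the $k=0$ mode), hence vanishes. This is easily supplied, but it must be said, and note that $v_{e0}-v_{i0}$ determines $\nabla\times B_0$ rather than $B_0^*$ directly, so the elliptic inversion has to be invoked as well.

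The one genuine gap is your handling of the two projectors. You assert that ``$\PP=\PP_{\!e}$ on the prepared manifold'', which is true of their restrictions to $\cH=\mathrm{Ker}\,\cL\cap\mathrm{Ker}\,\cG$, but what the proof actually requires is that $\PP$ and $\PP_{\!e}$ agree on the quadratic field $A(0,\myunderbar{U},U_0,D_x)U_0+f(0,\myunderbar{U},U_0)$ evaluated at $U_0=\PP_{\!e}U_0$---and that field is \emph{not} in $\mathrm{Ker}\,\cL$, so the agreement is not automatic. Since $\PP_{\!e}$ has strictly smaller range than $\PP$ (it kills the extra directions $w_k^5,w_k^6$ carrying mean-free magnetic, density and longitudinal electric components), your computation as written only establishes the equivalence of the \emph{effective} model \eqref{modu0effective} with XMHD, not of SLM \eqref{modu0prepa} as the theorem asserts. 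The paper closes this with Lemma~\ref{actions} and Corollary~\ref{equiequi}: the nonlinearity has no density and no magnetic components, and its electric component $\cF_E(0,\PP_{\!e}\,\cU_0)$ turns out to be proportional to $\nabla\times B_0$, hence divergence free, so it lies in the subspace $\cK$ of \eqref{cestpourcK} on which the two projectors coincide. You need this verification (in particular $\nabla\cdot\cF_E(0,\PP_{\!e}\,\cU_0)=0$) before you may conclude that your $U_0$ solves SLM and that the polarization $U_0=\PP_{\!e}\,U_0$ propagates.
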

XMHD and SLM equations are therefore in one-to-one correspondance. But they involve distinct representation modes. The corresponding phase spaces $ L^2(\RR^3;\RR^6) $ and $ L^2(\RR^3;\RR^{14}) $ are different, with numbers\footnote{The count of $ 14 $ in $ U_0 \in L^2(\RR^3;\RR^{14}) $ does not take into account the polarization condition $ U_0 = \PP_{\! e} U_0 $.} of unknowns equal to $ 6 $ and $ 14 $. They are connected by the pseudo-differential (or Fourier multiplier) embedding $ \mathfrak{I} $. Observe in particular that the  regularities of $ U_0 $ and $ B^*_0 $  are not the same. The XMHD and SLM interpretations have each its own advantages (and disadvantages):
\begin{itemize}
\item XMHD is easily accessible with its reduced set of unknowns $ (u_0,B_0^*) \in \RR^3 \times \RR^3 $ (instead of $ U_0 \in \RR^{14}$) and its differential formulation. It allows to get rid of the complicated operations induced by the constraint $ U_0 = \PP_{\! e} \, U_0 $. It also gives through $ (u_0 , B^*_0) $ a direct access to the principal features of the flow. The implementation in XMHD of a single velocity $ u_0 $, which as indicated in \eqref{centermasselocity} can be interpreted as the velocity of the center of mass, must not  obscure the underlying two-fluid aspects. The reader must keep in mind that $ u_0 $ strongly differ from $ v_{e0} $ and $ v_{i0} $. Again, one of the roles of  $ \mathfrak{I} $ is to make apparent this distinction.
\smallskip
\item The SLM viewpoint relies on a genuine two-fluid vision since the components of $ U_0 $ can simply be connected with those of $ U_\eps $. Once $ B_0^* \not \equiv 0 $, it is possible to differentiate the electon velocity $ v_{e0} $ from the ion velocity $ v_{i0} $, with generically $ v_{e0} \not = v_{i0} $,  as clearly the injection $\mathfrak I$ shows. The SLM approach is  intrinsically related to EMTF, and it leads  to well-posedness\footnote{The Cauchy problem is by construction well-posed at the level of the (symmetric) SL Model. By contrast, it is
quite delicate to solve at the level of XMHD. This is achieved in \cite{BC1} through adequate pseudo-differential transformations; this is obtained in \cite{BC1} by introducing a potential formulation which mimics some features of SLM (without $ \PP $ but with technicalities induced by the constitutive relation).}. That is also a necessary prelude before including the additional impacts of fast waves.
\end{itemize}

Still in a periodic box but now for  unprepared data, the Fast Limit Model appears as a generalization of SLM (and therefore of XMHD) that allows to better understand the impact of fast waves on the XMHD  slower dynamics. Basically, waves with different speeds may interact by nonlinearity to produce new waves propagating at other velocities.
This is the phenomenon of resonance \cite{Dan02,Ga98,Gre97,Mas01,MS00,MS03,Sch94b}.

The plan of the paper is as follows. In Section \ref{SlowLimitModel}, we detail the concrete content of SLM. In Appendix A, we present  the theoretical developments that are brought into play in order to justify the approximation \eqref{difflim}. In Appendix B, which lies at the interface between physics and mathematics, we provide a comprehensive overview on XMHD.

\section{The Slow Limit Model}
\label{SlowLimitModel}
This section is devoted to the proof of Theorem~\ref{maintheo}, which is divided into four  steps. The first stage (Subsection \ref{subs:reformulation}) is to interpret the EMTF system (written in terms of $ U_\eps $) as a quasilinear symmetric penalized system (on a new state variable $ \cU_{\varepsilon} $) involving a singular part $\varepsilon^{-1}\cL \, \cU_{\varepsilon} $, where   the action of $ \cL $ is  skew-adjoint. Then (Subsection \ref{subs:impact}), we observe that the filtering unitary group method of Schochet \cite{S07} can be applied.

However, adaptations are needed in order to incorporate the supplementary constraints induced by Gauss's laws. This leads to the notion of {\it \underline{e}ffective} orthogonal projector $ \PP_{\! e} $ (distinct from the orthogonal projector $ \PP $ onto the kernel of $ \cL $). The third operation (Subsection \ref{subs:PP}) is to identify the content of $\PP_{\! e} $. The last step (Subsection \ref{subs:end}) consists in exploiting the explicit formula obtained for $\PP_{\! e} $ to show the equivalence between  SLM and  incompressible XMHD.

%%%%%%

\subsection{Reformulation of the EMTF system}
\label{subs:reformulation}
The starting point is the EMTF system in the conservative form  \eqref{TFEM}. Recall that,  without loss of generality, we can take $Z=1$. Since $ p_s' >0$ on $ \RR_+^* $, instead of working with $ {\rm n}_s$, we can implement
\begin{equation*}
  \label{implens}
        {\rm q}_s := g_s({\rm n}_s)\,, \qquad g_s({\rm n}_s):=\frac{1}{\sqrt{m_s}}\int_{\underline{n}}^{{\rm n}_s} \frac{\sqrt{p_s'(\theta)}}{\theta}\ d\theta\,,
        \qquad s\in\{e,i\}\,.
\end{equation*}
This change of density variables leads to the introduction of two scalar functions $ a_s : \RR \rightarrow \RR_+ $ given by
\begin{equation*}
  \label{changedensity}
  \qquad \qquad \ a_s({\rm q}_s) :=g_s^{-1}({\rm q}_s) \, g_s'\circ g_s^{-1}({\rm q}_s)=g_s'({\rm n}_s) \, {\rm n}_s= \sqrt{\frac{p_s'({\rm n}_s)}{m_s}}\,, \qquad s\in\{e,i\}\,,
\end{equation*}
whose values at the image  $ g_s (\underline{n}) = 0 $ of the equilibrium position $ \underline{n} $ are such that
\begin{equation}
  \label{defpaprime}
  \underline{a}_s := a_s(0) =  \sqrt{\frac{\underline{p}_s'}{m_s}}\,, \qquad \underline{p}_s' := p_s'(\underline{n}) , \qquad s\in\{e,i\}\,.
\end{equation}
The interest of using $ {\rm q}_s $ is to give access to some equivalent  symmetric quasilinear version of EMTF, namely
\begin{equation}\label{TFEMS}
  \left\{
  \begin{array}{l}
    \part_{\tau} {\rm q}_e +  {\rm v}_e\cdot \nabla  {\rm q}_e + a_e({\rm q}_e) \nabla \cdot {\rm v}_e = 0 \,, \smallskip \\
    \part_{\tau} {\rm q}_i +  \,{\rm v}_i\cdot \nabla {\rm q}_i \,+ a_i({\rm q}_i) \nabla \cdot {\rm v}_i \,= 0 \,, \smallskip \\
   \part_{\tau} {\rm v}_e +  {\rm v}_e \cdot \nabla {\rm v}_e  + a_e({\rm q}_e) \nabla {\rm q}_e 
    = - ({\rm E} + {\rm v}_e \times {\rm B})/m_e \,, \smallskip \\
    \part_{\tau} {\rm v}_i + \, {\rm v}_i \cdot \nabla {\rm v}_i  \, + a_i({\rm q}_i) \nabla {\rm q}_i 
    \, \, = \ \ ({\rm E} + {\rm v}_i \times {\rm B})/m_i \,, \smallskip \\
    \displaystyle \part_{\tau} {\rm E} \, - \, \nabla \times {\rm B}  =g_e^{-1}({\rm q}_e) {\rm v}_e - g_i^{-1}({\rm q}_i) {\rm v}_i  \,, \smallskip \\
    \part_{\tau}  {\rm B} \, + \, \nabla \times {\rm E} = 0 \,, 
  \end{array}
  \right.
\end{equation}
together with
\begin{equation}
  \label{GausslawS}
  \nabla \cdot {\rm B}=0\,, \qquad\nabla \cdot {\rm E}=g_i^{-1}({\rm q}_i) - g_e^{-1}({\rm q}_e)\,.
\end{equation}
In this new setting, we can resume the procedure of Section~\ref{s:intro}. To avoid the multiplication of notations, without risk of confusion, we keep ${\rm U}$, ${\rm U}_\varepsilon$, $\myunderbar{U}$ and $U_\varepsilon$. The flat equilibrium for \eqref{TFEMS}-\eqref{GausslawS} corresponding to \eqref{neutral} is just $ \myunderbar{U} = 0 $. Let us define the two scalar coefficients
\begin{equation}
  \label{defabbar}
  \underline{b}:= \frac{d_e^2}{\myunderbar{\rho}} = \bigg(\frac{\underline{n}}{m_e} + \frac{\underline{n}}{m_i}\bigg)^{-1} \, , \qquad \underline{c}:=\Big(\sqrt{{\underline{p}_e'}/{\underline{p}_i'}}+ \sqrt{{\underline{p}_i'}/{\underline{p}_e'}}\Big)^{-1}\,.
\end{equation}
Equivalently to what has been done in the introductory  Section~\ref{s:intro}, the objective is to seek solutions ${\rm U}={}^t({\rm q}_e,{\rm q}_i, {}^t {\rm v}_e ,{}^t {\rm v}_i,{}^t {\rm E},{}^t {\rm B}) $ to \eqref{TFEMS}-\eqref{GausslawS} having the form
\begin{equation}
  \label{linearalo0}
        {\rm U}_\varepsilon(\tau,x) = \varepsilon \,  U_\varepsilon(\varepsilon \tau,x) \, , \qquad  U_\varepsilon = {}^t(q_{e \eps},q_{i \eps},{}^t v_{e \eps},{}^t v_{i \eps},{}^t E_\eps,{}^t B_\eps) \, .
\end{equation}
Given $ h \in \mathscr C^\infty (\RR;\RR) $ and a fixed position $ \underline{q} $, we denote by $ \mathscr R (h,\underline{q}): [0,\eps_0] \times \RR \rightarrow \RR $
the smooth function determined by
\[
\mathscr R (h,\underline{q}) (\eps,q) := \frac{1}{\varepsilon}\, \big(h(\underline{q} +\varepsilon q) - h(\underline{q})\big) = \int_0^1 h'(\underline{q} +\varepsilon q \, r) \, q \ dr \,.
\]
When $ \underline{q} = 0 $, we just use $ \myunderbar{\mathscr R} (h) := \mathscr R (h,0) $. Retain
\begin{equation}
  \label{mathscrR}
  \qquad \mathscr R (h,\underline{q}) (\eps,q) = h'(\underline{q}) \, q + \cO(\eps) \, , \qquad \mathscr R (h,\underline{q}) (0,q) = h'(\underline{q}) \, q \, .
\end{equation}
In place of \eqref{TFEMS}, we find
\[
  \left\{
  \begin{array}{l}
    \displaystyle \part_{t} q_{e\varepsilon} +  v_{e\varepsilon}\cdot \nabla  q_{e\varepsilon}
    + \myunderbar{\mathscr R} (a_e) (\eps, q_{e\varepsilon}) \,  \nabla \cdot v_{e\varepsilon}
    + \frac{\underline{a}_e}{\varepsilon} \, \nabla \cdot v_{e\varepsilon} = 0 \,, \medskip \\
    \displaystyle \part_{t} q_{i\varepsilon} \, +  v_{i\varepsilon}\cdot \nabla  q_{i\varepsilon}
    \, + \myunderbar{\mathscr R} (a_i) (\eps, q_{i\varepsilon}) \  \nabla \cdot v_{i\varepsilon} \,
    + \frac{\underline{a}_i}{\varepsilon} \, \nabla \cdot v_{i\varepsilon} \, = 0\,, \medskip \\
    \displaystyle \part_{t} v_{e\varepsilon} +  v_{e\varepsilon} \cdot \nabla v_{e\varepsilon}
    + \myunderbar{\mathscr R} (a_e) (\eps, q_{e\varepsilon}) \,  \nabla q_{e\varepsilon}
    + \frac{1}{\varepsilon}\, \bigg(\underline{a}_e \,  \nabla q_{e\varepsilon} + \frac{E_{\varepsilon}}{m_e}\bigg)= - \frac{v_{e\varepsilon}}{m_e}  \times B_{\varepsilon} \,, \medskip \\
    \displaystyle \part_{t} v_{i\varepsilon} +  v_{i\varepsilon} \cdot \nabla v_{i\varepsilon} \,
    + \myunderbar{\mathscr R} (a_i) (\eps, q_{i\varepsilon}) \ \nabla q_{i\varepsilon} \,
    + \frac{1}{\varepsilon}\, \bigg(\underline{a}_i \,  \nabla q_{i\varepsilon} - \frac{E_{\varepsilon}}{m_i}\bigg)\ = \, +  \frac{v_{i\varepsilon}}{m_i} \times B_{\varepsilon}  \,, \medskip \\
    \displaystyle \part_{t} E_{\varepsilon} +\frac{1}{\varepsilon}\, \big(\underline{n} \, (v_{i\varepsilon}-v_{e\varepsilon}) - \nabla \times B_{\varepsilon} \big) = \myunderbar{\mathscr R} (g_e^{-1}) (\eps, q_{e\varepsilon}) \,  v_{e\varepsilon} - \myunderbar{\mathscr R} (g_i^{-1}) (\eps, q_{i\varepsilon}) \,  v_{i\varepsilon} \,, \medskip \\
     \displaystyle \part_{t}  B_{\varepsilon} + \frac{1}{\varepsilon}\nabla \times E_{\varepsilon}  = 0 \, .
  \end{array}
  \right.
\]
In the linearization procedure \eqref{linearalo0}, the source term of \eqref{TFEMS} contributes with a singular part (with $ 1/\eps $ in factor), which is a linear operator. However, the action of this operator is not skew-adjoint for the standard $L^2$ scalar product. Indeed, the terms $E_{\varepsilon}/m_e$ and  $- E_{\varepsilon}/m_i$ do not match with $\underline{n}(v_{i\varepsilon}-v_{e\varepsilon})$ in a skew-adjoint way. To remedy this, we introduce the change of unknowns
\[
\cU_{\varepsilon}={}^t(\varrho_{e\varepsilon},\varrho_{i\varepsilon},{}^tu_{e\varepsilon},{}^tu_{i\varepsilon},{}^tE_{\varepsilon},{}^tB_{\varepsilon}) := \cA_0^{1/2} \,  U_{\varepsilon} = \cU_0 + \cO(\eps) \,,
\]
with
\begin{equation*}
  \cA_0:=\left( \begin{array}{cccccc}
    \underline{n} \, m_e & 0 & 0 & 0 & 0 & 0 \\
    0 & \underline{n} \, m_i & 0 & 0 & 0 & 0 \\
    0 & 0 & \underline{n}\, m_e \, {\rm I}_{3\times 3} & 0 & 0 & 0 \\
    0 & 0 & 0 & \underline{n} \, m_i \,  {\rm I}_{3\times 3} & 0 & 0 \\
    0 & 0 & 0 & 0 &  {\rm I}_{3\times 3}  & 0 \\
    0 & 0 & 0 & 0 & 0  &  {\rm I}_{3\times 3} 
  \end{array}  \right)\,.  
\end{equation*} 
Retain
\begin{equation}
  \label{passagecUU0}
  \begin{array}{rl}\cU_0 \! \! \! & = \cA_0^{1/2} \,  U_0 = {}^t(\varrho_{e0},\varrho_{i0},{}^tu_{e0},{}^tu_{i0},{}^tE_{0},{}^tB_{0}) \smallskip \\
    \ & = {}^t \Bigl(\sqrt{\underline{p}'_e/\underline n} \ n_{e0},\sqrt{\underline{p}'_i/\underline n} \ n_{i0},\sqrt{\underline n \, m_e} \ {}^t v_{e0}, \sqrt{\underline n \, m_i} \ {}^t v_{i0},{}^t E_{0},{}^t B_{0} \Bigr) \, .  \end{array}
\end{equation}
Let $e^j$ with $ j \in \{1,2,3\} $ be the $j$-th vector of the canonical basis of $ \RR^3 $. Expressed in the  variable $\cU_\varepsilon$,  our  system  becomes
\begin{equation}
  \label{TFEMS3}
  \partial_t \cU_\varepsilon - \frac{1}{\varepsilon} \, \cL \,\cU_\varepsilon =
   \sum_{j=1}^3 \cA_j(\eps,\cU_\eps) \, \part_{x_j} \cU_\eps + \cF(\eps,\cU_\eps)\,.
\end{equation}
We find that $ \displaystyle \cL := \cC_1 \, \part_{x_1} + \cC_2 \, \part_{x_2} + \cC_3 \, \part_{x_3} + \cD $ is a constant matrix-valued differential operator of order one, built with the $ 14 \times 14 $ matrix
  \begin{equation*}
  \cD:=\sqrt{\underline{n}}
    \, \left( \begin{array}{cccccc}
   0 & 0 & 0 & 0 & 0 & 0 \\
   0 &  0 & 0 & 0 & 0 & 0 \\
   0 & 0 & 0 & 0 & -{\rm I}_3/\sqrt{m_e} & 0 \\
   0 & 0 & 0 & 0  & \ \ {\rm I}_3/\sqrt{m_i} & 0 \\
    0 & 0 & {\rm I}_3/\sqrt{m_e} & - {\rm I}_3/\sqrt{m_i} & 0  & 0\\
    0 & 0 & 0 & 0 & 0  & 0
  \end{array}  \right)\,,
\end{equation*}
and with the three $ 14 \times 14 $ matrices
\begin{equation*}
  \cC_j:= - \left( \begin{array}{cccccc}
   0 & 0 & \underline{a}_e \, {}^t e^j & 0 & 0 & 0 \\
   0 &  0 & 0 &  \underline{a}_i \, {}^t e^j & 0 & 0 \\
    \underline{a}_e \, e^j & 0 & 0 & 0 & 0 & 0 \\
   0 & \underline{a}_i \, e^j & 0 & 0  & 0 & 0 \\
    0 & 0 & 0 & 0 & 0  & {}^t \cR_j\\
    0 & 0 & 0 & 0 & \cR_j  & 0
  \end{array}  \right)\,, \qquad j \in \{1,2,3 \} \,,
\end{equation*}
where the $ 3 \times 3 $ matrices $ \cR_j $ are given by
\[
\cR_1=\big(0\, \vert \, e^3 \, \vert \,  -e^2\big) \, , \qquad \cR_2=\big (-e^3 \, \vert \, 0 \, \vert \, e^1 \big ), \qquad \cR_3=\big (e^2 \, \vert \, -e^1  \, \vert \,0\big ) \,.
\]
Moreover, for $ j \in \{1,2,3 \} $, with $ \myunderbar{\mathscr R} (a_s) \equiv \myunderbar{\mathscr R} (a_s)(\eps,q_{s\eps}) $ we can compute
\[
\cA_j(\eps,\cU_\eps) :=-\left( \begin{array}{cccccc}
  \displaystyle \frac{u_{e}^j}{\sqrt{{\underline n} m_e}} & 0 & \myunderbar{\mathscr R} (a_e) \, {}^t e^j & 0 & 0 & 0 \\
  0 &  \displaystyle \frac{u_{i}^j}{\sqrt{{\underline n} m_i}} & 0 & \myunderbar{\mathscr R} (a_i) \, {}^t e^j & 0 & 0 \\
  \myunderbar{\mathscr R} (a_e) \, e^j & 0 &  \displaystyle \frac{u_{e}^j}{\sqrt{{\underline n} m_e}} \, e^j\otimes e^j & 0 & 0 & 0 \\
  0 & \myunderbar{\mathscr R} (a_i) \, e^j & 0 & \displaystyle \frac{u_{i}^j}{\sqrt{{\underline n} m_i}} \, e^j\otimes e^j & 0 & 0 \\
  0 & 0 & 0 & 0 & 0  & 0 \\
  0 & 0 & 0 & 0 & 0  & 0 
\end{array}  \right) .
\]
There remains to specify the components of $ \cF ={}^t (\cF_{\varrho_e},\cF_{\varrho_i},{}^t\cF_{u_e},{}^t\cF_{u_i},{}^t\cF_E,{}^t\cF_B) $.  We have $ \cF_{\varrho_e} = 0 $, $ \cF_{\varrho_i} = 0 $ and $ \cF_B = 0 $, whereas
\begin{equation}
  \label{remainsspecify}
  \qquad \left\{\begin{array}{l}
    \cF_{u_e} (\eps,\cU_\eps) := - \, u_{e\varepsilon}\times B_{\varepsilon}/{m_e} \, , \smallskip \\
    \cF_{u_i} (\eps,\cU_\eps) := \ \ \ \, u_{i\varepsilon}\times B_{\varepsilon}/{m_i} \, , \smallskip \\
    \displaystyle \cF_E (\eps,\cU_\eps) := \, \myunderbar{\mathscr R} (g_e^{-1})\Big(\eps, \frac{ \varrho_{e\varepsilon}}{\sqrt{{\underline n} m_e}} \Big) \ \frac{u_e}{ \sqrt{{\underline n} m_e} } - \myunderbar{\mathscr R} (
    g_i^{-1}) \Big(\eps,\frac{ \varrho_{i\varepsilon}}{\sqrt{{\underline n} m_i}} \Big) \ \frac{u_i}{ \sqrt{{\underline n} m_i} } \,  .
  \end{array}\right.
\end{equation}
In addition, from  \eqref{GausslawS}, we can deduce
\begin{equation}
  \label{GaussLawSS}
\quad \nabla \cdot B_{\varepsilon} =0\,, \qquad  \nabla \cdot E_{\varepsilon} + \myunderbar{\mathscr R} (g_e^{-1}) (\eps, q_{e\varepsilon}) - \myunderbar{\mathscr R} (g_i^{-1}) (\eps, q_{i\varepsilon}) = 0 \,.
\end{equation}
In other words, with $\underline{a}_s$ as in \eqref{defpaprime}, we must impose
\begin{equation}
  \label{defL}
\qquad  \cG \,\cU_\eps = \cO(\eps) , \qquad \cG \equiv \cG(\myunderbar{U}):=
  \left( \begin{array}{cccccc}
   0 & 0 & 0 & 0 & 0 & \nabla \cdot  \\
 \displaystyle \sqrt{\underline{n}/\underline{p}'_e} & \displaystyle - \sqrt{\underline{n}/\underline{p}'_i}  & 0 & 0 & \nabla \cdot  & 0
  \end{array}  \right)\,.
\end{equation}

%%%%%%%%%%%%%%%%%%%%%

\subsection{The impact of Gauss's laws}
\label{subs:impact}
As a consequence of assumption \eqref{Gausssld}, the constraints inside \eqref{GaussLawSS} are  verified at time $ t= 0 $. These conditions are propagated by the nonlinear flow. They are  transparent when performing the asymptotic  analysis of S. Schochet. This implies that the proof of Theorem~4.1 in \cite{S07} can be   repeated as it is. In the first instance, we can skip \eqref{GaussLawSS}. Let $\PP $ be the orthogonal projector onto the kernel of $ \cL $. In the case of prepared data, that is\footnote{The condition $\part_t {U_\varepsilon}_{|_{t=0}}=\mathcal{O}(1)$ is achieved if and only if $ \cL \, \cU_\eps^0 = \cO (\eps) $ that is when $ \cU_\eps^0 = \PP \, \cU_\eps^0 + \cO(\eps)$ or in view of \eqref{asst0} when $ \cU_\varepsilon^0=\PP \, \cU_0^0 + \PP o(1) + \mathcal{O}(\varepsilon) $.} when $ \cU_\varepsilon^0=\PP \, \cU_0^0 + \PP \, o(1) + \mathcal{O}(\varepsilon)$ in $ H^s$, we obtain that
\[
\sup_{\eps \in ]0,\eps_0]} \ \bigl( \parallel \cU_\eps \parallel_{L^\infty([0,T]; H^s)} + \parallel \cU_\eps \parallel_{W^{1,\infty}([0,T]; H^{s-1})} \bigr) < + \infty \,.
\]
Indeed, these energy estimates rely on the symmetry of the system \eqref{TFEMS3} and the fact that the singular linear operator $\varepsilon^{-1} \cL$ does not contribute to energy estimates since it is skew-adjoint and has constant coefficients. This uniform control allows to prove two important results (see Appendix A).

First, as soon as the family $ \{\cU_\varepsilon^0\}_\eps $ is bounded in $ H^s$,  there exist unique classical  solutions $\cU_\varepsilon \in \mathscr{C}([0,T]; H^s)$ to \eqref{TFEMS3} with prepared initial data $\cU_\varepsilon^0 $. Second, using some compactness arguments (namely compact Sobolev embeddings, Ascoli--Arzela theorem and interpolation inequalities),  as $\varepsilon \rightarrow 0$, the solution $\cU_\varepsilon$ converges in $\mathscr{C}([0,T]; H^{s-\delta})$ for every $\delta>0$ to some profile $\cU_0\in \mathscr{C}([0,T]; H^{s})$.
By passing to the limit $\varepsilon \rightarrow 0$ in \eqref{TFEMS3} via the strong compactness of $\cU_\varepsilon$ in $\mathscr{C}([0,T]; H^s)$, this expression $ \cU_0 $ turns out to be the unique solution of
\begin{equation}
  \label{modu0prepa2}
  \qquad  \partial_t (\PP \, \cU_0) =   \sum_{j=1}^3 \PP \cA_j(0,\PP\,\cU_0) \,
  \part_{x_j} \PP\,\cU_0 + \PP \cF(0,\PP\,\cU_0)\, , \qquad \PP \, {\cU_0}_{\mid t= 0} = \PP \, \cU^0_0 \,.
\end{equation}
This formulation of SLM is not really optimal\footnote{As will be seen, some equations inside SLM are redundant or inactive in our context.}. Indeed, it does not take into account the influence of the relation \eqref{GaussLawSS}. As a matter of fact, an asymptotic  consequence of \eqref{defL} is the condition $ \cG \,\cU_0 = 0 $. The profile $ \cU_0 $ must be polarized inside $\cH := {\rm Ker}\, \cL\cap {\rm Ker}\,  \cG $.

In other words, we must impose $\cU_0\in \cH $. From now on, we denote by $ \PP_{\! e} $ (with the index ``$e$'' for \underline{e}ffective) the orthogonal projector onto the subspace $ \cH $. By construction, we already know that $ \cU_0^0 = \PP_{\! e} \, \cU_0^0 $, and we want to guarantee that $ \cU_0 = \PP_{\! e} \, \cU_0 $. Thus, instead of looking directly at SLM, we prefer to consider the Effective Slow Limit Model (ESLM) given by
\begin{equation}
  \label{modu0effective}
 \qquad  \partial_t (\PP_{\! e} \, \cU_0) =   \sum_{j=1}^3 \PP_{\! e} \, \cA_j(0,\PP_{\! e} \, \cU_0) \, \part_{x_j} \PP_{\! e} \, \cU_0 + \PP_{\! e} \, \cF(0,\PP_{\! e} \,\cU_0)\, , \quad \ \PP_{\! e} \, {\cU_0}_{\mid t= 0} = \PP_{\! e} \, \cU^0_0 \,.
\end{equation}
This is again a quasilinear symmetric system and, as such, it is locally well-posed in the space $ H^s $ as soon as $ s > 5/2 $. It is more convenient to implement ESLM than SLM not only because ESLM produces systematically a polarization onto the required subspace $ \cH $, but especially because the reduced image of $ \PP_{\! e} $ (in comparison with the image of $ \PP $) gives access to additional simplifications that are helpful in the computations.
For these reasons, in practice, we will first  compare ESLM with incompressible XMHD. It is only after that we will check that the solution of \eqref{modu0effective} coincides with the one issued from  \eqref{modu0prepa2}.

%%%%%%%%%%%%%%%%%%%%%

\subsection{The orthogonal projectors}
\label{subs:PP}
The aim here is to derive  (relatively simple) formulas for $\PP_{\! e} $ and $\PP $. Consider a periodic function
\[
\cU_0 \in L^2 (\TT^3;\RR^{14}) \, , \qquad \cU_0 (x) = {}^t(\varrho_{e0},\varrho_{i0},{}^t u_{e0},{}^t u_{i0},{}^t E_0 ,{}^t B_{0}) (x) \,,
\]
with Fourier decomposition
\begin{equation*}
\label{Fourierdecomp} \qquad \ \, \cU_0(x) = \sum_{k \in \ZZ^3} \rwidehat{\cU}_{0k} \ e^{i k \cdot x} \, , \qquad \rwidehat{\cU}_{0k} = {}^t(\hat\varrho_{e0k},\hat\varrho_{i0k},{}^t \widehat u_{e0k},{}^t \widehat u_{i0k},{}^t \widehat E_{0k} ,{}^t\widehat B_{0k}) \in \CC^{14} \,.
\end{equation*}
Since $ \cU_0 $ is real valued, we know already that
\[ \rwidehat{\cU}_{0k} = \overline{\rwidehat{\cU}}_{0(-k)} \, , \qquad \forall k \in \ZZ \, .
\]
In particular, we can isolate the mean value of $ \cU_0 $ which is
\begin{equation*}
\label{defmeanop}
\ \ \overline{\cU}_0 := \rwidehat{\cU}_{00} = \frac{1}{(2 \pi)^3} \int_{\TT^3} \cU_0(x) \, dx = {}^t(\bar{\varrho}_{e0},\bar{\varrho}_{i0},{}^t\overline{u}_{e0},{}^t\overline{u}_{i0},{}^t\myoverbar{E}_0 ,{}^t\myoverbar{B}_{0}) \in \RR^{14} \, .
\end{equation*}
In Lemma \ref{lem:CPPe} below and in the rest of the text, we will denote by $ \LL $ the \underline{L}eray projection   onto divergence-free vector fields, that is
\[ \LL \, u := u - \nabla \Delta^{-1} (\nabla \cdot u) \, . \]
Recall also that the coefficients $ \underline{b} $ and $\underline{c} $ have been defined in \eqref{defabbar}.

\begin{lem}[Computation of $\PP_{\! e} $]
  \label{lem:CPPe}
  For all $ \cU_0 \in L^2 (\TT^3;\RR^{14}) $, we find
\begin{equation}
\label{defPU0:2}
\qquad \quad \PP_{\! e}\, \cU_0 =\left(\begin{array}{c}
 \underline{c}\, \sqrt{{\underline{p}_e'}/{\underline{p}_i'}} \ \bar{\varrho}_{e0}+ \underline{c} \ \bar{\varrho}_{i0} \medskip \\
 \underline{c} \ \bar{\varrho}_{e0}+ \underline{c} \,  \sqrt{{\underline{p}_i'}/{\underline{p}_e'}} \  \bar{\varrho}_{i0}\medskip\\
  \displaystyle \frac{\sqrt{m_e}}{m_e+m_i} \, \LL \big(\sqrt{m_e} \, u_{e0} +\sqrt{m_i} \, u_{i0} \big)
  - \frac{\underline{b}}{1-\underline{b} \, \Delta} \,
  \sqrt{\frac{\underline{n}}{m_e}} \, \nabla \times B_0 \smallskip \\
  \displaystyle  \qquad \qquad - \,  \frac{\underline{b}^2}{1-\underline{b} \, \Delta} \ \frac{\underline{n}}{m_e} \ \Delta \LL u_{e0} + \frac{\underline{b}^2}{1-\underline{b} \, \Delta} \  \frac{\underline{n}}{\sqrt{m_e \, m_i}} \ \Delta \LL u_{i0}
  \medskip \\
  \displaystyle \frac{\sqrt{m_i}}{m_e+m_i} \, \LL \big(\sqrt{m_e} \,  u_{e0} +\sqrt{m_i} \, u_{i0} \big)
  + \frac{\underline{b}}{1-\underline{b} \, \Delta} \, \sqrt{\frac{\underline{n}}{m_i}} \, \nabla \times B_0 \smallskip \\
  \displaystyle  \qquad \qquad + \,  \frac{\underline{b}^2}{1-\underline{b} \, \Delta} \
   \frac{\underline{n}}{\sqrt{m_e \, m_i}}
   \ \Delta \LL u_{e0} - \frac{\underline{b}^2}{1-\underline{b} \, \Delta} \ \frac{\underline{n}}{m_i} \ \Delta \LL u_{i0} \medskip \\
  0 \medskip \\
   \displaystyle \frac{1}{1-\underline{b}\, \Delta} \, \bigg( \LL B_0 + \underline{b} \, \nabla \times
  \bigg( \sqrt{\frac{\underline{n}}{m_i}} \, u_{i0}-\sqrt{\frac{\underline{n}}{m_e}}\, u_{e0}\bigg)
  \bigg)
\end{array}\right)\,,
\end{equation}
where the third and fourth lines (as well as the fifth and sixth lines) in this column vector must be put together along the same line (they are separated for presentational reasons).
\end{lem}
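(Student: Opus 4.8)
The plan is to diagonalize everything in the Fourier variable. Since $\cL$ and $\cG$ are first-order operators with constant coefficients, on the Fourier side they act through the matrix symbols $\cL(k)=i\,\cC(k)+\cD$ and $\cG(k)$, where $\cC(k):=k_1\cC_1+k_2\cC_2+k_3\cC_3$ and a direct check on $\cR_1,\cR_2,\cR_3$ gives $\cR(k)v=k\times v$. Moreover $\cL(k)$ is skew-Hermitian, in agreement with the skew-adjointness of $\cL$. Consequently $\cH={\rm Ker}\,\cL\cap{\rm Ker}\,\cG$ splits as the orthogonal Hilbert sum of the finite-dimensional spaces $\cH_k:={\rm Ker}\,\cL(k)\cap{\rm Ker}\,\cG(k)\subset\CC^{14}$, and $\PP_{\! e}$ is exactly the Fourier multiplier whose symbol at frequency $k$ is the orthogonal projection $P_k$ of $\CC^{14}$ onto $\cH_k$. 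It therefore suffices to compute each $P_k$ and to recognize the resulting symbols through the dictionary $ik\leftrightarrow\nabla$, $\ |k|^2\leftrightarrow-\Delta$, $\ i\,k\times\leftrightarrow\nabla\times$, and $\LL\leftrightarrow\mathrm{Id}-k\otimes k/|k|^2$, matching the operators $\LL$, $\nabla\times$, $(1-\underline{b}\,\Delta)^{-1}$ and $\Delta\LL$ in \eqref{defPU0:2}.

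First I would write out the symbol equations. Setting $\rwidehat{\cU}_{0k}={}^t(\hat\varrho_{e0k},\hat\varrho_{i0k},{}^t\widehat u_{e0k},{}^t\widehat u_{i0k},{}^t\widehat E_{0k},{}^t\widehat B_{0k})$, the relations $\cL(k)\rwidehat{\cU}_{0k}=0$ read $k\cdot\widehat u_{e0k}=k\cdot\widehat u_{i0k}=0$ and $k\times\widehat E_{0k}=0$, together with $\widehat E_{0k}$ forced parallel to $k$ by the two velocity rows and the coupling $i(k\times\widehat B_{0k})+\sqrt{\underline{n}/m_e}\,\widehat u_{e0k}-\sqrt{\underline{n}/m_i}\,\widehat u_{i0k}=0$; the relations $\cG(k)\rwidehat{\cU}_{0k}=0$ add $k\cdot\widehat B_{0k}=0$ and one scalar density–charge relation. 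The first payoff is that, for $k\neq0$, inserting the velocity rows into the second Gauss relation produces a strictly positive scalar times $\hat\varrho_{e0k}$, whence $\hat\varrho_{e0k}=\hat\varrho_{i0k}=0$ and $\widehat E_{0k}=0$. This is precisely what yields the identically zero fifth line of \eqref{defPU0:2} and confines the whole density contribution to the mean mode.

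The mean mode $k=0$ is then a finite-dimensional exercise: here $\cL(0)=\cD$ imposes $\widehat E_0=0$ and $\sqrt{m_i}\,\widehat u_{e0}=\sqrt{m_e}\,\widehat u_{i0}$, while Gauss imposes $\sqrt{\underline{n}/\underline{p}_e'}\,\bar\varrho_{e0}=\sqrt{\underline{n}/\underline{p}_i'}\,\bar\varrho_{i0}$ and leaves $\widehat B_0$ free. Orthogonally projecting $\CC^{14}$ onto this subspace gives, in the density block, projection onto the line spanned by $(\sqrt{\underline{p}_e'},\sqrt{\underline{p}_i'})$, which after the identity $\underline{c}=\sqrt{\underline{p}_e'\underline{p}_i'}/(\underline{p}_e'+\underline{p}_i')$ is exactly the first two lines of \eqref{defPU0:2}; and in the velocity block, projection onto the line spanned by $(\sqrt{m_e},\sqrt{m_i})$, which is the $\Delta=0$, $\LL=\mathrm{Id}$ value of lines three–four. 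The heart of the matter is the projection for $k\neq0$: after discarding the vanishing density and electric components, $\cH_k$ is the subspace $V_k$ of triples $(\widehat u_{e0k},\widehat u_{i0k},\widehat B_{0k})\in(k^\perp)^3$ cut out by the single $k^\perp$-valued constraint $C(u_e,u_i,B):=\sqrt{\underline{n}/m_e}\,u_e-\sqrt{\underline{n}/m_i}\,u_i+i\,k\times B$. I would obtain $P_k$ as the componentwise Leray projection onto $(k^\perp)^3$ followed by the normal-equations projector $\mathrm{Id}-C^{\dagger}(CC^{\dagger})^{-1}C$. The decisive simplification is that $CC^{\dagger}$ is scalar: using $k\times(k\times w)=-|k|^2w$ on $k^\perp$ one finds $CC^{\dagger}=\big(\underline{n}/m_e+\underline{n}/m_i+|k|^2\big)\mathrm{Id}=\underline{b}^{-1}(1+\underline{b}|k|^2)\,\mathrm{Id}$, so $(CC^{\dagger})^{-1}$ is the multiplier $\underline{b}\,(1-\underline{b}\,\Delta)^{-1}$. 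Expanding $\mathrm{Id}-C^{\dagger}(CC^{\dagger})^{-1}C$ and translating $i\,k\times$ into $\nabla\times$ and $|k|^2$ into $-\Delta$ reproduces lines three–six of \eqref{defPU0:2}.

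The main obstacle is this final step: keeping the coupled nine-dimensional projection under control and matching the scalar prefactors. Concretely, after expansion one must verify identities such as $1-(\underline{n}/m_e)\,\underline{b}\,(1+\underline{b}|k|^2)^{-1}=\big[m_e+(m_e+m_i)\underline{b}|k|^2\big]\big[(m_e+m_i)(1+\underline{b}|k|^2)\big]^{-1}$, which rests on $\underline{b}\,\underline{n}/m_s=m_{s'}/(m_e+m_i)$ for $\{s,s'\}=\{e,i\}$; together with the analogous cross terms this converts the bare projector into the stated combination of $\frac{\sqrt{m_e}}{m_e+m_i}\LL(\sqrt{m_e}\,u_{e0}+\sqrt{m_i}\,u_{i0})$, the $\nabla\times B_0$ contribution with coefficient $\underline{b}/(1-\underline{b}\,\Delta)$, and the $\Delta\LL$ contributions with coefficient $\underline{b}^2/(1-\underline{b}\,\Delta)$, with the signs dictated by the $\pm c_s$ in $C^{\dagger}$. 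No genuinely new idea is needed beyond the scalarity of $CC^{\dagger}$, but the algebra must be carried out carefully to land exactly on \eqref{defPU0:2}.
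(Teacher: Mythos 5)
Your proposal is correct and its skeleton coincides with the paper's: decompose $\PP_{\! e}$ as a Fourier multiplier, identify $\cH_k = {\rm Ker}\,\cL_k \cap {\rm Ker}\,\cG_k$, show that for $k \neq 0$ the densities and the electric field vanish (the same $3\times3$ argument combining the two momentum rows with Gauss's law), and treat the mean mode $k=0$ as a separate finite-dimensional projection. Where you genuinely diverge is in the computation of the projector onto $\cH_k$ for $k \neq 0$. The paper exhibits an explicit orthonormal basis $\{w_k^1,\dots,w_k^4\}$ of $\cH_k$ — which requires the non-obvious recombination of the $\widetilde W_k^j$ to achieve orthogonality — and then writes $\PP_{\! e k}=\sum_j w_k^j\,{}^t\overline{w}_k^j$. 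You instead present $\cH_k$ (after the componentwise Leray projection, which is legitimate since $P_V = P_V P_W$ whenever $V\subseteq W$) as the kernel of the single surjective constraint $C=(c_e,-c_i,\,{\rm i}k\times)$ and invoke the normal-equations formula $\mathrm{Id}-C^{\dagger}(CC^{\dagger})^{-1}C$; the decisive point, which you correctly identify and which checks out, is that $CC^{\dagger}=\underline{b}^{-1}(1+\underline{b}|k|^2)\,\mathrm{Id}$ is scalar, so the resolvent $(1-\underline{b}\,\Delta)^{-1}$ appears immediately and the remaining algebra (via $\underline{b}\,\underline{n}/m_s=m_{s'}/(m_e+m_i)$) reproduces \eqref{defPU0:2} exactly — I verified the diagonal, cross, and $\nabla\times B_0$ coefficients. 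Your route is more economical and explains structurally where the Bessel-type multiplier comes from; what it does not furnish is the explicit eigenbasis $w_k^j$, which the paper reuses afterwards (to compare $\PP$ with $\PP_{\! e}$ via $w_k^5, w_k^6$ in \eqref{PPversusPPe}, and to define the split $\PP_{\! e}^v+\PP_{\! e}^m$ exploited in Lemmas \ref{compucf000} and \ref{computatt}), so the basis construction would still have to be done elsewhere.
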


The rest of Subsection \ref{subs:PP}, including Paragraph \ref{Para:0} up to  \ref{Para:3} (without  Corollary~\ref{furthersimplification}), is devoted to the proof of Lemma~\ref{lem:CPPe}. Since both $\cL$ and $ \cG $ are linear operators with real  constant coefficients acting on periodic real valued functions, we can decompose any linear map $ {\mathcal M } \in \{ \PP_{\! e}, \cL, \cG \} $ mode by mode on Fourier series according to the following definition (of the complex valued matrices $ {\mathcal M}_k $)
\[ 
{\mathcal M} \,\cU_0 (x) = \sum_{k \in \ZZ^3} {\mathcal M}_k \, \rwidehat{\cU}_{0k} \ e^{i k \cdot x} \, , \qquad {\mathcal M}_k \, \rwidehat{\cU}_{0k} = \myoverbarM{\mathcal M}_{-k} \, \overline{\rwidehat{\cU}}_{0(-k)}\,.
\]
From the construction of $ \cL $, we obtain
\begin{equation*}
  {\cL}_k = \left( \begin{array}{cccccc}
   0 & 0 & - {\rm i} \,  \underline{a}_e \, {}^tk & 0 & 0 & 0 \\
   0 &  0 & 0 &   - {\rm i} \, \underline{a}_i \, {}^tk & 0 & 0 \\
     - {\rm i}\, \underline{a}_e \, k & 0 & 0 & 0 &  \displaystyle-\sqrt{\frac{\underline{n}}{m_e}}\,{\rm I}_3 & 0 \\
   0 &   - {\rm i} \,  \underline{a}_i \, k & 0 & 0  &  \displaystyle\ \ \, \sqrt{\frac{\underline{n}}{m_i}}\,{\rm I}_3 & 0 \\
    0 & 0 & \displaystyle \sqrt{\frac{\underline{n}}{m_e}}\,{\rm I}_3 &  \displaystyle- \sqrt{\frac{\underline{n}}{m_i}}\,{\rm I}_3 & 0  &  {\rm i}\,k\times\\
    0 & 0 & 0 & 0 &  -{\rm i}\, k\times & 0 
  \end{array}  \right) .
\end{equation*}
From \eqref{defL}, we also obtain
\begin{equation*}
  {\cG}_{k}:=
  \left( \begin{array}{cccccc}
    0 \, & 0 & 0 & 0 & 0 &  {\rm i} \,{}^tk  \\
    \displaystyle \sqrt{\underline{n}/\underline{p}'_e} \, &  \displaystyle -\sqrt{\underline{n}/\underline{p}'_i}  & 0 & 0 &  {\rm i}\, {}^tk  & 0
  \end{array}  \right).
\end{equation*}
The access to $ \PP_{\! ek} $ is driven by the determination of the subspace $ {\cH}_{k} := {\rm Ker}\, {\cL}_k \cap {\rm Ker}\, {\cG}_k $. Below, for all $ k \in \ZZ^3 $, we exhibit an  orthonormal basis of $ {\cH}_{k} $. We first consider the case of $\cH_0$, and then separately the $\cH_k$ with  $k\neq 0$.

%%%%%%%%%%%%%%

\subsubsection{Subspace ${\cH}_{0}$}\label{Para:0}
Consider a vector
\[
\rwidehat{\cU}_{00} = \overline{\rwidehat{\cU}}_{00} = \overline{\cU}_0 ={}^t(\bar{\varrho}_{e0},\bar{\varrho}_{i0},{}^t \overline{u}_{e0},{}^t \overline{u}_{i0},{}^t \myoverbar{E}_0,{}^t \myoverbar{B}_0) \in \RR^{14} \, ,
\]
which is a solution of the two equations $ {\cL}_{0} \, \overline{\cU}_0 = 0 $ and $ {\cG}_{0} \, \overline{\cU}_0 =0$. We find $ \myoverbar{E}_0 = 0 $. Moreover, there must be three constants $\bar{\varrho} \in \RR $, $\overline{u} \in\RR^3$  and   $\myoverbar{B} \in\RR^3$ such that
\begin{equation}
  \label{threeconstants}
\quad \bar{\varrho}_{e0}=\sqrt{{\underline{p}_e'}/{\underline{p}_i'}}\, \bar{\varrho} \,, \quad \
\bar{\varrho}_{i0}=\bar{\varrho}\,,\quad \
\overline{u}_{e0} = \sqrt{m_e/m_i}\, \overline{u}\,, \quad \   \overline{u}_{i0}=\overline{u}\,, \quad \ \myoverbar{B}_0 = \myoverbar{B}\,.
\end{equation}
Therefore, the subspace  $ {\cH}_{0} ={\rm vect} \, \{ w_0^j\}_{j\in\{0,\ldots,6\}} $ is of dimension $ 7 $ (on $ \RR $), generated by
\[
\begin{array}{ll}
  \displaystyle {}^t w_0^0 = \big(1+{\underline{p}_e'}/{\underline{p}_i'}\big)^{-\frac{1}{2}} \ {}^t\Big(\sqrt{{\underline{p}_e'}/{\underline{p}_i'}},1,0,0,0,0\Big) \,, & \smallskip \\
  \displaystyle  {}^tw_0^j = \frac{\sqrt{m_i}}{\sqrt{m_e+m_i}}\, {}^t\Big(0,0,\sqrt{m_e/m_i}\, {}^t e^j,{}^t e^j,0,0\Big)\,, \quad & \forall j \in \{1,2,3\} \, , \smallskip \\
                 {}^tw_0^j = {}^t(0,0,0,0,{}^t e^{j-3})\,, & \forall j \in \{4,5,6\} \, .
\end{array}
\]
From there, it is easy to compute $\PP_{\! e 0} :\RR^{14} \rightarrow \RR^{14}$, which is
\begin{equation*}
  \PP_{\! e 0}=\sum_{j=0}^6 w_0^j \,{}^t w_0^j =
  \left( \begin{array}{cccccc}
    \underline{c}\sqrt{{\underline{p}_e'}/{\underline{p}_i'}} & \underline{c} & 0 & 0 & 0 & 0 \\
    \underline{c} &   \underline{c}\sqrt{{\underline{p}_i'}/{\underline{p}_e'}} & 0 & 0 & 0 & 0 \\
    0 & 0 & \frac{m_e}{m_e+m_i}\, {\rm I}_3  &  \frac{\sqrt{m_e m_i}}{m_e+m_i}\, {\rm I}_3  & 0 & 0 \medskip\\
    0 & 0 &  \frac{\sqrt{m_e m_i}}{m_e+m_i}\, {\rm I}_3  &   \frac{ m_i}{m_e+m_i}\, {\rm I}_3  & 0& 0 \\
    0 & 0 & 0 & 0 & 0  &  0\\
    0 & 0 & 0 & 0 & 0 & {\rm I}_3   
  \end{array}  \right) \,.
\end{equation*}  
Thus, we can assert that
\begin{equation}
  \label{eqn:ctepart}
  \PP_{\! e 0}\, \overline{\cU}_0 =\left(\begin{array}{c}
    \underline{c}\sqrt{{\underline{p}_e'}/{\underline{p}_i'}} \ \bar{\varrho}_{e0}+ \underline{c} \ \bar{\varrho}_{i0} \\
    \underline{c} \ \bar{\varrho}_{e0}+ \underline{c} \sqrt{{\underline{p}_i'}/{\underline{p}_e'}} \  \bar{\varrho}_{i0}\medskip\\
    \displaystyle \frac{\sqrt{m_e}}{m_e+m_i} \big(\sqrt{m_e} \, \overline{u}_{e0} +\sqrt{m_i} \, \overline{u}_{i0} \big)\medskip \\
    \displaystyle \frac{\sqrt{m_i}}{m_e+m_i} \big(\sqrt{m_e} \, \overline{u}_{e0} +\sqrt{m_i} \, \overline{u}_{i0} \big) \\
    0\\
    \displaystyle \myoverbar{B}_0
  \end{array}\right) \,.
\end{equation}
To have access to $ \PP_0 $, it suffices to drop the condition on $ (\bar{\varrho}_{e0},\bar{\varrho}_{i0}) $ induced by $ \cG_0 $. Therefore
\begin{equation}
  \label{eqn:ctepartdrop}
  \PP_{0}\, \overline{\cU}_0 =\left(\begin{array}{c}
    \bar{\varrho}_{e0} \\
  \bar{\varrho}_{i0}\medskip\\
  \displaystyle \frac{\sqrt{m_e}}{m_e+m_i} \big(\sqrt{m_e} \, \overline{u}_{e0} +\sqrt{m_i} \, \overline{u}_{i0} \big)\medskip \\
  \displaystyle \frac{\sqrt{m_i}}{m_e+m_i} \big(\sqrt{m_e} \, \overline{u}_{e0} +\sqrt{m_i} \, \overline{u}_{i0} \big) \\
  0\\
  \displaystyle \myoverbar{B}_0
  \end{array}\right)\,.
\end{equation}

%%%%%%%%%%%%%

\subsubsection{Subspaces ${\cH}_{k}$ with $k\neq 0$}
\label{Para:1}
Let $ \rwidehat{\cU}_{0k} = \cU={}^t(\varrho_e,\varrho_i,{}^tu_e,{}^tu_i,{}^tE,{}^tB)$ be a solution\footnote{For convenience, we drop in Paragraph \ref{Para:1} the hat  ``$ \widehat{\ \, \ } $'' and the subscript ``$ 0k $'' in $ \widehat{\cU}_{0k} $.} of both equations $ {\cL}_{k} \,\cU =0$ and $ {\cG}_{k} \,\cU =0$. Concretely, the vector $\cU \in \CC^{14} $ must satisfy  the following set of equations
\[\left\{
\begin{array}{ll}
  \displaystyle {\rm i}\, k\cdot u_e =0\,, \quad &  {\rm i}\, k\cdot u_i =0\,, \smallskip \\
  \displaystyle {\rm i}\, k \,  \sqrt{\frac{\underline{p}_e'}{\underline{n}}} \,\varrho_e + E=0\,, \quad & \displaystyle {\rm i}\, k \,  \sqrt{\frac{\underline{p}_i'}{\underline{n}}} \ \varrho_i - E=0\,, \\ \displaystyle {\rm i}\, k\cdot E=  \sqrt{\frac{\underline{n}}{\underline{p}_i'}} \ \varrho_i -\sqrt{\frac{\underline{n}}{\underline{p}_e'}} \ \varrho_e\,, \quad & {\rm i}\, k\times E =0\,, \\ \displaystyle {\rm i}\, k\cdot B =0\,, \quad & \displaystyle {\rm i}\, k\times B +  \sqrt{\frac{\underline{n}}{m_e}}\,u_e -  \sqrt{\frac{\underline{n}}{m_i}}\,u_i = 0 \,.
\end{array}\right.
\]
From the second line, we can extract
\[
  {\rm i}\, \vert k \vert^2 \,  \sqrt{\frac{\underline{p}_e'}{\underline{n}}} \,\varrho_e + k \cdot E =0\,, \qquad  {\rm i}\, \vert k \vert^2 \,  \sqrt{\frac{\underline{p}_i'}{\underline{n}}} \ \varrho_i - k \cdot E =0\,, \qquad {\rm i}\, k\cdot E=  \sqrt{\frac{\underline{n}}{\underline{p}_i'}} \ \varrho_i -\sqrt{\frac{\underline{n}}{\underline{p}_e'}} \ \varrho_e\, .
\]
There is only a trivial  solution for this system of three equations with three unknowns, given by $ \varrho_e =0 $, $ \varrho_i =0 $ and $ k \cdot E =0 $. Since $ k\times E =0 $, we must have $ E = 0 $.

To manage the remaining conditions, we  introduce a right handed orthonormal  basis\footnote{This basis  should not be confused with the canonical basis $ (e^1,e^2,e^3) $ of $ \RR^3 $.} $ \bigl( \mathbbm{e}^1(k),\mathbbm{e}^2(k), \mathbbm{e}^3(k) \bigr) $  adjusted in such a way that $\mathbbm{e}^1(k):=k/|k|$ and $ \mathbbm{e}^1(k) \times \mathbbm{e}^2(k) = \mathbbm{e}^3(k) $.  The previous system leads to
\[
\quad B=\frac{{\rm i}}{|k|}\,\mathbbm{e}^1\times \bigg(\sqrt{\frac{\underline{n}}{m_i}}\,u_i -  \sqrt{\frac{\underline{n}}{m_e}}\,u_e\bigg)\,, \qquad
u_s = \alpha_s(k) \,\mathbbm{e}^2(k) +\beta_s(k) \, \mathbbm{e}^3(k)\,, \quad  s\in\{e,i \} \, ,
\]
where the scalar coefficients $\alpha_s$ and $\beta_s$ with $ s\in\{e,i \} $ are four  arbitrary complex numbers. The subspace ${\cH}_{k} ={\rm vect}\, \{ \widetilde{W}_k^j\}_{j\in\{1,\ldots,4\}}$ is therefore of dimension $ 4 $ (on $ \CC $), generated by
\[
\begin{array}{ll}
  {}^t\widetilde{W}_k^1 := {}^t\big(0,0,{}^t\mathbbm{e}^2,0,0,-{\rm i}|k|^{-1}\sqrt{\underline{n}/m_e}\,{}^t\mathbbm{e}^3\big)\,, \quad &
  {}^t\widetilde{W}_k^2 :=  {}^t\big(0,0,{}^t\mathbbm{e}^3,0,0,\ \, {\rm i}|k|^{-1}\sqrt{\underline{n}/m_e}\, {}^t\mathbbm{e}^2\big)\,, \medskip \\
  {}^t\widetilde{W}_k^3 := {}^t\big(0,0,0,{}^t\mathbbm{e}^2,0, \ \, {\rm i}|k|^{-1}\sqrt{\underline{n}/m_i}\, {}^t\mathbbm{e}^3\big)\,, &
  {}^t\widetilde{W}_k^4 :=  {}^t\big(0,0,0, {}^t\mathbbm{e}^3,0,-{\rm i}|k|^{-1}\sqrt{\underline{n}/m_i}\, {}^t\mathbbm{e}^2\big)\,.
\end{array}
\]
In order to obtain an orthogonal basis of ${\cH}_{k} $ and also to partially separate velocities from the magnetic field\footnote{The vectors $ {W}_k^1 $ and $ {W}_k^2 $ are polarized onto the two-fluid velocities.}, we prefer to choose the following eigenvectors of $ \cL_k $, obtained by combining adequately the $ \widetilde{W}_k^j $, namely
\begin{eqnarray*}
  &{W}_k^1 :=\sqrt{m_e}\,\widetilde{W}_k^1+\sqrt{m_i}\,\widetilde{W}_k^3\,, \quad &{W}_k^2 :=\sqrt{m_e}\,\widetilde{W}_k^2+\sqrt{m_i}\,\widetilde{W}_k^4\,, \smallskip \\
 &\qquad \  {W}_k^3 :=\sqrt{\underline{n}/m_e}\,\widetilde{W}_k^1-\sqrt{\underline{n}/m_i}\,\widetilde{W}_k^3\,, \quad &{W}_k^4 :=\sqrt{\underline{n}/m_e}\,\widetilde{W}_k^2-\sqrt{\underline{n}/m_i}\,\widetilde{W}_k^4\,,
\end{eqnarray*}  
which, once normalized, give rise to
\begin{eqnarray*}
{}^tw_k^1&:=&    \big(m_e+m_i \big)^{-\frac{1}{2}} \, {}^t\Big(0,0,\sqrt{m_e}\,{}^t\mathbbm{e}^2,\sqrt{m_i}\,{}^t\mathbbm{e}^2,0,0\Big)\,,\\
  {}^tw_k^2&:=& \big(m_e+m_i \big)^{-\frac{1}{2}} \,{}^t\Big(0,0,\sqrt{m_e}\,{}^t\mathbbm{e}^3,\sqrt{m_i}\,{}^t\mathbbm{e}^3,0,0\Big)\,,\\
  {}^tw_k^3&:=& \Big(\underline{b}^{-1}+\underline{b}^{-2}|k|^{-2} \Big)^{-\frac{1}{2}} \,{}^t\Big(0,0,\sqrt{\underline{n}/m_e}\,{}^t\mathbbm{e}^2,-\sqrt{\underline{n}/m_i}\,{}^t\mathbbm{e}^2, 0,- {\rm i}(\underline{b}|k|)^{-1}\,{}^t\mathbbm{e}^3\Big)\,,\\
  {}^tw_k^4&:=& \Big(\underline{b}^{-1}+\underline{b}^{-2}|k|^{-2} \Big)^{-\frac{1}{2}} \,{}^t\Big(0,0,\sqrt{\underline{n}/m_e}\,{}^t\mathbbm{e}^3,-\sqrt{\underline{n}/m_i}\,{}^t\mathbbm{e}^3, 0,\ \ {\rm i}(\underline{b}|k|)^{-1}\,{}^t\mathbbm{e}^2\Big)\,.
\end{eqnarray*}
Observe that the two eigenvectors $ w_k^3 $ and $ w_k^4 $ are complex valued. We denote by $ \overline{w}_k^j $ the complex conjugate of $ w_k^j $. The orthogonal projector $\PP_{\! ek}:\CC^{14} \rightarrow \CC^{14}$ onto the subspace ${\cH}_{k}$ is then given by
\begin{equation}\label{orthoprojek}
\PP_{\! e k}=\sum_{j=1}^4 w_k^j \,{}^t \overline{w}_k^j\,.
\end{equation}

%%%%%%%%%%%%%

\subsubsection{Subspaces $\text{\rm Ker} \, {\cL}_{k}$ with $k\neq 0$}
\label{Para:2}
Of course, we have $ \cH_k \subset \text{\rm Ker} \, {\cL}_{k} $. The subspace $ \text{\rm Ker} \, {\cL}_{k} $ is accessed from $ \cH_k $ by removing the two linearly independent conditions issued from $ \cG_k $. Thus, to recover $ \text{\rm Ker} \, {\cL}_{k} $, it suffices to complete the basis of $ \cH_k $ with two well-chosen vectors. We can select
\[
\begin{array}{ll}
  {}^tw_k^5 := {}^t (0,0,0,0,0,{}^t\mathbbm{e}^1) \, , \smallskip \\
  \displaystyle {}^tw_k^6 := \big(\underline{n} \,  \underline{p}'_e + \underline{n} \,  \underline{p}'_i + \vert k \vert^2 \,  \underline{p}'_e \, \underline{p}'_i \bigr)^{-\frac{1}{2}} \ {}^t \Big(-\sqrt{\underline{n} \, \underline{p}'_i} \, , \, \sqrt{\underline{n} \, \underline{p}'_e} \, , \, 0 \, , \, 0 \, , \, i \, \vert k \vert \, \sqrt{\underline{p}'_e \, \underline{p}'_i} \ {}^t\mathbbm{e}^1 \, ,\, 0 \, \Big) \, .
\end{array}
\]
It is clear that the six  vectors $ w_k^j $ with $ j \in\{1,\cdots,6 \} $ generate an orthonormal basis of $ \text{\rm Ker} \, {\cL}_{k} $. It follows that
\begin{equation}
  \label{PPversusPPe}
  \PP_{k}= \PP_{\! e k}+  w_k^5 \,{}^t \overline{w}_k^5 + w_k^6 \,{}^t \overline{w}_k^6 \,.
\end{equation}
Define
\begin{equation}
  \label{cestpourcK}
  \cK := \bigl \{ \, \cU_0 \in L^2(\TT^3;\RR^{14}) \, ; \, \cU_0 = {}^t (0,0,u_{e0},u_{i0},E_0,0) \ \text{and} \ \nabla \cdot E_0 = 0 \, \} \, .
\end{equation}
\begin{lem}[The actions of the projectors $ \PP $ and $ \PP_{\! e} $ coincide on $ \cK $]
  \label{actions}
  Assume that $ \cU_0 \in \cK $ with $ \cK $ as in \eqref{cestpourcK}. Then, we have $ \PP \, \cU_0 = \PP_{\! e} \, \cU_0 $.
\end{lem}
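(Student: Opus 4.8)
The plan is to work mode by mode in Fourier space, exploiting the explicit comparison \eqref{PPversusPPe} between $\PP_{k}$ and $\PP_{\! e k}$ that is already at our disposal. Writing $\cU_0 = \sum_{k} \rwidehat{\cU}_{0k}\, e^{i k\cdot x}$, it suffices to show that $\PP_{k}\, \rwidehat{\cU}_{0k} = \PP_{\! e k}\, \rwidehat{\cU}_{0k}$ for every $k \in \ZZ^3$ whenever $\cU_0 \in \cK$, and then to sum over $k$. I would treat the mode $k=0$ and the modes $k \neq 0$ separately, since they are governed by different formulas.

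For $k \neq 0$, the relation \eqref{PPversusPPe} gives $\PP_{k} - \PP_{\! e k} = w_k^5 \,{}^t\overline{w}_k^5 + w_k^6 \,{}^t\overline{w}_k^6$, so the entire task reduces to checking that the two scalars ${}^t\overline{w}_k^5\, \rwidehat{\cU}_{0k}$ and ${}^t\overline{w}_k^6\, \rwidehat{\cU}_{0k}$ vanish. Recalling the definition \eqref{cestpourcK} of $\cK$, a Fourier coefficient of an element $\cU_0 \in \cK$ has the form $\rwidehat{\cU}_{0k} = {}^t(0,0,{}^t\widehat u_{e0k},{}^t\widehat u_{i0k},{}^t\widehat E_{0k},0)$ with, moreover, $k \cdot \widehat E_{0k} = 0$ coming from $\nabla \cdot E_0 = 0$. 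Since $w_k^5 = {}^t(0,0,0,0,0,{}^t\mathbbm{e}^1)$ is supported only on the magnetic block while $\rwidehat{\cU}_{0k}$ has vanishing magnetic component, the first scalar is zero. As for $w_k^6$, its nonzero entries lie only in the two density slots and in the electric slot, where it is proportional to $\mathbbm{e}^1(k) = k/|k|$; contracting against $\rwidehat{\cU}_{0k}$, the density contributions drop because the densities of $\cU_0$ vanish, and the electric contribution is proportional to $\mathbbm{e}^1(k)\cdot \widehat E_{0k} = |k|^{-1}\, k\cdot \widehat E_{0k} = 0$. Hence both scalars vanish and $\PP_{k}\, \rwidehat{\cU}_{0k} = \PP_{\! e k}\, \rwidehat{\cU}_{0k}$.

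For $k=0$, I would instead compare directly the explicit formulas \eqref{eqn:ctepart} and \eqref{eqn:ctepartdrop} for $\PP_{\! e 0}$ and $\PP_0$. These two outputs differ only in the first two (density) components, while the velocity, electric and magnetic components coincide. For $\cU_0 \in \cK$ the mean densities satisfy $\bar{\varrho}_{e0} = \bar{\varrho}_{i0} = 0$, so the density block of both projectors returns $0$, and the two formulas agree on the zero mode as well. Combining the two cases and resumming the Fourier series yields $\PP\,\cU_0 = \PP_{\! e}\,\cU_0$, as claimed.

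I expect no genuine analytic difficulty here; the only point requiring care is the bookkeeping of which components of $w_k^5$ and $w_k^6$ are nonzero. Once that is in place, the defining constraints of $\cK$ — vanishing densities, vanishing magnetic field, and a divergence-free electric field — are seen to be exactly the orthogonality conditions against $w_k^5$ and $w_k^6$. In other words, the entire content of the lemma is the observation that $\cK$ is annihilated, mode by mode, by the rank-two difference $\PP_{k} - \PP_{\! e k}$.
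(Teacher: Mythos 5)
Your proof is correct and follows essentially the same route as the paper: the zero mode is handled by comparing \eqref{eqn:ctepart} with \eqref{eqn:ctepartdrop} using $\bar{\varrho}_{e0}=\bar{\varrho}_{i0}=0$, and the modes $k\neq 0$ by checking that $\rwidehat{\cU}_{0k}$ is orthogonal to $w_k^5$ and $w_k^6$ in \eqref{PPversusPPe}, exactly as in the paper's argument.
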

\begin{proof} Select $ \cU_0 \in \cK $. In particular, we find $ \bar{\varrho}_{e0} = \bar{\varrho}_{i0} =0 $.
From there, comparing \eqref{eqn:ctepart} and \eqref{eqn:ctepartdrop}, we see that $ \PP_0 \, \overline{\cU}_0 = \PP_{\! e 0} \, \overline{\cU}_0 $. Secondly, for $ k \not = 0 $, remark that $ {}^t \overline{w}_k^5 \, \rwidehat{\cU}_{0k} = 0 $, simply because $ \widehat{B}_{0k} = 0 $. Moreover, we can assert that $ {}^t \overline{w}_k^6 \, \rwidehat{\cU}_{0k} = 0 $ because $ \widehat{\varrho}_{i0k} = \widehat{\varrho}_{e0k} = 0 $ and $ \vert k \vert \, \mathbbm{e}^1 \cdot \widehat{E}_{0k} = k \cdot \widehat{E}_{0k} = 0 $ (since $ \nabla \cdot E_0 = 0 $). Thus, in view of \eqref{PPversusPPe}, we are sure that $ \PP_{k} \, \rwidehat{\cU}_{0k} = \PP_{\! e k} \, \rwidehat{\cU}_{0k} $.
\end{proof}

%%%%%%%%%%%%%%%

\subsubsection{Reconstitution of $ \PP_{\! e} $}
\label{Para:3}
By construction, we find
\[
\PP_{\! e} \, \cU_0 = \PP_{\! e 0} \, \rwidehat{\cU}_{00} + \PP^v_{\! e} \, \cU_0 + \PP^m_{\! e} \, \cU_0 \, ,
\]
where the projectors $ \PP^v_{\! e} $ and $ \PP^m_{\! e} $ serve to distinguish respectively \underline{v}elocity and (remaining) \underline{m}agnetic parts inside $ \PP_{\! e} \, \cU_0 - \PP_{\! e 0} \, \rwidehat{\cU}_{00} $. More precisely (with $\ZZ_*^3\equiv \ZZ^3\setminus  \{ 0 \}$),
\[
\PP_{\! e}^v \, \cU_0 := \sum_{k\in \ZZ_*^3} e^{{\rm i}k\cdot x} \, \sum_{j=1,2} ({}^t w_k^j \, \rwidehat{\cU}_{0k}) \ w_k^j  \, , \qquad
\PP_{\! e}^m \, \cU_0 := \sum_{k\in \ZZ_*^3} e^{{\rm i}k\cdot x} \, \sum_{j=3,4} ({}^t \overline{w}_k^j \,  \rwidehat{\cU}_{0k}) \ w_k^j \,.
\]
A straightforward calculation indicates that
\begin{equation}
  \label{eqn:T1}
  \PP_{\! e}^v \, \cU_0 = \sum_{k\in \ZZ_*^3} e^{{\rm i}k\cdot x} \ \frac{1}{m_e+m_i}
  \left( \begin{array}{l}
    0 \\ 0 \\ \LL_k \, \big(m_e \, \widehat{u}_{e0k}+\sqrt{m_e m_i}\,\widehat{u}_{i0k}\big) \smallskip\\
    \LL_k \,  \big(\sqrt{m_e m_i}\,\widehat{u}_{e0k}+m_i \, \widehat{u}_{i0k}\big)\\ 0 \\ 0
  \end{array}  \right) \,, \end{equation}
\vskip -5mm
where
\[
\LL_k:=\sum_{j=2,3}\mathbbm{e}^j(k)\otimes\mathbbm{e}^j(k)={\rm Id}-\mathbbm{e}^1(k)\otimes\mathbbm{e}^1(k) \, , \qquad \forall k \in \ZZ_\ast^3 \, .
\]
Remark that, with the convention $ \LL_0 := {\rm Id} $, the Fourier multiplier built with the action of $ \LL_k $ on the $k$-th Fourier coefficient corresponds to the Leray projector $ \LL $ onto divergence-free vector fields. Observe also that Amp\`ere's law
\begin{equation}
  \label{eqn:ampere}
  {\rm i}\, k\times \widehat{B}_{0k} = \widehat{J}_{0k}\,, \qquad
 \widehat{J}_{0k}:= \sqrt{\frac{\underline{n}}{m_i}}\,\widehat{u}_{i0k} -\sqrt{\frac{\underline{n}}{m_e}}\,\widehat{u}_{e0k}\, ,
\end{equation}
which enters in the definition of the space $\cH_k$ is, as expected, satisfied by $ \PP_{\! e k}^v \, \rwidehat{\cU}_{0k} $. This is an opportunity to introduce the field $ J_0 $ which is built with the Fourier coefficients $\widehat{J}_{0k} $ and which, in the context of \eqref{TFEMS3}, can be viewed as a current density.

We continue with $ \PP_{\! e}^m $. Using the definition of $w_k^3$ and  $w_k^4$, we obtain
\[
\begin{array}{l}
{}^t \overline{w}_k^3 \, \rwidehat{\cU}_{0k} =
\displaystyle \ \ \frac{\underline{b}|k|}{\sqrt{(1+\underline{b}|k|^2)}} \  \Big\{{\rm i}(\underline{b}|k|)^{-1} \widehat{B}_{0k} \cdot \mathbbm{e}^3(k) - \widehat{J}_{0k} \cdot \mathbbm{e}^2(k)\Big\}\,, \smallskip \\
              {}^t \overline{w}_k^4 \, \rwidehat{\cU}_{0k} = \displaystyle-\frac{\underline{b}|k|}{\sqrt{(1+\underline{b}|k|^2)}} \ \Big\{{\rm i}(\underline{b}|k|)^{-1} \widehat{B}_{0k} \cdot \mathbbm{e}^2(k)  + \widehat{J}_{0k} \cdot \mathbbm{e}^3(k)\Big\}\, .
\end{array}
\]
It follows that
\begin{equation}
  \label{eqn:T2}
  \PP_{\! e}^m \, \cU_0  = \sum_{k\in \ZZ_*^3} \frac{e^{{\rm i}k\cdot x}}{1+\underline{b}|k|^2}
  \left( \begin{array}{c}
    0 \\ 0 \\  \displaystyle - \sqrt{\underline{n}/m_e} \ \Bigl( {\rm i} \,   \underline{b} \ k \times \widehat{B}_{0k} + \underline{b}^2 \, \vert k \vert^2 \ \LL_k \widehat{J}_{0k} \Bigr)  \medskip\\
    \displaystyle \ \ \sqrt{\underline{n}/m_i} \ \Bigl( {\rm i} \,   \underline{b} \ k \times \widehat{B}_{0k} + \underline{b}^2 \, \vert k \vert^2 \ \LL_k \widehat{J}_{0k} \Bigr) \\ 0 \\
    \displaystyle  \LL_k \widehat{B}_{0k} + {\rm i}\underline{b}\, k\times \widehat{J}_{0k}\\
  \end{array} \right)\,.
\end{equation}
Gathering \eqref{eqn:ctepart}, \eqref{eqn:T1} and \eqref{eqn:T2}, we obtain \eqref{defPU0:2}. It is straightforward to verify as expected that $\PP_{\! e}=\PP^\ast_{\! e} $, and that $ \PP_{\! e} \, \cU_0 $ is indeed in the kernel of $ \cL $, i.e., $\PP \PP_{\! e} = \PP_{\! e}$. Since $ \PP_{\! e} $ is constructed from \eqref{orthoprojek}, it is sure\footnote{It is somewhat  cumbersome to verify directly that $\PP_{\! e}^2=\PP_{\! e}$.} that $\PP_{\! e}^2=\PP_{\! e}$. This completes the proof of Lemma~\ref{lem:CPPe}.

The formula \eqref{defPU0:2} holds for all $ \cU_0 \in L^2 (\TT^3;\RR^{14} ) $. But in practice, when dealing with \eqref{modu0effective}, we manipulate expressions $ \PP_{\! e} \, \cU_0 $ which are adjusted in such a way that $ \cU_0 = \PP_{\! e} \, \cU_0 $. It is  advantageous to exploit this {a priori} information.

\begin{cor}[Further simplification of $ \PP_{\! e} $]
  \label{furthersimplification}
  Assume that $ \cU_0 = \PP_{\! e} \, \cU_0 $. Then, we have
  \begin{equation}
    \label{defPU0:2simpli}
    \PP_{\! e} \, \cU_0 =\left(\begin{array}{c}
      \bar{\varrho}_{e0} \medskip \\ \bar{\varrho}_{i0}\medskip\\
      \displaystyle \frac{\sqrt{m_e}}{m_e+m_i} \LL \big(\sqrt{m_e} u_{e0} +\sqrt{m_i} u_{i0} \big)
      -   \sqrt{\frac{\underline{n}}{m_e}} \, \underline{b} \,\nabla \times B_0
      \medskip \\
      \displaystyle \frac{\sqrt{m_i}}{m_e+m_i} \LL \big(\sqrt{m_e} u_{e0} +\sqrt{m_i} u_{i0} \big)
      + \sqrt{\frac{\underline{n}}{m_i}} \, \underline{b} \, \nabla \times B_0  \medskip \\
      0 \medskip \\
      \displaystyle \LL B_0
    \end{array}\right) \,.
  \end{equation}
\end{cor}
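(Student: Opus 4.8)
The plan is to feed the hypothesis $ \cU_0 = \PP_{\! e}\,\cU_0 $, i.e.\ $ \cU_0 \in \cH = {\rm Ker}\,\cL \cap {\rm Ker}\,\cG $, into the general formula \eqref{defPU0:2} and let the membership constraints collapse it. From the mode-by-mode description in Paragraphs~\ref{Para:0}--\ref{Para:1}, lying in $ \cH $ amounts to saying that $ E_0 = 0 $; that $ \varrho_{e0} $ and $ \varrho_{i0} $ carry only their mean modes, tied by $ \bar{\varrho}_{e0} = \sqrt{\underline{p}_e'/\underline{p}_i'}\ \bar{\varrho}_{i0} $; that the three vector fields are divergence free, so $ \LL u_{e0} = u_{e0} $, $ \LL u_{i0} = u_{i0} $ and $ \LL B_0 = B_0 $; and that Amp\`ere's law \eqref{eqn:ampere} holds, $ \nabla \times B_0 = J_0 $ with $ J_0 := \sqrt{\underline{n}/m_i}\,u_{i0} - \sqrt{\underline{n}/m_e}\,u_{e0} $ (at the zero mode this reads $ \bar{J}_0 = 0 $, the kernel condition $ \sqrt{m_i}\,\bar{u}_{e0} = \sqrt{m_e}\,\bar{u}_{i0} $ forced by $ \cL_0 $). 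I would then treat the density, velocity and magnetic blocks of \eqref{defPU0:2} in turn.

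For the two density lines I would only use the mean relation together with the definition of $ \underline{c} $ in \eqref{defabbar}. Setting $ r := \sqrt{\underline{p}_e'/\underline{p}_i'} $, the defining identity $ \underline{c}\,(r + r^{-1}) = 1 $ is equivalent to $ \underline{c}\,(r^2 + 1) = r $; hence $ \underline{c}\, r\,\bar{\varrho}_{e0} + \underline{c}\,\bar{\varrho}_{i0} = \underline{c}\,(r^2+1)\,\bar{\varrho}_{i0} = r\,\bar{\varrho}_{i0} = \bar{\varrho}_{e0} $, and symmetrically the second line returns $ \bar{\varrho}_{i0} $. This reproduces the first two entries of \eqref{defPU0:2simpli}.

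The velocity lines are where the genuine cancellation takes place, and I expect this to be the only delicate point. After replacing each $ \LL u_{s0} $ by $ u_{s0} $, the mechanism is the Fourier-multiplier identity $ \underline{b}^2\Delta\,(1-\underline{b}\Delta)^{-1} = \underline{b}\,(1-\underline{b}\Delta)^{-1} - \underline{b} $, which splits every $ \underline{b}^2\Delta/(1-\underline{b}\Delta) $ coefficient into a part carrying the multiplier $ \underline{b}/(1-\underline{b}\Delta) $ and a bare multiple of $ \underline{b} $. Collecting the three multiplier-bearing pieces of the electron line produces $ \underline{b}\,(1-\underline{b}\Delta)^{-1}\sqrt{\underline{n}/m_e}\,\bigl(-\nabla\times B_0 - \sqrt{\underline{n}/m_e}\,u_{e0} + \sqrt{\underline{n}/m_i}\,u_{i0}\bigr) $, and the bracket is precisely $ -\nabla\times B_0 + J_0 = 0 $ by Amp\`ere's law, so this entire contribution disappears. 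The surviving bare terms regroup, again through Amp\`ere's law, as $ \underline{b}\sqrt{\underline{n}/m_e}\,\bigl(\sqrt{\underline{n}/m_e}\,u_{e0} - \sqrt{\underline{n}/m_i}\,u_{i0}\bigr) = -\underline{b}\sqrt{\underline{n}/m_e}\,\nabla\times B_0 $, which is the third entry of \eqref{defPU0:2simpli}; the ion line is identical with the signs and masses exchanged.

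Finally, on the magnetic line I would substitute $ \LL B_0 = B_0 $ and $ \nabla\times J_0 = \nabla\times(\nabla\times B_0) = -\Delta B_0 $, the last equality using $ \nabla\cdot B_0 = 0 $. The sixth entry $ (1-\underline{b}\Delta)^{-1}\bigl(\LL B_0 + \underline{b}\,\nabla\times J_0\bigr) $ then becomes $ (1-\underline{b}\Delta)^{-1}(1-\underline{b}\Delta)B_0 = B_0 = \LL B_0 $. Assembling the three blocks yields \eqref{defPU0:2simpli}. The whole argument is a controlled substitution: beyond reading off the constraints characterizing $ \cH $, it needs only the scalar identity for $ \underline{c} $, the single operator identity above, and the vector-calculus relation $ \nabla\times\nabla\times = \nabla\,{\rm div} - \Delta $, so no genuinely new estimate enters.
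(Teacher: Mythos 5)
Your proposal is correct and follows exactly the paper's own route: the paper likewise establishes the first two entries from the mean-mode constraint \eqref{threeconstants} and then obtains the velocity and magnetic entries by plugging Amp\`ere's law \eqref{Ampereslaw} into the general formula \eqref{defPU0:2} and observing the cancellations. Your write-up merely makes explicit (via the multiplier identity $\underline{b}^2\Delta(1-\underline{b}\Delta)^{-1}=\underline{b}(1-\underline{b}\Delta)^{-1}-\underline{b}$ and $\nabla\times\nabla\times B_0=-\Delta B_0$) the cancellations the paper leaves to the reader, and all the computations check out.
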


\begin{proof} Since $ \cU_0 = \PP_{\! e} \, \cU_0 $ is polarized inside $\cH$, we can exploit  \eqref{threeconstants}. This furnishes the two first components of \eqref{defPU0:2simpli}. Since $ \cU_0 = \PP_{\! e} \, \cU_0 $, we can also implement Amp\`ere's law \eqref{eqn:ampere} which in our context reads
  \begin{equation}
    \label{Ampereslaw}
    \nabla \times B_0 = J_{0}\,, \qquad
    J_0 := \sqrt{\frac{\underline{n}}{m_i}}\, u_{i0} -\sqrt{\frac{\underline{n}}{m_e}}\, u_{e0} \, .
  \end{equation}
  Just plug \eqref{Ampereslaw} inside \eqref{defPU0:2} to see cancellations which lead to \eqref{defPU0:2simpli}.
\end{proof}

 %%%%%%%%%%%%%%%%%%%%%

\subsection{The modulation equations}
\label{sec:modulation}
In Paragraph \ref{subs:contentESLM}, we make explicit the coefficients involved in the effective slow limit  equations. In Paragraph \ref{subs:end}, starting from prepared initial data $ \cU_0^0 = \PP_{\! e} \, \cU_0^0 $, we show that ESLM, SLM and XMHD are equivalent systems. In Paragraph \ref{subs:irroflows}, we consider separately what happens in the case of irrotational flows.

%%%%%%%%%%%%%%%%%%%%%

\subsubsection{Computation of the ESL Model}
\label{subs:contentESLM}
We are now ready to detail the content  of \eqref{modu0effective}. This can be done by inserting $ \cU_0 = \PP_{\! e} \, \cU_0 $ with $ \PP_{\! e} \, \cU_0 $ as in  \eqref{defPU0:2simpli} into \eqref{modu0effective}. Recall that
\begin{equation}
\label{eqn:CMV}
u_0 = \frac{\sqrt{m_e}\, u_{e0} + \sqrt{m_i}\, u_{i0}}{\sqrt{\underline n}\,(m_e+m_i)} = \frac{m_e \, v_{e0} + m_i \, v_{i0}}{m_e+m_i} \, ,
\end{equation}
corresponds (in terms of the fluid velocities $ v_e $ and $ v_i $) to the velocity of the center of mass.

\begin{lem}[Computation of $ \PP_{\! e} \cF(0,\PP_{\! e} \,\cU_0) $]
  \label{compucf000}
  We find 
  \begin{equation}\label{pecfpe}
    \quad \PP_{\! e} \cF(0,\PP_{\! e} \,\cU_0) =
    \left(\begin{array}{c}
      0\\0\\ \displaystyle \frac{\sqrt{{\underline n} m_e}}{m_e m_i} \, {\underline b} \ \LL \bigl( (\nabla \times B_0) \times B_0 \bigr) + \sqrt{\frac{\underline{n}}{m_e}} \,  \frac{\underline{b} \, \Delta}{1 - \underline{b} \, \Delta} \, \LL (D_0 \times B_0) \medskip \\
      \displaystyle \frac{\sqrt{{\underline n} m_i}}{m_e m_i} \, {\underline b} \ \LL \bigl( (\nabla \times B_0) \times B_0 \bigr) -  \sqrt{\frac{\underline{n}}{m_i}} \,  \frac{\underline{b}\, \Delta}{1 - \underline{b} \, \Delta} \, \LL (D_0 \times B_0)\medskip\\ 0 \\
      \displaystyle (1-\underline{b}\, \Delta)^{-1}\, \nabla \times (D_0 \times B_0)
  \end{array} \right) \,,
  \end{equation}
  where, with $\updelta$ and $\myunderbar{\rho}$  given by \eqref{eqn:cteXMHD} with $Z=1$, we have introduced the auxiliary expression
  \begin{equation}
    \label{computedeff+}
    \qquad D_0 := \LL u_0 - (1-\updelta)\, (m_i / \myunderbar{\rho})  \, \nabla \times B_0 \, .
  \end{equation}
\end{lem}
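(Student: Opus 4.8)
The plan is to reduce the whole computation to the general projector formula \eqref{defPU0:2} applied to the explicit nonlinearity $\cF$ evaluated at $\eps=0$ on a polarized argument. First I would record what $\cF(0,\PP_{\! e}\,\cU_0)$ actually is. By \eqref{mathscrR} one has $\myunderbar{\mathscr R}(h)(0,q)=h'(0)\,q$, so all the prefactors in \eqref{remainsspecify} become linear at $\eps=0$; moreover, since $\cU_0=\PP_{\! e}\,\cU_0$ forces the densities $\varrho_{e0},\varrho_{i0}$ to be constants, $E_0\equiv 0$, and $u_{e0},u_{i0},B_0$ to be divergence free. The components of $\cF(0,\PP_{\! e}\,\cU_0)$ are then $\cF_{\varrho_e}=\cF_{\varrho_i}=\cF_B=0$, $\cF_{u_e}=-u_{e0}\times B_0/m_e$, $\cF_{u_i}=u_{i0}\times B_0/m_i$, together with some electric entry $\cF_E$.

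The key structural observation is that $\PP_{\! e}$ annihilates the electric slot: in \eqref{defPU0:2} the input $E_0$ appears nowhere, because every basis vector generating $\cH_k$ has vanishing $E$-component. Hence $\cF_E$ is irrelevant, and I only have to feed \eqref{defPU0:2} with velocity entries $\cF_{u_e},\cF_{u_i}$ and zero density/magnetic/electric entries. The density rows and the $E$-row of the output then vanish immediately, giving the two leading zeros and the fifth zero of \eqref{pecfpe}.

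For the velocity and magnetic rows I would exploit Ampère's law \eqref{Ampereslaw}. The non-$\Delta$ part of the velocity rows is driven by $\sqrt{m_e}\,\cF_{u_e}+\sqrt{m_i}\,\cF_{u_i}=(u_{i0}/\sqrt{m_i}-u_{e0}/\sqrt{m_e})\times B_0=\underline{n}^{-1/2}\,J_0\times B_0=\underline{n}^{-1/2}(\nabla\times B_0)\times B_0$, which after the arithmetic identity $\sqrt{\underline{n}\,m_s}\,\underline{b}/(m_em_i)=\sqrt{m_s}/(\sqrt{\underline{n}}\,(m_e+m_i))$ reproduces exactly the first terms of the third and fourth lines of \eqref{pecfpe}. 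It remains to treat the $\underline{b}^2\Delta/(1-\underline{b}\Delta)$ contributions. Here the combination that emerges is $\underline{b}\sqrt{\underline{n}}\,(u_{e0}/m_e^{3/2}+u_{i0}/m_i^{3/2})$, and the crux is to show that this equals $D_0$. Substituting $u_0$ from \eqref{eqn:CMV}, $\nabla\times B_0=J_0$ from \eqref{Ampereslaw}, and using $\myunderbar{\rho}=\underline{n}(m_e+m_i)$, $\underline{b}=m_em_i/\myunderbar{\rho}$ and $\updelta=m_e/m_i$, one checks that the coefficients of $u_{e0}$ and $u_{i0}$ in $\sqrt{\underline{n}}\,D_0=\sqrt{\underline{n}}\,\LL u_0-(1-\updelta)(m_i/\myunderbar{\rho})\sqrt{\underline{n}}\,\nabla\times B_0$ collapse to $m_i/((m_e+m_i)\sqrt{m_e})$ and $m_e/((m_e+m_i)\sqrt{m_i})$, matching the mass weights of the $\Delta$-terms. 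This simultaneously yields the $\pm\sqrt{\underline{n}/m_s}\,\underline{b}\Delta/(1-\underline{b}\Delta)\,\LL(D_0\times B_0)$ terms in the velocity rows and, applying $\nabla\times$ to the same combination, the magnetic row $(1-\underline{b}\Delta)^{-1}\nabla\times(D_0\times B_0)$.

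The main obstacle is purely this last bookkeeping: getting the precise coefficient $(1-\updelta)\,m_i/\myunderbar{\rho}$ in front of $\nabla\times B_0$ to appear out of the mass-weighted velocity combination is what fixes the definition \eqref{computedeff+} of $D_0$, and the cross-products must be tracked carefully through the Leray projector since $\LL$ does not commute with multiplication by $B_0$. Once $D_0$ is identified, the three surviving rows follow from the same identity, and gathering the pieces gives \eqref{pecfpe}.
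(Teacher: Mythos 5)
Your proposal is correct and follows essentially the same route as the paper: evaluate $\cF$ at the polarized state (where the $E$-slot is irrelevant because \eqref{defPU0:2} does not see it), apply the full projector formula \eqref{defPU0:2} rather than the simplified \eqref{defPU0:2simpli}, and use Amp\`ere's law \eqref{Ampereslaw} to identify the mass-weighted velocity combinations with $\nabla \times B_0$ and $D_0$. The only cosmetic difference is directional: the paper substitutes the representation \eqref{lesdeuxus} of $\tilde u_{e0},\tilde u_{i0}$ into the emerging combination $\widetilde D_0$ to recognize $D_0$, whereas you expand $D_0$ via \eqref{eqn:CMV} and Amp\`ere's law and match the coefficients of $u_{e0},u_{i0}$; both computations are equivalent and your stated coefficients check out.
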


\begin{proof}
  Since $ \cU_0 = \PP_{\! e} \,\cU_0 $, from Corollary \ref{furthersimplification}, we have
  \begin{equation}
    \label{reducedppe}
    \PP_{\! e} \, \cU_0 = {}^t \bigl(
    \bar{\varrho}_{e0}, \bar{\varrho}_{i0},
    \sqrt{\underline n} \, {}^t \tilde u_{e0}, \sqrt{\underline n} \, {}^t \tilde u_{i0}, 0 , {}^t \LL B_0 \bigr) \, .
  \end{equation}
  with
  \begin{equation}
    \label{lesdeuxus}
    \qquad \qquad \tilde {u}_{e0} := \sqrt{m_e}\, \LL u_0 - \frac{\underline{b}}{\sqrt {m_e}} \, \nabla \times B_0\,,\qquad
    \tilde {u}_{i0} := \sqrt{m_i} \, \LL u_0 + \frac{\underline{b}}{\sqrt {m_i}} \, \nabla \times B_0\,.
  \end{equation}
  From there, coming back to \eqref{remainsspecify}, we find
  \begin{equation}
    \label{devdecF}
    \quad \cF(0,\PP_{\! e} \,\cU_0) = {}^t \Bigl( 0 , 0 , - \sqrt{\underline n} \, {}^t \bigl(\tilde u_{e0} \times ( \LL B_0/ m_e ) \bigr),
    \sqrt{\underline n} \, {}^t \bigl( \tilde u_{i0} \times (\LL B_0 / m_i) \bigr) , \cF_E , 0\Bigr) \,.
  \end{equation}
  There is no guarantee that  $ \cF(0,\PP_{\! e} \,\cU_0) $ is polarized inside $ \cH $. We cannot  employ \eqref{defPU0:2simpli}. Instead, we have to implement \eqref{defPU0:2} which leads to
  \[
  \PP_{\! e} \cF(0,\PP_{\! e} \,\cU_0) =
  \left(\begin{array}{c}
    0\\0\\ \displaystyle \frac{\sqrt{{\underline n} m_e}}{m_e + m_i} \ \LL ( \widetilde S_0 \times B_0 ) + \sqrt{\frac{\underline{n}}{m_e}} \,  \frac{\underline{b} \, \Delta}{1 - \underline{b} \, \Delta} \, \LL (\widetilde D_0 \times B_0) \medskip \\
    \displaystyle \frac{\sqrt{{\underline n} m_i}}{m_e + m_i} \ \LL ( \widetilde S_0 \times B_0 ) -  \sqrt{\frac{\underline{n}}{m_i}} \,  \frac{\underline{b}\, \Delta}{1 - \underline{b} \, \Delta} \, \LL (\widetilde D_0 \times B_0)\medskip\\ 0 \\
    \displaystyle (1-\underline{b}\, \Delta)^{-1}\, \nabla \times (\widetilde D_0 \times B_0)
  \end{array} \right)\, ,
  \]
  with
  \[
  \widetilde S_0 := \frac{\tilde{u}_{i0}}{\sqrt{m_i}} - \frac{\tilde{u}_{e0}}{\sqrt{m_e}} \, ,
  \qquad \widetilde D_0 := \underline{n} \, \underline{b} \ \bigg( \frac{\tilde{u}_{i0}}{m_i \, \sqrt{m_i}} + \frac{\tilde{u}_{e0}} {m_e \, \sqrt{m_e}} \, \bigg) \,.
  \]
  From \eqref{lesdeuxus}, we obtain
  \[
  \begin{array}{l}
    \displaystyle \widetilde S_0 = \underline{b} \ \Bigl(\frac{1}{m_e} + \frac{1}{m_e} \Bigr) \
    \nabla \times B_0 = \frac{m_e +m_i}{m_e \, m_i} \, {\underline b} \ \nabla \times B_0 \, , \smallskip \\
    \displaystyle \widetilde D_0 = \LL u_0 + {\underline n} \, {\underline b}^2 \ \Bigl(\frac{1}{m_i^2} - \frac{1}{m_e^2} \Bigr) \ \nabla \times B_0 = D_0 \, .
  \end{array}
  \]
  After substitution, we recognize \eqref{pecfpe}.
\end{proof}

\begin{lem}[Computation of the quasilinear part inside ESLM]
  \label{computatt}
  We find
  \begin{equation}
    \label{pecfpebis}
    \qquad \sum_{j=1}^3 \PP_{\! e} \cA_j(0,\PP_{\! e}\,\cU_0) \, \part_{x_j} \PP_{\! e}\,\cU_0 = -  \left(\begin{array}{c}
      0 \\0 \\
      \displaystyle \sqrt{\underline{n} m_e} \,\LL \nabla \cdot S_0 + \sqrt{\frac{\underline{n}}{m_e}} \, \frac{\underline{b} \, \Delta}{1-\underline{b}\, \Delta} \ \LL T_0 \medskip \\
      \displaystyle \sqrt{\underline{n} m_i} \,\LL \nabla \cdot S_0 -  \sqrt{\frac{\underline{n}}{m_i}} \, \frac{\underline{b} \, \Delta}{1-\underline{b}\, \Delta} \ \LL T_0\medskip \\
      0\\
      \displaystyle   (1-\underline{b}\, \Delta)^{-1}\, \nabla \times T_0
    \end{array} \right) \, ,
  \end{equation}
  where
  \begin{equation}
    \label{deddds}
    S_0 := \LL u_0 \otimes \LL u_0 + \myunderbar{\rho}^{-1} \, \underline{b} \  (\nabla \times B_0) \otimes (\nabla \times B_0) \, ,
  \end{equation}
  and
  \begin{equation}
    \label{defffT}
    \begin{array}{rl}
      T_0 = \! \! \! & - \,  \underline{b} \ \LL u_0 \times \bigl( \nabla \times (\nabla \times B_0) \bigr) - \underline{b} \,  (\nabla \times B_0) \times ( \nabla \times \LL u_0) \smallskip \\
   \ & \displaystyle + \,
   \myunderbar{\rho}^{-2} \, m_i^3 \, \updelta \, (1-\updelta) \,  (\nabla \times B_0) \times \bigl( \nabla \times  (\nabla \times B_0) \bigr) \,.
    \end{array}
  \end{equation}
\end{lem}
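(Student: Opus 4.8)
The plan is to follow the same template as in the proof of Lemma~\ref{compucf000}: reduce $ \cU_0 $ to its polarized form, compute the raw nonlinearity, and then apply the projector $ \PP_{\! e} $ through its \emph{full} expression \eqref{defPU0:2} (not the simplified \eqref{defPU0:2simpli}, since the quasilinear term is not polarized inside $ \cH $). Since $ \cU_0 = \PP_{\! e} \, \cU_0 $, Corollary~\ref{furthersimplification} lets me replace $ \cU_0 $ by \eqref{reducedppe}--\eqref{lesdeuxus}: the densities $ \bar\varrho_{e0}, \bar\varrho_{i0} $ are spatial constants, the electric field vanishes, the magnetic component is $ \LL B_0 $, and the two velocities are $ \sqrt{\underline n}\, \tilde u_{e0} $ and $ \sqrt{\underline n}\, \tilde u_{i0} $ with $ \tilde u_{s0} $ built from $ \LL u_0 $ and $ \nabla \times B_0 $. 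It is convenient to set $ w := \LL u_0 $ and $ b := \nabla \times B_0 $, both divergence free, so that $ \tilde u_{e0} = \sqrt{m_e}\, w - (\underline b/\sqrt{m_e})\, b $ and $ \tilde u_{i0} = \sqrt{m_i}\, w + (\underline b/\sqrt{m_i})\, b $.

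Next I would evaluate the raw term $ \sum_{j=1}^3 \cA_j(0,\PP_{\! e}\,\cU_0) \, \part_{x_j}\PP_{\! e}\,\cU_0 $ block by block from the explicit matrices $ \cA_j $. In the two density rows every contribution is killed: the terms carrying $ \part_{x_j}\varrho_{s0} $ vanish because $ \varrho_{s0} $ is constant, and the terms $ \myunderbar{\mathscr R}(a_s)\,{}^te^j\,\part_{x_j}(\sqrt{\underline n}\,\tilde u_{s0}) $ assemble into $ \nabla\cdot\tilde u_{s0} = 0 $ by incompressibility of $ w $ and $ b $. The $ E $ and $ B $ rows vanish identically because the corresponding blocks of $ \cA_j $ are zero. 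In the velocity rows the coefficients $ \myunderbar{\mathscr R}(a_s)(0,\cdot) $ multiply $ \part_{x_j}\varrho_{s0} = 0 $, while the velocity--velocity entries assemble, upon summing over $ j $, into the convective derivatives $ (\tilde u_{s0}\cdot\nabla)\tilde u_{s0} $. Hence the raw term is $ {}^t(0,0,\,{}^tX_e,\,{}^tX_i,0,0) $ with $ X_s := -\sqrt{\underline n/m_s}\,(\tilde u_{s0}\cdot\nabla)\tilde u_{s0} $.

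I would then feed $ {}^t(0,0,{}^tX_e,{}^tX_i,0,0) $ into \eqref{defPU0:2}, i.e. set $ u_{e0}\mapsto X_e $, $ u_{i0}\mapsto X_i $ and all other input components to zero. This returns zero densities and zero electric field, a magnetic row $ \tfrac{\underline b}{1-\underline b\,\Delta}\,\nabla\times(\sqrt{\underline n/m_i}\,X_i - \sqrt{\underline n/m_e}\,X_e) $, and two velocity rows built from the symmetric combination $ \tfrac{\sqrt{m_s}}{m_e+m_i}\,\LL(\sqrt{m_e}X_e+\sqrt{m_i}X_i) $ together with the $ \tfrac{\underline b^2}{1-\underline b\,\Delta}\,\Delta\,\LL $ corrections. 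The computation then splits into a \emph{symmetric} part (the combination $ (\tilde u_{e0}\cdot\nabla)\tilde u_{e0}+(\tilde u_{i0}\cdot\nabla)\tilde u_{i0} $) and an \emph{antisymmetric} part ($ \tfrac1{m_e}(\tilde u_{e0}\cdot\nabla)\tilde u_{e0}-\tfrac1{m_i}(\tilde u_{i0}\cdot\nabla)\tilde u_{i0} $, which governs both the magnetic row and the $ \Delta\,\LL $ corrections). Expanding $ \tilde u_{s0} $ in $ w $ and $ b $, the symmetric part retains $ (m_e+m_i)(w\cdot\nabla)w $ and $ \underline b^2(\tfrac1{m_e}+\tfrac1{m_i})(b\cdot\nabla)b $, the mixed $ (w\cdot\nabla)b $ and $ (b\cdot\nabla)w $ terms cancelling; this yields $ \LL\nabla\cdot S_0 $ with $ S_0 $ as in \eqref{deddds} after using $ \myunderbar\rho^{-1}\underline b = \underline b^2/(m_em_i) $, which is just \eqref{defabbar}--\eqref{eqn:cteXMHD}. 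In the antisymmetric part the $ (w\cdot\nabla)w $ contributions cancel instead, leaving $ -\tfrac1{\underline n}\bigl((w\cdot\nabla)b+(b\cdot\nabla)w\bigr)+\tfrac{m_i-m_e}{\underline n^2(m_e+m_i)}(b\cdot\nabla)b $.

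The decisive step is to turn these convective expressions into the cross-product form of $ T_0 $ in \eqref{defffT}. For this I would invoke the identities $ (A\cdot\nabla)B+(B\cdot\nabla)A = \nabla(A\cdot B)-A\times(\nabla\times B)-B\times(\nabla\times A) $ and $ (b\cdot\nabla)b=\nabla(\tfrac12|b|^2)-b\times(\nabla\times b) $, and then annihilate every gradient: under $ \LL $ in the velocity rows ($ \LL\nabla(\cdot)=0 $) and under $ \nabla\times $ in the magnetic row ($ \nabla\times\nabla(\cdot)=0 $). This replaces $ (w\cdot\nabla)b+(b\cdot\nabla)w $ by $ -w\times(\nabla\times b)-b\times(\nabla\times w) $ and $ (b\cdot\nabla)b $ by $ -b\times(\nabla\times b) $, reproducing exactly the three terms of $ T_0 $. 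The last verification is purely scalar: one checks $ \underline b=m_em_i/\myunderbar\rho $ and, with $ \updelta=m_e/m_i $, the coefficient identity $ \underline b\,\dfrac{m_i-m_e}{\underline n(m_e+m_i)}=\myunderbar\rho^{-2}\,m_i^3\,\updelta\,(1-\updelta) $, which pins down the last term of $ T_0 $. Assembling the symmetric and antisymmetric contributions, and not forgetting the overall minus sign in \eqref{pecfpebis}, gives the claim. I expect the main obstacle to be the bookkeeping of the non-polarized projection together with these curl/Leray identities: it is easy to misplace a sign or a factor of $ \underline b $ when the gradient-annihilation and the scalar coefficient identities are combined to isolate the $ b\times(\nabla\times b) $ term.
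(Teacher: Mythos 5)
Your proposal is correct and follows essentially the same route as the paper's proof: reduce to the polarized form \eqref{reducedppe}--\eqref{lesdeuxus}, compute the raw quasilinear term (which collapses to the two convective terms $-\sqrt{\underline n/m_s}\,(\tilde u_{s0}\cdot\nabla)\tilde u_{s0}$), apply the full projector \eqref{defPU0:2}, split into the symmetric combination (giving $S_0$) and the antisymmetric one (giving $T_0$ via the vector-dot-Del identity and gradient annihilation under $\LL$ and $\nabla\times$), and check the scalar coefficient identities. The paper phrases the two combinations as tensors $\wideparen S_0$ and $\wideparen D_0$ rather than convective derivatives, but this is only a cosmetic difference, and your coefficient verifications ($\underline b = m_e m_i/\myunderbar{\rho}$, the cancellation of mixed terms, and the $\updelta(1-\updelta)$ identity) all match.
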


\begin{proof}
  The starting point is \eqref{reducedppe} and \eqref{lesdeuxus}. From the definition of the matrices $ \cA_j (0,\cdot) $, since both vector fields $ \tilde u_{e0} $ and $ \tilde u_{i0} $ are divergence free, we obtain
  \begin{equation}
    \label{devdecA}
    \sum_{j=1}^3 \cA_j(0,\PP_{\! e}\,\cU_0) \, \part_{x_j} \PP_{\! e} \,\cU_0  =
    - \left(\begin{array}{c}
      0\\0\\ \displaystyle
      \sqrt{\frac{\underline{n}}{m_e}} \ \nabla \cdot (\tilde {u}_{e0} \otimes \tilde {u}_{e0}) \medskip \\
      \displaystyle \sqrt{\frac{\underline{n}}{m_i}} \ \nabla \cdot (\tilde {u}_{i0} \otimes \tilde {u}_{i0}) \medskip  \\  0\\ 0
    \end{array} \right) \,.
  \end{equation}
  We compose \eqref{devdecA} on the left with $ \PP_{\! e} $. To this end, we  implement  \eqref{defPU0:2}. This furnishes
  \[
  \sum_{j=1}^3 \PP_{\! e} \cA_j(0,\PP_{\! e}\,\cU_0) \, \part_{x_j} \PP_{\! e}\,\cU_0 = -\PP_{\! e}
  \left(\begin{array}{c}
    0\\
    0\\
    \displaystyle
    \sqrt{\underline{n} \,  m_e} \ \LL (\nabla \cdot \wideparen S_0) + \sqrt{\frac{\underline{n}}{m_e}} \,
    \frac{\underline{b}^2 \, \Delta}{1-\underline{b}\, \Delta} \ \LL (\nabla \cdot \wideparen D_0) \medskip \\
    \displaystyle
    \sqrt{\underline{n} \,  m_i} \ \LL (\nabla \cdot \wideparen S_0) - \sqrt{\frac{\underline{n}}{m_i}} \,
    \frac{\underline{b}^2 \, \Delta}{1-\underline{b}\, \Delta} \ \LL (\nabla \cdot \wideparen D_0) \medskip  \\  0\\
    \displaystyle  \frac{\underline{b}}{1-\underline{b}\, \Delta} \ \nabla \times (\nabla \cdot \wideparen D_0 )
  \end{array} \right) \,,
  \]
  with
  \[
  \wideparen S_0 := \frac{1}{m_e + m_i} \ (\tilde {u}_{e0} \otimes \tilde {u}_{e0} + \tilde {u}_{i0} \otimes \tilde {u}_{i0}) \, ,
  \qquad \wideparen D_0 := \frac{\underline n}{m_i} \, \tilde {u}_{i0} \otimes \tilde {u}_{i0} -  \frac{\underline n}{m_e} \, \tilde {u}_{e0} \otimes \tilde {u}_{e0} \,.
  \]
  We can substitute the values of $ \tilde u_{s0} $ given by \eqref{lesdeuxus} inside $ \wideparen S_0 $ to get $ \wideparen S_0 = S_0 $ with $ S_0 $ as in \eqref{deddds}. Moreover, exploiting the vector-dot-Del identity
  \begin{equation}
    \label{vector-dot-Del}
    (\tilde u_s \cdot \nabla) \tilde u_s = \frac{1}{2} \ \nabla \vert \tilde u_s \vert^2 - \tilde  u_s \times (\nabla \times \tilde u_s) \, , \qquad s \in \{e,i \} \, ,
  \end{equation}
  we obtain
  \begin{align*}
    & \nabla \cdot \wideparen D_0 = {\underline n} \ \nabla \cdot \bigg(
    \frac{\tilde {u}_i\otimes \tilde {u}_i}{ m_i}
    - \frac{\tilde {u}_e\otimes \tilde {u}_e}{m_e}\bigg) = {\underline n} \ \bigg(
    \frac{(\tilde {u}_i \cdot  \nabla )  \tilde {u}_i}{ m_i}
    - \frac{(\tilde {u}_e \cdot   \nabla   \tilde {u}_e)}{m_e}\bigg) \\
    & \qquad = \, \frac{{\underline n}}{2} \, \nabla \bigg(
    \frac{\vert \tilde {u}_i \vert^2}{ m_i}
    -
    \frac{\vert \tilde {u}_e \vert^2}{ m_e}\bigg) - \frac{{\underline n}}{m_i} \, \tilde {u}_i \times (\nabla \times \tilde {u}_i) +  \frac{{\underline n}}{m_e} \, \tilde {u}_e \times (\nabla \times \tilde {u}_e) \nonumber \\
    &\qquad = \frac{{\underline n}}{2} \, \nabla \bigg(
    \frac{\vert \tilde {u}_i \vert^2}{ m_i} -
    \frac{\vert \tilde {u}_e \vert^2}{ m_e}\bigg) - \LL u_0 \times \bigl( \nabla \times (\nabla \times B_0) \bigr) -  (\nabla \times B_0) \times ( \nabla \times \LL u_0) \\
    &  \qquad \quad \ + \, \frac{m_i^3} {{\underline b}\, \myunderbar{\rho}^2}\, \updelta \, (1-\updelta) \,  (\nabla \times B_0) \times \bigl( \nabla \times  (\nabla \times B_0) \bigr) \,.
  \end{align*}
  The gradient disappears under the action of $ \nabla \times $. It follows that $ {\underline b} \, \nabla \times (\nabla \cdot \wideparen D_0) = \nabla \times T_0 $ with $ T_0 $ as in \eqref{defffT}. From there, we have also
  \[
    {\underline b} \, \Delta  \LL (\nabla \cdot \wideparen D_0) = - \underline{b} \,
    \nabla \times \nabla \times  (\nabla \cdot \wideparen D_0) = - \nabla \times \nabla \times T_0 = \Delta \LL T_0 \,.
  \]
  After substitution, we recover the content of Lemma \ref{computatt}.
\end{proof}

 %%%%%%%%%%%%%%%%%%%%%

\subsubsection{Comparison of ESLM with SLM and XMHD}
\label{subs:end} SLM is meaningful when $ \cU_0^0 = \PP \, \cU_0^0 $. It can also be considered under the more restrictive condition $ \cU_0^0 = \PP_{\! e} \, \cU_0^0 $. Then, looking at SLM comes down to solving ESLM.

\begin{cor}[Equivalence between ESLM and SLM]
  \label{equiequi}
  Assume that the initial data $ \cU_0^0 $ is polarized inside $ \cH $. Then, the solution to \eqref{modu0effective} is also a solution to \eqref{modu0prepa2}.
\end{cor}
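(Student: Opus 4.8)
The plan is to show that the solution $V := \PP_{\! e} \, \cU_0$ of ESLM \eqref{modu0effective} is automatically a solution of SLM \eqref{modu0prepa2}, by checking that the two nonlinear right-hand sides coincide once they are evaluated at $V$. First I would record the two structural facts that make $V$ an admissible SLM solution. Since the right-hand side of \eqref{modu0effective} lies in the image of $\PP_{\! e}$, and the initial datum $\PP_{\! e} \, \cU_0^0$ does too, the solution stays polarized, i.e. $V = \PP_{\! e} \, V \in \cH$ for all $t$. Because $\cH \subset \mathrm{Ker}\, \cL$ we have $\PP \, \PP_{\! e} = \PP_{\! e}$ (established in Paragraph~\ref{Para:3}), whence $\PP \, V = \PP \, \PP_{\! e} \, V = \PP_{\! e} \, V = V$; thus $V = \PP \, V$, so $V$ has exactly the form required of a solution to \eqref{modu0prepa2}.

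The core of the argument is then the identity
\[
\sum_{j=1}^3 \PP \, \cA_j(0,V) \, \part_{x_j} V + \PP \, \cF(0,V) = \sum_{j=1}^3 \PP_{\! e} \, \cA_j(0,V) \, \part_{x_j} V + \PP_{\! e} \, \cF(0,V) \, .
\]
By Lemma~\ref{actions}, $\PP$ and $\PP_{\! e}$ act identically on the subspace $\cK$ of \eqref{cestpourcK}, so it suffices to verify that both the quasilinear flux $\sum_j \cA_j(0,V) \, \part_{x_j} V$ and the source $\cF(0,V)$ belong to $\cK$. For the flux this is immediate from \eqref{devdecA}: the resulting column vector has vanishing density, electric-field and magnetic components, hence trivially lies in $\cK$. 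For the source I would use \eqref{devdecF}: its density and magnetic components vanish and its velocity components are unconstrained, so the only condition left to check is the divergence-free requirement $\nabla \cdot \cF_E = 0$ on the electric-field component.

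This divergence-free check on $\cF_E$ is the one genuine point, and it hinges on the special shape of a field polarized inside $\cH$. By \eqref{reducedppe} (Corollary~\ref{furthersimplification}), the two density components of $V = \PP_{\! e} \, V$ collapse to the spatially constant mean values $\bar{\varrho}_{e0}, \bar{\varrho}_{i0}$, while its velocity components $\sqrt{\underline n}\, \tilde u_{e0}$ and $\sqrt{\underline n}\, \tilde u_{i0}$ are divergence free. Evaluating \eqref{remainsspecify} at $\eps = 0$ with the help of \eqref{mathscrR} then expresses $\cF_E$ as a \emph{constant}-coefficient linear combination of the divergence-free fields $u_{e0}$ and $u_{i0}$; a constant-coefficient combination of divergence-free fields is itself divergence free, so $\nabla \cdot \cF_E = 0$ and hence $\cF(0,V) \in \cK$. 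Lemma~\ref{actions} then applies to both terms and the two right-hand sides coincide.

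Finally I would match the initial data: since $\cU_0^0 = \PP_{\! e} \, \cU_0^0$, the ESLM datum gives $V|_{t=0} = \PP_{\! e} \, \cU_0^0 = \cU_0^0$, while $\PP \, \cU_0^0 = \PP \, \PP_{\! e} \, \cU_0^0 = \PP_{\! e} \, \cU_0^0 = \cU_0^0$, so $\PP \, V|_{t=0} = \PP \, \cU_0^0$ as demanded by \eqref{modu0prepa2}. Combined with $V = \PP \, V$ and the equality of the right-hand sides, this shows that $V$ solves SLM. I expect the divergence-free verification of $\cF_E$ — and in particular the observation that the density modes of an $\cH$-polarized field are spatially constant — to be the only substantive step; everything else is bookkeeping with the relation $\PP \, \PP_{\! e} = \PP_{\! e}$ and with Lemma~\ref{actions}.
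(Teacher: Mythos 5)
Your proposal is correct and follows essentially the same route as the paper: reduce to showing that $(\PP-\PP_{\! e})$ annihilates the right-hand side evaluated at $\PP_{\! e}\,\cU_0$, invoke Lemma~\ref{actions} via membership in $\cK$, and isolate $\nabla\cdot\cF_E(0,\PP_{\! e}\,\cU_0)=0$ as the only substantive check. Your way of settling that last point (spatially constant density modes multiplying the divergence-free fields $\tilde u_{e0},\tilde u_{i0}$) is valid and slightly leaner than the paper's, which instead computes $\cF_E$ explicitly as a constant multiple of $\nabla\times B_0$ using \eqref{threeconstants} and \eqref{lesdeuxus}.
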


\begin{proof}
  Let $ \cU_0 = \PP_{\! e} \,  \cU_0 $ be the solution to \eqref{modu0effective}. We must prove that this $ \cU_0 $ satisfies \eqref{modu0prepa2}. Since $ \PP \, \PP_{\! e} = \PP_{\! e} $, by substracting \eqref{modu0effective} from  \eqref{modu0prepa2}, it suffices to show that
  \[
  \sum_{j=1}^3 \, (\PP - \PP_{\! e}) \,  \cA_j(0,\PP_{\! e}\,\cU_0) \, \part_{x_j} \PP_{\! e} \, \cU_0 + (\PP - \PP_{\! e}) \cF(0,\PP_{\! e} \,\cU_0) = 0 \, .
  \]
  In view of \eqref{devdecF} and \eqref{devdecA}, this property is a direct consequence of Lemma \ref{actions} on condition that $ \nabla \cdot \cF_E (0,\PP_{\! e} \, \cU_0) = 0 $. Recall from \eqref{remainsspecify} and \eqref{reducedppe} that
  \[
  \cF_E (0,\PP_{\! e} \, \cU_0) = \myunderbar{\mathscr R} (g_e^{-1})\Big(0, \frac{ \bar{\varrho}_{e0}}{\sqrt{{\underline n} \,  m_e}} \Big)
  \ \frac{\tilde u_{e0}}{\sqrt{m_e}}  - \myunderbar{\mathscr R} (
  g_i^{-1}) \Big( 0 ,\frac{ \bar{\varrho}_{i\varepsilon}}{\sqrt{{\underline n} \,  m_i}} \Big) \ \frac{\tilde u_{i0}}{\sqrt{m_i}} \,.
  \]
  Through \eqref{mathscrR},  \eqref{threeconstants} and \eqref{lesdeuxus}, this can be further simplified into
  \[
  \cF_E (0,\PP_{\! e} \, \cU_0) := \bar{\varrho} \ \sqrt{\frac{\underline n}{{\underline p}'_i}}
  \ \bigg( \frac{\tilde u_{e0}}{\sqrt{m_e}} - \frac{\tilde u_{i0}}{\sqrt{m_i}} \bigg)
  = -  \frac{\bar{\varrho}}{\sqrt{{\underline n} \, {\underline p}'_i}} \ \nabla \times B_0 \,.
  \]
  It becomes apparent that $ \nabla \cdot \cF_E (0,\PP_{\! e} \, \cU_0) = 0 $, as expected.
\end{proof}

We now examine more precisely the different parts of  ESLM \eqref{modu0effective}. The component $ E_0 $ is absent from $ \PP_{\! e} \, \cU_0 $. Moreover, gathering \eqref{reducedppe}, Lemma \ref{compucf000} and Lemma \ref{computatt}, we find $ \part_t \bar{\varrho}_{s0} = 0 $. Thus, we can retain
\begin{equation*}
  \label{constantvarrho1}
  \qquad \varrho_{s0} (t) = \bar{\varrho}_{s0} (t) = \bar{\varrho}_{s0}^0 \, ,
  \qquad \forall \, t \in \mathbb R_+ \, , \qquad \forall \, s \in \{ e, i \} \, .
\end{equation*}
From \eqref{passagecUU0}, we immediately deduce 
\begin{equation*}
  \label{constantvarrho2}
  \qquad n_{s0} (t) = \overline{n}_{s0} (t) = \overline{n}_{s0}^0 = \sqrt{{\underline n}/{\underline p}'_s} \
  \bar{\varrho}^0_{s0} \, ,
  \qquad \forall \, t \in \mathbb R_+ \, , \qquad \forall \, s \in \{ e, i \} \, .
\end{equation*}
Recall that we must also have \eqref{threeconstants} so that (for $ Z=1 $)
\begin{equation*}
  n_{i0}(t,x) = n_{e0}(t,x) = \overline{n}_{i0}^0 = \overline{n}_{e0}^0 = \overline{n} \, ,
  \qquad \overline{n} := \sqrt{\underline n / {\underline p}'_i }\, \bar{\varrho} \, .
\end{equation*}
At this stage, we have already checked  \eqref{whichissuch}.

Now, we consider the velocity components. We sum the third vector components of  \eqref{defPU0:2simpli}, \eqref{pecfpe} and \eqref{pecfpebis}  multiplied by the weight $\sqrt{m_e/\underline{n}}/(m_e+m_i)$ to the fourth  vector components of respectively  \eqref{defPU0:2simpli}, \eqref{pecfpe} and \eqref{pecfpebis}  multiplied by $\sqrt{m_i/\underline{n}}/(m_e+m_i)$. By this way, we obtain
\begin{equation}
  \label{eqn:preMHD1}
  \begin{array}{rl} \displaystyle \partial_t (\LL u_0) + \LL \nabla \cdot (\LL u_0 \otimes \LL u_0) \! \! \! &
    \displaystyle + \, \updelta \, \frac{m_i^2 }{\myunderbar{\rho}^2 } \,
  \LL \nabla \cdot \big( (\nabla \times B_0) \otimes (\nabla \times B_0)\big) \\
  \ & \displaystyle - \,  \frac{1}{\myunderbar{\rho}} \,  \LL \big( (\nabla \times B_0) \times B_0\big)=0\,.
  \end{array}
\end{equation}
Using $\nabla \cdot B_0=0$ and \eqref{vector-dot-Del}, we get
\begin{equation}
  \label{simplfallait} \LL \nabla \cdot \big( (\nabla \times B_0) \otimes (\nabla \times B_0)\big) = \LL  \big((\nabla \times B_0) \times \Delta B_0) \big) \, .
\end{equation}
In coherence with \eqref{lienBB*2}, let us define
\begin{equation}
  \label{defB0star}
  B_0^\ast:=B_0 + (\updelta \, m_i^2 /\myunderbar{\rho}) \ \nabla \times (\nabla \times B_0) = (1- \underline{b} \, \Delta )B_0\, .
\end{equation}
With \eqref{simplfallait} and \eqref{defB0star}, the equation \eqref{eqn:preMHD1} can be recast into
\begin{equation}
  \label{eqn:preMHD2}
  \partial_t (\LL u_0) + \LL \nabla \cdot \big(\LL u_0 \otimes \LL u_0 \big) +
  \frac{1}{\myunderbar{\rho}} \,  \LL \big(B_0^\ast \times (\nabla \times B_0) \big)=0\,.
\end{equation}
The equation \eqref{eqn:preMHD2} is equivalent to the existence a scalar (pressure) function $p_0$ (a Lagrange multiplier) such that
\begin{equation}
  \label{eqn:preMHD3}
  \partial_t  u_0 + u_0 \cdot \nabla u_0 + \frac{1}{\myunderbar{\rho}}\,B_0^\ast \times (\nabla \times B_0) + \nabla p_0=0\,,  \qquad \nabla \cdot u_0=0\,.
\end{equation}
We turn now to the last vector components of  \eqref{defPU0:2simpli}, \eqref{pecfpe} and \eqref{pecfpebis}. After summation and applying the operator $(1-\underline{b}\,\Delta)$ to the sum, we obtain
\[
\part_t (\LL B_0^*) + \nabla \times (B_0 \times D_0 + T_0) = 0 \, ,
\]
which is the same as
\begin{equation}
  \label{eqn:preMHD67}
  \part_t B_0^* + \nabla \times (B_0 \times D_0 + T_0) = 0 \, , \qquad \nabla \cdot B_0^* = 0 \, .
\end{equation}
With the help of \eqref{computedeff+} and \eqref{defffT}, we can compute
\[
\begin{array}{rl}
  B_0 \times D_0 + T_0 = \! \! \! & \ B_0 \times \LL u_0 - (1-\updelta)\, (m_i / \myunderbar{\rho})  \, B_0 \times (\nabla \times B_0) \smallskip \\
  \ & - \underline{b} \ \LL u_0 \times \bigl( \nabla \times (\nabla \times B_0) \bigr) - \underline{b} \,  (\nabla \times B_0) \times ( \nabla \times \LL u_0) \smallskip \\
  \ & + \,
  \myunderbar{\rho}^{-2} \, m_i^3 \, \updelta \, (1-\updelta) \,  (\nabla \times B_0) \times \bigl( \nabla \times  (\nabla \times B_0) \bigr) \,.
\end{array}
\]
In the right-hand side, we can put together the first and third terms, as well as the second and last terms. Then, exploiting \eqref{defB0star}, there remains
\[
B_0 \times D_0 + T_0 = B_0^* \times \bigl(\LL u_0 - (1-\updelta)\, (m_i / \myunderbar{\rho})
\,  \nabla \times B_0 \bigr) + \underline{b} \,   ( \nabla \times \LL u_0) \times (\nabla \times B_0) \,.
\]
The three equations \eqref{defB0star}, \eqref{eqn:preMHD3} and \eqref{eqn:preMHD67} completed with $ B_0 \times D_0 + T_0 $ as above are exactly the same as \eqref{lienBB*2}, \eqref{divfreeini} and \eqref{syssimpli}. This means that $ (u_0,B_0) $ is indeed a solution to incompressible XMHD.
The analysis of ESLM  is not yet over. At this stage, we have extracted XMHD from ESLM. But, if we put aside the evolution of $ \varrho_{s0} $ which is  trivial, when dealing with $ \PP_{\! e} \, \cU_0 $, we have to focus on $ 9 $ equations on $ \bigl( u_{e0},u_{i0},  B_0 \bigr) \in \RR^9 $, while $ (u_0,B_0) \in \RR^6 $. But that is without counting on \eqref{Ampereslaw} which connects the velocities $ u_{e0} $ and $ u_{i0} $ to $ B_0 $. This implies that $ \PP_{\! e} \, \cU_0 $ involves in practice a lower number of six state variables, say $ (u_0,B_0) $.  However, in doing so, there are difficulties.

The first problem is that the combination (as vectors in $ \RR^{14} $) of \eqref{defPU0:2simpli}, \eqref{pecfpe} and \eqref{pecfpebis} in order to recover \eqref{modu0effective} could seem overdetermined with its $ 9 $ equations for its six remaining unknonws. This is  a little misleading. ESLM is by construction well-posed with a correct number of unknowns and equations if we take into account everywhere (including in the formulation of equations) the impact of the projection $ \PP_e $.

As a matter of fact, some equations issued from the juxtaposition of  \eqref{defPU0:2simpli}, \eqref{pecfpe} and \eqref{pecfpebis} are redundant.
 If we look at the difference\footnote{This operation is reminiscent of the derivation of the generalized Ohm's law (in Paragraph \ref{subsub:ohm}). Except that the filtering procedure does replace the electric filed by the Maxwell--Faraday equation, which leads to a repetition of \eqref{eqn:preMHD67}. } between the third vector components of  \eqref{defPU0:2simpli}, \eqref{pecfpe} and \eqref{pecfpebis}  multiplied by the weight $\sqrt{\underline{n}/ m_i} $ and the fourth  vector components of respectively  \eqref{defPU0:2simpli}, \eqref{pecfpe} and \eqref{pecfpebis}  multiplied by $  \sqrt{\underline{n}/ m_e}$, we obtain
\[
\nabla \times \bigl \lbrack \part_t B_0^* + \nabla \times (B_0 \times D_0 + T_0) \bigr \rbrack = 0 \, , \qquad \nabla \cdot B_0^* = 0 \, ,
\]
which is clearly induced by \eqref{eqn:preMHD67}.

The second trouble is that the use of $ (u_0,B_0) $ destroys the organisation of ESLM. It is clear that  \eqref{modu0effective} is a quasilinear symmetric system. Since $ \cU_0 = \PP_{\! e} \,\cU_0 $, we can exploit \eqref{Ampereslaw} to replace everywhere in Lemmas \ref{compucf000} and   \ref{computatt} the expression $ \nabla \times B_0 $ by $ J_0 $. From this perspective, the nonlinear pseudo-differential operators on $ \cU_0 $  implied at the level of \eqref{pecfpe} and \eqref{pecfpebis} are respectively of order $ 0 $ and $ 1 $.
But,  expressed in terms of $ (u_0,B_0) $, the lines \eqref{pecfpe} and \eqref{pecfpebis} involve operators of higher  order $ 1 $ and $ 2 $. In other words, due to the dispersive relation \eqref{Ampereslaw}, resorting to the reduced set of variables $ (u_0,B_0) $, changes the order of derivatives. The hyperbolic status of XMHD (involving $ u_0 $ and $ B_0 $) becomes less accessible, and the well-posedness of XMHD is a quite  complicated issue  \cite{BC1}.

The equivalence between SLM and XMHD is now clear. From Corollary \ref{equiequi}, SLM reduces to ESLM (at least for the data under consideration). Moreover, there exists a complete correspondance between the solutions $\PP_{\! e} \,\cU_0 = {}^t(\bar{\varrho}_{e0}^0, \bar{\varrho}^0_{i0}, {}^t u_{e0}, {}^t u_{i0},0, {}^t B_0 )$ of ESLM and the
solutions $(u_0,B_0^*)$ to incompressible XMHD. As a matter of fact, we can pass from $ (u_{e0}, u_{i0},B_0 )$ to $(u_0,B_0^*)$ by using \eqref{eqn:CMV} and \eqref{defB0star}. In the opposite direction, we can deduce $ (u_{e0}, u_{i0},B_0 )$ from $(u_0,B_0^*)$ through the formulas \eqref{defB0star} and
\begin{align*}
  &u_{e0} =\sqrt{\underline{n} \,  m_e} \ u_0 -\underline{b}\ \sqrt{\underline{n}/m_e} \ \nabla \times
  (\id- \underline{b}\, \Delta)^{-1}B_0^\ast\,, \\
  & u_{i0}  =\sqrt{\underline{n} \,  m_i} \ u_0 +\underline{b} \  \sqrt{\underline{n}/m_i} \ \,  \nabla \times(\id- \underline{b}\, \Delta)^{-1}B_0^\ast\, .
\end{align*}
Coming back through \eqref{passagecUU0} to the initial state variables, we recover \eqref{importantrelation}. This concludes the proof of Theorem \ref{maintheo}.

%%%%%%%%%%%%%%%%%%%%%

\subsection{Other situations}
\label{Othersituations}
The purpose here is to complement the preceding discussion with some information about situations that fall more or less directly under the preceding rules.

%%%%%%%%%%%%%%%%%%%%%

\subsubsection{The case of a background with zero density}
\label{subs:zerodensity} The description of dilute plasmas can be achieved only by imposing $ \underline n = 0 $ (or $ \underline U = 0 $). The discussion depends heavily on the choice of pressure laws. As in \cite{GIP}, we work here with $ p_s ( {\rm n}_s) = P_s \ {\rm n}_s^2 / 2 $ where $ s \in \{e,i \} $ and  $  P_s \in \RR_+^* $.
Define $ \iota_e := -1 $ and $ \iota_i = 1 $. Then, we  change $ {\rm n}_s $ into $ \tilde {\rm n}_s := \sqrt{{\rm n}_s} $. By  developing the solution to
\eqref{TFEM} expressed in terms of $ {}^t (\tilde {\rm n}_{e\eps},\tilde {\rm n}_{i\eps},{}^t \tilde {\rm v}_{e\eps}, {}^t \tilde {\rm v}_{i\eps}, {}^t  \widetilde {\rm E}_\eps,  {}^t \widetilde{\rm B}_\eps)$\footnote{with the trivial change of unknowns $\tilde {\rm v}_{s\eps} = {\rm v}_{s\eps}$, for $s\in\{e,i\}$, $\widetilde {\rm E}_\eps = {\rm E}_\eps$, and $\widetilde{\rm B}_\eps={\rm B}_\eps$.} as indicated in \eqref{linearalo0}\footnote{This means that $ \tilde {\rm n}_{s\eps} = \eps \, q_{s\eps} $ so that $ {\rm n}_{s\eps} = \cO(\eps^2) $, whereas the other components $ \tilde {\rm v}_{s\eps} = \eps \, v_{s\eps}$, $ \widetilde {\rm E}_{\eps} = \eps \, E_\eps $ and $  \widetilde {\rm B}_{\eps} = \eps \, B_\eps $ are still $ \cO(\eps) $.}, the issue is to investigate the singular system
\[
\left \lbrace \begin{array}{ll}
\displaystyle 2 \, P_s \ \bigl (\part_t q_{s\eps} + (v_{s\eps}\cdot \nabla) q_{s\eps}\bigr ) + P_s \ q_{s\eps} \ \nabla \cdot v_{s\eps} = 0 \, , & s \in \{e,i\} \, , \smallskip \\
\displaystyle \frac{m_s}{2} \, \bigl ( \part_t v_{s\eps} + (v_{s\eps}\cdot \nabla) v_{s\eps}\bigr )+ P_s \ q_{s\eps} \ \nabla  q_{s\eps} = \frac{\iota_s}{2 \, \eps} \, E_\eps + \frac{\iota_s}{2} \, v_{s\eps} \times B_\eps \, , \quad & s \in \{e,i\} \, , \smallskip \\
\displaystyle \part_t E_\eps - \frac{1}{\eps} \ \nabla \times B_\eps = \eps \ (q^2_{e\eps} \, v_{e\eps} - q^2_{i\eps} \, v_{i\eps} ) \, , \smallskip\\
\displaystyle \part_t B_\eps + \frac{1}{\eps} \ \nabla \times E_\eps = 0 \, ,
\end{array}
\right. \]
together with $
\nabla \cdot B_\eps = 0 $ and $ \nabla \cdot E_\eps + \eps \, (q^2_{e\eps} - q^2_{i\eps})= 0 $.
Again, the asymptotic analysis of these equations falls under the scope of \cite{S07}. It is easy to see that the conditions $ E_0 = 0 $ and $ B_0 = 0 $ are the substitute\footnote{Note that the two conditions $ E_0 = 0 $ and $ B_0 = 0 $ are not required in \cite{GIP}. In fact, the situation examined in \cite{GIP} is within the FL model.} for $ \PP_{\! e} \, \cU_0 = \cU_0 $. Starting from there, the Slow Limit Model corresponding to the above framework is just made of a trivial electromagnetic field together with the following two independent compressible equations ($ s \in \{e,i\} $)
\[
\left \lbrace \begin{array}{l}
\displaystyle 2 \, P_s \ \bigl ( \part_t q_{s0} + (v_{s0}\cdot \nabla) q_{s0}\bigr ) + P_s \ q_{s0} \ \nabla \cdot v_{s0} = 0 \, , \smallskip \\
\displaystyle \frac{m_s}{2} \, \bigl (\part_t v_{s0} + (v_{s0}\cdot \nabla) v_{s0}\bigr ) + P_s \ q_{s0} \ \nabla  q_{s0} = 0 \, .
\end{array}
\right.
\]
For unprepared data (when $ E_0^0 \not \equiv 0 $ or $ B_0^0 \not \equiv 0 $), time oscillations on the electromagnetic field do persist. The Fast Limit Model contains the free Maxwell equations
\[
\left \lbrace \begin{array}{lll}
\displaystyle \part_\tau E_0^r - \nabla \times B^r_0 = 0 \, , \quad & \nabla \cdot E^r_0 = 0 \, , \quad & {E^r_0}_{\mid \tau = 0} = E^0_0 \, , \smallskip\\
\displaystyle \part_\tau B_0^r + \nabla \times E^r_0 = 0 \, , \quad & \nabla \cdot B^r_0 = 0 \, , \quad & {B^r_0}_{\mid \tau = 0} = B^0_0 \, .
\end{array}
\right.
\]

%%%%%%%%%%%%%%%%%%%%%

\subsubsection{The case of irrotational flows}
\label{subs:irroflows}
This is when the condition \eqref{Pausadercdt} is imposed at the initial time $ t = 0 $, and therefore it holds at any time $ t \in \RR_+^* $.  As a consequence, passing to the limit through \eqref{difflim}, we can assert that
\begin{equation}\label{PauPau}
    \delta \, \nabla \times v_{e0} = - \nabla \times v_{i0} = B_0 \, .
\end{equation}
As before, we consider separately the two situations $ \underline n \not = 0 $ and $ \underline n = 0 $. 

$ \bullet $ First, assume that $ \underline n \not = 0 $. Then, we must have \eqref{Ampereslaw}, which is the same as
\begin{equation*}
  \label{reapeat}
 \nabla \times B_0 = \underline{n} \ ( v_{i0} -v_{e0} ) \, .
\end{equation*}
It follows that
\[
\nabla \times (\nabla \times B_0) = - \Delta B_0 = \underline{n} \ ( \nabla \times v_{i0} - \nabla \times v_{e0} ) = -  \underline{n} \ (1+\updelta) \ B_0 / \updelta \, .
\]
On the Fourier side, this is the same as
\[
\bigl( \vert k \vert^2 + \underline{n} \ (1+\updelta) / \updelta \bigr) \ B_{0k} = 0 \, , \qquad \forall k \in \ZZ^3 \, ,
\]
so that $ B_0 \equiv 0 $. Coming back to \eqref{PauPau}, since both $ v_{e0} $ and $ v_{i0} $ are divergence free, we must also have $ v_{e0} (t,x) = \overline v_{e0} (t) $ and $ v_{e0} (t,x) = \overline v_{i0} (t) $. This means that $ u_0 (t,x) = \overline u_0 (t) $, while  XMHD reduces to $ \part_t \overline u_0 = 0 $, so that $ u_0(t,x) = \overline u_0^0 $. At the end, there remains a constant solution $ U_0 (t,x) = \overline {U}^0_0 $. In the perspective of our asymptotic analysis, for amplitudes of size $ \cO(\eps) $ and after a time $ \tau =\cO(1/\eps) $, the irrotational condition \eqref{PauPau} kills any form of non trivial  evolution, at least at the main $ \cO(\eps) $-order.

 $ \bullet $ Secondly, assume that $ \underline n = 0 $. From the relation \eqref{PauPau}, we can infer  that
$ \nabla \times v_{e0} = 0 $ and $ \nabla \times v_{i0} = 0 $. Coming back to Paragraph \ref{subs:zerodensity}, The Slow Limit Model reduces to two irrotational (potential) compressible flows.

%%%%%%%%%%%%%%%%%%%%%%%%%%%%%%%%%%%%%%%
%\section*{Acknowledgements}

%%%%%%%%%%%%%%%%%%%%%%%%%%%%%%%%%%%%%%%
%\section*{Data Availability Statement}
%Data sharing is not applicable to this article as no new data were created or analyzed in this study.

%%%%%%%%%%%%%%%%%%%%%%%%%%%%%%%%%%%%%%%
\appendix

\section{Justification of the passage to the limit}
\label{appendix:ME}
Here, we justify the  modulation equation \eqref{modu0}-\eqref{modu0ini0} and the convergence result \eqref{difflim}. The proof is inspired from the filtering unitary group method introduced by S. Schochet, see the survey articles \cite{Ala08,Ga,S07}, in continuation of \cite{BM61,JJLX,KM,K,JSX19,LM,MS, SV07, Sch94a,Sch94b}. Consider the expression $ U_\eps (t,x) $ defined by \eqref{defUepst}. We start with the extraction of \eqref{difflim}. Using standard energy estimates, valid in the  general case (prepared or not), we deduce that there exists a time $T \in \RR_+^* $ and a constant $C$  independent of $\varepsilon$ such that, given any $ s>5/2 $, we have
\begin{equation}
  \label{AME:estuniU}
  \|U_{\varepsilon}\|_{L^\infty([0,T];H^s)} \leq C \, , \qquad \forall \, \eps \in ]0,\eps_0 ] \, .
\end{equation}
This bound relies on the symmetry of \eqref{HSeps} and the fact that the singular operator $\varepsilon^{-1} \cL$ does not contribute to energy estimates since it is skew-adjoint and has constant coefficients.

In the  unprepared case, we cannot have an estimate for $\partial_t U_\varepsilon$ in $L^\infty([0,T];H^s)$ uniformly in $\varepsilon$, since $\partial_t  U_\varepsilon$ may not be bounded uniformly in $\varepsilon$ at time zero. Fortunately, using a convenient change of unknowns, it turns out that the $\mathcal{O}(1/\varepsilon)$ part in \eqref{HSeps} and thus in $\partial_t U_\varepsilon$ can be cancelled as follows. The new unknown $V_\varepsilon$ is defined by filtering out the fast time oscillations of $U_\varepsilon$ as
\begin{equation}
  \label{AME:CV}
  V_\varepsilon(t,x)= \cS(-t/\varepsilon) \, U_\varepsilon(t,x)\,, \qquad \cS(\tau):= e^{\tau \mathcal L} \, ,
\end{equation}
where $\{  \cS(\tau)\, ; \, \tau \in \RR\}$ is a one-parameter group of isometry in $H^s$ which is generated by the skew-adjoint linear operator $\mathcal{L}=\mathcal{L}(\myunderbar{U})$. Since the group $\cS$ is an isometry in $H^s$ ($\forall s\geq 0$), using  \eqref{AME:estuniU}, we obtain from \eqref{AME:CV} the uniform control
\begin{equation}
  \label{AME:estuniV}
  \|V_{\varepsilon}\|_{L^\infty([0,T];H^s)} \leq C \, , \qquad \forall \, \eps \in ]0,\eps_0 ] \, .
\end{equation}
Futhermore, using \eqref{HSeps} we obtain
\begin{equation}
  \label{AME:PTV}
  \partial_tV_\varepsilon(t,x)= \cS(-t/\varepsilon)\big(A(\varepsilon,\myunderbar{U},U_\varepsilon, D_x)U_\varepsilon +f(\varepsilon,\myunderbar{U},U_\varepsilon)\big)\,.
\end{equation}
Exploiting \eqref{AME:estuniU} and the isometry $\cS$ in $H^s$ ($\forall s\geq 0$), we get from \eqref{AME:PTV},
\begin{equation}
  \label{AME:estPTV}
  \|\partial_tV_{\varepsilon}\|_{L^\infty([0,T];H^{s-1})} \leq \mathcal C \, , \qquad \forall \, \eps \in ]0,\eps_0 ] \, ,
\end{equation}
where $\mathcal C$ is independent of $\varepsilon$. Using Ascoli--Arzela theorem, compact Sobolev embeddings and some interpolation inequalities, estimates \eqref{AME:estuniV} and \eqref{AME:estPTV} imply that there exists a subsequence of $V_\varepsilon$, still denoted by $V_\varepsilon$ for convenience, which converges in $\mathscr{C}([0,T];H^{\sigma})$, with $\sigma<s$, to a unique limit point denoted by $U_0=U_0(t,x)$, and such that
\[
U_0\in L^\infty([0,T]; H^s)\cap W^{1,\infty}([0,T]; H^{s-1})\,.
\]
We then introduce the two-time-scales profile $U_0^r(\tau,t,x):=\cS(\tau)U_0(t,x)$. Therefore, using the unitarity of the group $\cS$ in $H^s$ ($\forall\, s\geq 0$) and the strong convergence in $\mathscr{C}([0,T];H^{\sigma})$ of  $V_\varepsilon$ to $U_0$ as $\varepsilon$ vanishes, we obtain  for all $\sigma <s$,
\begin{eqnarray}
  \| U_\eps (t,x) - U_0^r (t/\eps,t,x) \|_{\mathscr{C}([0,T];H^{\sigma})} &=&  \| U_\eps (t,x) -\cS(t/\varepsilon) U_0(t,x) \|_{\mathscr{C}([0,T];H^{\sigma})} \nonumber\\
  &=& \| \cS(t/\varepsilon) (V_\eps (t,x) -U_0(t,x)) \|_{\mathscr{C}([0,T];H^{\sigma})}\nonumber \\
  &=& \|V_\eps (t,x) -U_0(t,x) \|_{\mathscr{C}([0,T];H^{\sigma})} \longrightarrow 0\,, \quad \mbox{ as } \ \varepsilon\rightarrow 0\,, \label{AME:SCV}
\end{eqnarray}
which is  \eqref{difflim}. The  profile $U_0^r$ is determined from $U_0$, while the time evolution of $ U_0 $ may be obtained by passing to the weak limit in the equation \eqref{AME:PTV}. Indeed, let  $\varphi \in \mathscr{C}^\infty_c([0,T[\times\TT^3)$ be a test function. Then, the weak formulation of \eqref{AME:PTV} is given by
\begin{multline}
  \label{AME:wf1}
  -\int_0^T dt \int_{\TT^3} dx\, V_\varepsilon \, \partial_t \varphi + \int_{\TT^3} dx\, V_\varepsilon^0 \,\varphi(0)
  = \int_0^T dt \int_{\TT^3} dx\,
   \varphi\, \Phi(\varepsilon,V_\varepsilon)\\ :=  \int_0^T dt \int_{\TT^3} dx\,
   \varphi(t,x)\, \cS(-t/\varepsilon)
  \big\{
  A(\varepsilon,\myunderbar{U},\cS(t/\varepsilon)V_\varepsilon(t,x), D_x)[\cS(t/\varepsilon)V_\varepsilon(t,x)]
  \\ + f(\varepsilon,\myunderbar{U},\cS(t/\varepsilon)V_\varepsilon(t,x))
  \big\} \,.
\end{multline}
Using \eqref{AME:estuniV}, we can pass to the limit in the left hand-side of \eqref{AME:wf1} and we obtain
\begin{multline}
  \label{AME:wf2}
  -\int_0^T dt \int_{\TT^3} dx\, U_0 \, \partial_t \varphi + \int_{\TT^3} dx\, U_0^0 \,\varphi(0)
  = \lim_{\varepsilon \rightarrow 0}\int_0^T dt \int_{\TT^3} dx\,
  \varphi\, \Phi(\varepsilon,V_\varepsilon)
  \\ =  \lim_{\varepsilon \rightarrow 0} \int_0^T dt \int_{\TT^3} dx\,
   \varphi(t,x) \, \cS(-t/\varepsilon) \big\{
  A(\varepsilon,\myunderbar{U},\cS(t/\varepsilon)V_\varepsilon(t,x), D_x)[\cS(t/\varepsilon)V_\varepsilon(t,x)]
  \\ + f(\varepsilon,\myunderbar{U},\cS(t/\varepsilon)V_\varepsilon(t,x))
  \big\} \,.
\end{multline}
Now, it remains to pass to the limit in the right-hand side of \eqref{AME:wf2}. Since $A$ and $f$ are smooth in their arguments, so is for $\Phi$. Since the function $\Phi$ depends smoothly of $\varepsilon$ and $V_\varepsilon$ (uniformly in $\varepsilon$), using the strong convergence \eqref{AME:SCV} of $V_\varepsilon$ to $U_0$ as $\varepsilon$ vanishes, we obtain that  $\Phi(\varepsilon,V_\varepsilon) - \Phi(0,U_0)$ converges to zero in $L^\infty([0,T];H^\sigma)$ for some $\sigma< s-1$, as $\varepsilon \rightarrow 0$. Therefore, we can replace $\Phi(\varepsilon,V_\varepsilon)$ by $\Phi(0,U_0)$ inside the limit  of \eqref{AME:wf2} to obtain
\begin{multline}
  \label{AME:wf3}
  -\int_0^T dt \int_{\TT^3} dx\, U_0 \, \partial_t \varphi + \int_{\TT^3} dx\, U_0^0 \,\varphi(0)
  = \lim_{\varepsilon \rightarrow 0} \int_{\TT^3} dx\, \int_0^T dt \, \psi(t/\varepsilon,t,x)
  \\ := \lim_{\varepsilon \rightarrow 0} \int_{\TT^3} dx
   \int_0^T dt\, \varphi(t,x)\,  \cS(-t/\varepsilon)\big\{
  A(0,\myunderbar{U},\cS(t/\varepsilon)U_0(t,x), D_x)[\cS(t/\varepsilon)U_0(t,x)]
  \\ + f(0,\myunderbar{U},\cS(t/\varepsilon)U_0(t,x))
  \big\} \,.
\end{multline}
To pass to the limit in \eqref{AME:wf3}, we use a type of Bogoliubov--Mitropolsky averaging theorem \cite{BM61, SV07} due to S. Schochet (Lemma~3.2 in \cite{Sch94a}) that we reproduce below for the sake of completeness.
\begin{lem}[Lemma~3.2 in  \cite{Sch94a}]
  \label{lem:scho}
  Let $(X, \|\cdot \|_X)$ be a Banach space. Suppose that $\psi(\tau, t, \cdot) $ is in $ L^\infty(\RR\times [0,T]; X)$, and that the following hypotheses hold:
  \begin{itemize}
  \item[(i)] For some $\upbeta\in \RR_+^*$,\
    $
      \|\psi(\tau,t, \cdot)- \psi(\tau,s, \cdot)\|_X \lesssim |t-s|^\upbeta\,.
    $ \smallskip
  \item[(ii)] The limit
    $
    \displaystyle\MM_\tau(\psi)(t,\cdot) := X\!\!-\!\!\! -\!\!\!\!\lim_{\cT \rightarrow +\infty} \frac{1}{\cT}\int_0^\cT d\tau\,\psi(\tau,t,\cdot)
    $
    exists. \smallskip
  \item[(iii)] $\displaystyle\lim_{\cT \rightarrow +\infty}\delta(\cT)=0$, with
    \[
    \delta(\cT):=\sup_{\cT'>\cT} \ \sup_{t\in[0,T]} \ \sup_{\upalpha \in \RR} \bigg \|\, \MM_\tau(\psi)(t,\cdot) - \frac{1}{\cT'}\int_{\upalpha}^{\upalpha+\cT'}\!\! d\tau \, \psi(\tau,t,\cdot) \bigg  \|_X\,.
    \]
  \end{itemize}
  Then $\MM_\tau(\psi) \in \mathscr{C}([0,T]; X)$,
  \[
  X\!\!-\!\lim_{\varepsilon\rightarrow 0} \int_0^t ds \, \psi(s/\varepsilon, s, \cdot) = \int_0^t ds \, \MM_\tau(\psi)( s, \cdot)\,,
  \]
  and this limit converges uniformly for $t\in[0,T]$.
\end{lem}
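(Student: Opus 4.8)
The plan is to prove both conclusions by a discretization (Riemann-sum) argument that separates the slow variable $t$ from the fast variable $\tau = s/\varepsilon$, using hypothesis (i) to freeze the slow dependence on short intervals and hypothesis (iii) to replace the resulting long fast-time averages by $\MM_\tau(\psi)$. First I would record the continuity of the mean value. Writing $A_\cT(t) := \frac{1}{\cT}\int_0^\cT \psi(\tau,t,\cdot)\,d\tau$, hypothesis (i) gives $\|A_\cT(t) - A_\cT(s)\|_X \lesssim |t-s|^\upbeta$ uniformly in $\cT$, while hypothesis (ii) says $A_\cT(t) \to \MM_\tau(\psi)(t,\cdot)$ in $X$ as $\cT \to +\infty$; letting $\cT \to +\infty$ in the equicontinuity bound and using continuity of the norm yields $\|\MM_\tau(\psi)(t,\cdot) - \MM_\tau(\psi)(s,\cdot)\|_X \lesssim |t-s|^\upbeta$, so $\MM_\tau(\psi) \in \mathscr{C}([0,T];X)$ (it is in fact H\"older of exponent $\upbeta$). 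In particular $\MM_\tau(\psi)$ is bounded on $[0,T]$.

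Next, fix $t \in [0,T]$ and a step $h > 0$, and partition $[0,t]$ by $t_j := jh$ for $0 \le j \le N-1$ with $N := \lceil t/h \rceil$, the last interval being possibly shorter. On a full interval $[t_j,t_{j+1}]$ I would first replace the slow argument $s$ by $t_j$: by (i), $\bigl\|\int_{t_j}^{t_{j+1}}(\psi(s/\varepsilon,s,\cdot) - \psi(s/\varepsilon,t_j,\cdot))\,ds\bigr\|_X \lesssim h^{1+\upbeta}$. The change of variable $\tau = s/\varepsilon$ then turns the remaining term into a long-window fast average,
\[
\int_{t_j}^{t_{j+1}} \psi(s/\varepsilon,t_j,\cdot)\,ds = \varepsilon \int_{\upalpha_j}^{\upalpha_j + h/\varepsilon} \psi(\tau,t_j,\cdot)\,d\tau = h \cdot \frac{1}{\cT'}\int_{\upalpha_j}^{\upalpha_j + \cT'} \psi(\tau,t_j,\cdot)\,d\tau,
\]
with $\upalpha_j := t_j/\varepsilon$ and $\cT' := h/\varepsilon$. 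Provided $\cT' > \cT_0$, the definition of $\delta$ in (iii) bounds the difference between this average and $\MM_\tau(\psi)(t_j,\cdot)$ by $\delta(\cT_0)$, so each full interval contributes $\int_{t_j}^{t_{j+1}} \psi(s/\varepsilon,s,\cdot)\,ds = h\,\MM_\tau(\psi)(t_j,\cdot) + \cO(h^{1+\upbeta}) + \cO(h\,\delta(\cT_0))$ in $X$. Finally I would reassemble the Riemann sum: by the H\"older continuity of $\MM_\tau(\psi)$ just established, $\sum_j h\,\MM_\tau(\psi)(t_j,\cdot) = \int_0^t \MM_\tau(\psi)(s,\cdot)\,ds + \cO(T h^\upbeta)$, while the last (short) interval contributes only $\cO(h)$ via the $L^\infty$ bounds on $\psi$ and $\MM_\tau(\psi)$. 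Summing over the at most $T/h + 1$ intervals gives
\[
\Bigl\| \int_0^t \psi(s/\varepsilon,s,\cdot)\,ds - \int_0^t \MM_\tau(\psi)(s,\cdot)\,ds \Bigr\|_X \lesssim T\,h^\upbeta + h + T\,\delta(\cT_0),
\]
valid whenever $h/\varepsilon > \cT_0$, with all implied constants independent of $t$ and $\varepsilon$.

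The conclusion then follows by choosing the three scales in the right order, which is the only genuinely delicate point. Given $\eta > 0$, I would first use (iii) to pick $\cT_0$ with $T\,\delta(\cT_0) < \eta/2$; then pick $h$ small enough that the $\cO(T h^\upbeta + h)$ term is $< \eta/2$; and only then let $\varepsilon \to 0$, so that for $\varepsilon < h/\cT_0$ the constraint $\cT' = h/\varepsilon > \cT_0$ holds and the whole right-hand side is $< \eta$. The main obstacle is precisely this three-scale bookkeeping: $h$ must be chosen independently of $\varepsilon$ to control the freezing error, while the averaging threshold $\cT_0$ must be overtaken by $h/\varepsilon$, which forces the order ``$\cT_0$ first, then $h$, then $\varepsilon$.'' The other subtle ingredient is that the fast phases at the left endpoints, $\upalpha_j = t_j/\varepsilon$, are essentially arbitrary real numbers; this is exactly why the supremum over the starting point $\upalpha$ in hypothesis (iii) is indispensable. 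Since none of the bounds above depend on $t$ and the number of subintervals is at most $T/h + 1$, the estimate is uniform in $t \in [0,T]$, which yields the asserted uniform convergence.
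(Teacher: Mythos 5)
Your proof is correct. Note, however, that the paper does not prove this lemma at all: it reproduces the statement of Lemma~3.2 from \cite{Sch94a} ``for the sake of completeness'' and uses it as a black box, so there is no in-paper argument to compare against. Your discretization argument --- freezing the slow variable on subintervals of length $h$ via (i), rescaling each subinterval into a fast-time window average of length $\cT'=h/\varepsilon$ starting at the arbitrary phase $\upalpha_j=t_j/\varepsilon$ (which is exactly where the supremum over $\upalpha$ in (iii) is needed), and then choosing the scales in the order $\cT_0$, then $h$, then $\varepsilon$ --- is the standard proof of this Bogoliubov--Mitropolsky-type averaging result and is essentially Schochet's original argument. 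The preliminary step establishing H\"older continuity of $\MM_\tau(\psi)$ by passing to the limit in the equicontinuity bound on $A_\cT$ is also correct and is needed both for the first conclusion and for the Riemann-sum reassembly, so the write-up is complete as it stands.
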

If the function $\psi$, defined by \eqref{AME:wf3}, satisfies all the hypotheses of Lemma~\ref{lem:scho} with $\upbeta=1$ and $X=H^{s-1}$, then applying  Lemma~\ref{lem:scho} to \eqref{AME:wf2} we obtain
\begin{multline*}
  \label{AME:wf4}
  -\int_0^T dt \int_{\TT^3} dx\, U_0 \, \partial_t \varphi + \int_{\TT^3} dx\, U_0^0 \,\varphi(0)
  \\ =  \int_0^T dt  \int_{\TT^3} dx
  \, \varphi(t,x) \, \MM_\tau \Big[ \cS(-\tau)\big\{
  A(0,\myunderbar{U},\cS(\tau)U_0(t,x), D_x)[\cS(\tau)U_0(t,x)]
  \\ + f(0,\myunderbar{U},\cS(\tau)U_0(t,x))
  \big\} \Big]\,,
\end{multline*}
which is the weak formulation of the modulation equations \eqref{modu0}-\eqref{modu0ini0}. In fact, from the conclusion of  Lemma~\ref{lem:scho} and the regularity of $U_0$, this limit,  i.e.  the modulation equations \eqref{modu0}-\eqref{modu0ini0}, holds in $L^\infty([0,T];H^{s-1})$ with $s>5/2$. Therefore, it remains to verify the hypotheses of Lemma~\ref{lem:scho}, with $\upbeta=1$ and $X=H^{s-1}$ for $s>5/2$.

Since $A$ and $f$ are smooth in their arguments, and because
\[
U_0\in L^\infty([0,T]; H^s)\cap W^{1,\infty}([0,T]; H^{s-1}) \, , \qquad s>5/2 \, ,
\]
the continuity of the map $\cS$ in $H^r$ with $r\geq 0$, and the algebra property of $H^r$ for $r>3/2$, lead to  $(i)$ with  $\upbeta=1$ and $X=H^{s-1}$. The remaining hypotheses $(ii)$--$(iii)$ of Lemma~\ref{lem:scho} are  typical properties of Almost-Periodic functions (AP) functions. Indeed, for any Banach space $Y$, the set ${\rm AP}(\RR_\tau; Y)$ of $Y$-valued almost-periodic functions is the set of functions $h\in \mathscr{C}(\RR_\tau; Y)$ for which the following defining property of AP functions holds \cite{Cor89}.

{\it A property of AP functions.} For any number $\eta >0$, there exists  a number $\ell(\eta)>0$ such that any interval on $\RR_\tau$ of length $\ell(\eta)$ contains at least one point $p$ such that
\[ \| h(\tau+ p,\cdot)-h(\tau,\cdot)\|_Y \leq \eta \, , \qquad \forall \, \tau\in\RR \, . \]

The linear properties of ordinary $\RR$-valued AP functions remain valid for AP functions with values in Banach spaces \cite{Cor89}. In particular, the mean value
$
Y\!\!-\lim_{\cT \rightarrow +\infty} \frac{1}{\cT}\int_{\upalpha}^{\upalpha+\cT} d\tau\,h(\tau,\cdot)
$
of almost-periodic functions exists and is independent of $\upalpha$, and the convergence is uniform in $\upalpha \in \RR$. Therefore, properties $(ii)$--$(iii)$ of Lemma~\ref{lem:scho} hold if $\psi \in {\rm AP}(\RR_\tau; L^\infty([0,T];X))$; this is what remains to be shown with $X=H^{s-1}\hookrightarrow L^\infty$ for $s>5/2$.

Since the restriction of $\cS $ to every Fourier mode is periodic, starting from the information  $U_0 \in L^\infty([0,T]; H^s)\cap W^{1,\infty}([0,T]; H^{s-1})$, we obtain
\[
\cS(\tau)U_0 \in {\rm AP} \bigl(\RR_\tau; L^\infty([0,T]; H^s)\cap \mathscr{C}([0,T];H^\sigma) \bigr) \, , \qquad  \forall \sigma<s \, .
\]
It follows that
\[
D_x\cS(\tau)U_0 \in {\rm AP}\bigl(\RR_\tau; L^\infty([0,T]; H^{s-1}) \bigr) \, .
\]
Since $A_j$ and $f$ are smooth functions  in their arguments, their compositions with $\cS(\tau)U_0 $ belong to the space ${\rm AP}(\RR_\tau; L^\infty([0,T]; H^{s}))$. From the stability of the multiplication in ${\rm AP}(\RR_\tau; L^\infty([0,T]; H^{s-1}))$ for all $s>5/2$, we can assert that $ A(0,\myunderbar{U},\cS(\tau)U_0, D_x)\cS(\tau)U_0 $ is in $ {\rm AP}(\RR_\tau; L^\infty([0,T]; H^{s-1}))$.

Finally, since $\cS(\tau) h \in {\rm AP}(\RR_\tau; L^\infty([0,T]; H^{s-1}))$ for all $ h \in {\rm AP}(\RR_\tau; L^\infty([0,T]; H^{s-1}))$ as well as the stability of the addition in ${\rm AP}(\RR_\tau; L^\infty([0,T]; H^{s-1}))$, we obtain that $\psi $ is in $ {\rm AP}(\RR_\tau; L^\infty([0,T];H^{s-1}))$, which completes the verification of the hypotheses in  Lemma~\ref{lem:scho}.

%%%%%%%%%%%%%%%%%%%%%%%

\section{The origins  of XMHD}
\label{s:appendix}
The study of XMHD lies at the interface between physics and mathematics. In the domain of space and laboratory plasmas, it is viewed as a major contemporary issue \cite{SCD09}, linked for instance to X-point collapse \cite{AL} or to the modeling of solar wind \cite{ALM}. Nevertheless, in the mathematical community, this theme is being at this point largely unknown.

The aim of this appendix is to build a bridge concerning XMHD between these two scientific disciplines. On the one hand, we would like to explain why our derivation of XMHD comes as a valuable complement to previous efforts \cite{AKY,DML16,GP04} to lay the foundation for XMHD theory. On the other hand, we would like to make the XMHD topic more accessible and more attractive to mathematicians. There are indeed several challenging  open problems in this direction.

In this appendix, we better identify the role of XMHD (as compared to  MHD); we  recall the different ways to reach XMHD; we  clarify the advantages\footnote{Our method of derivation of XMHD from EMTF applies within a more general framework than before. To a certain extent, it is also more intrinsic and more informative.} arising from our asymptotic analysis; and we get a better grasp of a whole range of phenomena often regarded as falling within ``plasma turbulence''.

The expression ``plasma turbulence'' is used in this text to designate movement patterns (including the Hall and inertial effects as well as fast magnetic reconnection) arising at inertial scales, and standing outside MHD rules.

First and foremost, we have to identify the scope of XMHD application, which stems from a specialization of the various regimes contained in EMTF. The EMTF equations depend on numerous plasma parameters \cite{GIP}. Written in dimensionless variables, they reveal the significance of the normalized electron and ion skin depths $ d_e $ and $ d_i $ given by \eqref{eqn:cteXMHD}, with in general $ 0 < d_e \lesssim d_i \lesssim 1 $. In this dimensionless representation:
\begin{itemize}
 \item [$-$] MHD may be a reasonable approximation as long as the scale $ \ell $ of observation is of size one, that is $ \ell \sim 1 \, $;
 \smallskip
 \item [$-$] Hall MHD entails that $ \ell \sim d_i \, $;
 \smallskip
 \item [$-$] XMHD is essential in the inertial regime, that is when $ \ell \sim d_e $, that is where the distinction between the inertia of the two species is most obvious\footnote{In other words, assuming that $ d_e = d_i = 0 $, XMHD coincides with MHD. For $ 0 < d_i \ll 1 $ and low frequencies, Hall MHD can be viewed as a slight modification of MHD. Moreover, if $ 0 < d_e \ll d_i $, the same applies to XMHD with respect to Hall MHD (and therefore MHD). Of course, at high frequencies $ \sim 1/d_i $ or $ \sim 1/d_e $, the Hall and inertial effects become predominant  (and MHD  is no longer relevant).}.
\end{itemize}

   At scales $ \ell \sim d_e $, which are in an intermediary zone between the macroscopic perception of ideal MHD and the mesoscopic precision of kinetic theory\footnote{XMHD does not take into account kinetic phenomena (such as Landau damping or whistler waves) or dissipative effects, which are supposed to occur for $ \ell \ll d_e $.}, the understanding of plasma dynamics requires specific tools and adapted descriptions \cite{SCD09}. The XMHD model is quite suitable for that purpose because it captures most two-fluid impacts arising in this context, leading to numerous interesting applications.

XMHD has been discovered in 1959 by V.R. L\"ust \cite{Lu} through formal computations based on the EMTF system. The usual two-fluid approximations  leading to XMHD require the neutrality ($ {\rm n}_e = Z \, {\rm n}_i $), while neglecting some contributions with $ \updelta := Z m_e/m_i $ in factor. More precisely, Hall and XMHD have been  obtained by retaining respectively zero-order and first-order terms in powers of $ \updelta $.

Such  simplifications are not at all required in our text. Instead, they are replaced by the smallness of the  perturbation $ (\eps \ll 1 $). As stated in Theorem \ref{maintheo}, it suffices to retain \eqref{neutral} and to implement prepared data in order to recover XMHD as a limit model. In other words, the Wentzel--Kramers--Brillouin  hierachy (in powers of $ \eps $) offers an alternative to justify the XMHD model  (without implementing artificial assumptions), as a  universal behavior that  appears after a certain period of time ($ \tau \sim 1/ \eps $ or $ t \sim 1 $). Moreover, it sheds new light on XMHD because it provides the SLM reformulation of XMHD where it is possible to keep track of the 14  components (of $ {\rm U} $) and to maintain the symmetric structure of the original quasilinear EMTF system.

In Subsection \ref{sub:fluid}, we  sketch out the standard derivation of XMHD from EMTF. This facilitates a better understanding of the reduction process\footnote{One option is to suppress by {\it ad hoc} assumptions well chosen terms inside EMTF. It is more complicated to show that such contributions disappear from the time evolution inside lower order terms  when $ \eps \rightarrow 0 $.} yielding XMHD. At the same time, this helps identify the physical meaning of the terms which are incorporated when passing from the  MHD description to the XMHD one.

The Hamiltonian formalism of XMHD has recently gained a growing interest \cite{AKY,DML16,KLMLW,KM}. In Subsection \ref{sub:hamiltonian}, we briefly describe this interesting direction of research and its important outcomes \cite{AL,ALM,MLM,SMA}.

XMHD can also be addressed from the perspective of mathematics. However, while Hall effects have already been investigated \cite{DLT,JO} in this way, the analysis of inertial aspects has been little explored so far. In Subsection \ref{sub:math}, we mention the principal obstacles explaining this situation. The preceding facets of XMHD, fluid in Subsection \ref{sub:fluid}, Hamiltonian in Subsection \ref{sub:hamiltonian}, and related to  mathematics in  Subsection \ref{sub:math}, provide an entry point for the study of ``plasma turbulence''. As a matter of fact, in the  inertial regime, the fluid motions share  characteristics, denoted below C1), C2) and C3), with turbulent flows. Let us explain rapidly (details will be  provided in the subsections) how these features occur concomitantly in the XMHD context.

\begin{itemize}
\item[C1)] {\it Fluctuations around a mean flow} (Subsection \ref{sub:fluid}). In our context, the ``average'' flow is induced by the center of mass velocity\footnote{The field $ {\rm u} $ takes here the place of the usual  time averaging, spatial averaging or ensemble averaging. It indicates the mean behavior of the two-fluid flow, without going into the  details carried by $ {\rm v}_e $ and $ {\rm v}_i $, without looking at the ``fluctuating parts'' of the velocities, which are $ {\rm v}_i - {\rm u} $ and $ {\rm v}_e - {\rm u} $.}
\begin{equation}
  \label{centerofmass}
        {\rm u} := (m_e \, {\rm n}_e \, {\rm v}_e + m_i \, {\rm n}_i \, {\rm v}_i)/ (m_e\, {\rm n}_e + m_i \, {\rm n}_i) \,.
\end{equation}
The equation on $ {\rm u} $ reveals the presence of a two-fluid Reynolds stress $ \mathrm R $,  defined at the level of \eqref{reytensor}. Through quasineutrality and then through the  magnetostatic assumption (in physics) or the prepared condition (in mathematics), the XMHD approach furnishes an interpretation of $ \mathrm R $ first in terms of the total current density and secondly in terms of $ \nabla \times B $. This is a sort of electromagnetic version of the two-fluid velocity tensor $ \mathrm R $. From this derives the contribution\footnote{The constitutive relation \eqref{lienBB*2} furnishes $ B^* \times (\nabla \times B) = B \times (\nabla \times B) - (d_e^2/\myunderbar{\rho})\Delta B \times (\nabla \times B)$. The first part, namely  $ B \times (\nabla \times B) $,  is common in ideal MHD \cite{BC2,JJLX,JSX19}. But the second is not. It comes from $ {\rm R} $. It takes into account the underlying presence of two distinct species.} $ B^* \times (\nabla \times B) $ in the first equation of  \eqref{syssimpli}.
In addition, resorting to the generalized Ohm's law allows to eliminate the electric field $ E $, and to extract an equation\footnote{Compare the second equation of \eqref{syssimpli} with those (on $ B $) inside  \cite{BC2,JJLX,JSX19} to realise that many terms (which reflect the richness of two-fluid effects) have been added.} on $ B^* $. In \cite{BC1}, it is proved that the well-posedness of \eqref{syssimpli} can be recovered by using the state variables $ \nabla \times u $ and $ B^* $, which lead to hyperbolicity. The fact that XMHD gives a central role to the vorticity  is already indicative of its  deep relation to turbulence. \smallskip
\item[C2)] {\it Statistical physics and power laws} (Subsection \ref{sub:hamiltonian}). The Hamiltonian formalism  \cite{AKY,DML16, KMo} is an alternative way to approach XMHD. By this way, statistical theory finds application in XMHD with specific cascades \cite{MLM} and energy spectra involving unusual power-law indexes \cite{ALM}.\smallskip
\item[C3)] {\it Instabilities} (Subsection \ref{sub:math}). In general, we find $ d_e \ll d_i $. If $ d_ e $ is neglected, the  destabilizing Hall effects \cite{ChaeW,JO} are predominant (at least if the focus is on lengths of size $ d_i $), which  might give the impression of uncontrolled instabilities (leading to ill-posedness)\footnote{The existence of strong amplification mechanisms due to the Hall contribution is fairly recent \cite{ChaeW,JO}}. However, beyond this point, the inertial effects come to  restore the stability \cite{BC1}. The crucial thing is that the Hall term\footnote{The Hall term appears when replacing $ u $ by $ u - (d_i/\myunderbar{\rho})\nabla \times B $ in the second line of \eqref{syssimpli}.} is often added and isolated into MHD equations, while it should be considered  as a  contribution among others (in competition with other effects) inside the two-fluid equations. In the inertial range, the hyperbolic structure is not inherited from MHD or Hall MHD. Instead, it comes from EMTF through SLM. Thus, the description of wave propagation in terms of magnetosonic and Alfv\'en waves is no more  adapted.
Other dispersion laws come into play\footnote{The standard MHD dispersion relations are just approximations of those of XMHD. While they provide meaningful predictions at low frequencies, they strongly differ from reality at high frequencies.} \cite{BC1,SMA}, and they become fundamental at high frequencies to understand how information can propagate.

Now, the transition between the ideal and inertial regimes can be brutal near some positions, with an impression of unpredictability. Still, the explanation can remain deterministic by changing MHD into XMHD.
\end{itemize}

To our knowledge, the link between the incompressible (or the low Mach number limit) of EMTF and  XMHD had never been established before. It is therefore interesting to revisit the preceding approaches in light of our alternative viewpoint. This is what we do in the next subsections.

%%%%%%%%%%%%%%%%%%%%%%%

\subsection{XMHD viewed from two-fluid mechanics}
\label{sub:fluid}
In this subsection, we present the  various approximations which led physicists to consider XMHD, and we put them into perspective with our less restrictive framework. To simplify the presentation, from now on, we work with $ Z = 1 $.
The starting point is the system \eqref{TFEM}. Summing the mass  densities, we get the total mass density $ \, \rho := \rho_e + \rho_i \, $ where $ \, \rho_s := m_s \, {\rm n}_s$. Summing the momenta, we can define the center of mass velocity $ {\rm u} $ as indicated in \eqref{centerofmass}. By construction, these quantities must satisfy the net continuity equation
\begin{equation}
  \label{netcon}
  \partial_t \rho \ \, + \, \nabla \cdot (\rho \, {\rm u} ) \ \,  = 0 \,.
\end{equation}
Summing charge densities, current densities and pressure tensors, we obtain
\[
\begin{array}{lll}
  $--\ $\text{the total charge density:} &
  \sigma := \sigma_e + \sigma_i \,, \quad & \sigma_e := -\,{\rm n}_e \,, \ \ \sigma_i := {\rm n}_i\,, \\
  $--\ $\text{the total current density:} &
         {\rm J} := {\rm J}_e + {\rm J}_i \,, &  {\rm J}_s := \sigma_s \, {\rm v}_s \,, \\
  $--\ $\text{the total pressure tensor:} &
    {\rm P} := {\rm P}_e + {\rm P}_i \,, & {\rm P}_s := {\rm p}_s ({\rm n}_s) \, \id \,.
\end{array}
\]
Summing the momentum  equations of both species, we find the net momentum equation
\begin{equation}
  \label{netmomentum}
  \partial_t (\rho \, {\rm u} ) + \nabla \cdot (\rho \ {\rm u} \otimes {\rm u} + {\rm R} + {\rm P} ) = \sigma \, {\rm E} + {\rm J} \times {\rm B} \,,
\end{equation}
where $ {\rm R} $ is the ``Reynolds stress'' (or the ``two-fluid diffusion velocity tensor'') given by
\begin{equation}
  \label{reytensor}
  \qquad \quad {\rm R} := \sum_{s \in \{e,i\}} \! \!\! \rho_s \ {\rm v}_s \otimes {\rm v}_s - \rho \ {\rm u} \otimes {\rm u} = \rho_e \, {\rm w}_e \otimes {\rm w}_e + \rho_i \, {\rm w}_i \otimes {\rm w}_i , \qquad {\rm w}_s := {\rm v}_s- {\rm u} \,.
\end{equation}
The system \eqref{netcon}-\eqref{netmomentum} is not closed, due to the presence of $ {\rm R} $.
Some ``anomalous'' diffusion  is  sometimes  proposed \cite{SCD09} to model the impact of $ {\rm R} $. Another    alternative, which is more in the spirit of the pioneering work of L\"ust \cite{Lu}, is to impose special assumptions, as decribed below in Paragraphs \ref{subsub:neutrality}, \ref{subsub:ohm} and \ref{subsub:magnetostatic}.

%%%%%%

\subsubsection{The neutrality assumption}
\label{subsub:neutrality}
The first approximation usually made to extract XMHD is to impose a complete neutrality, that is
\begin{equation}
  \label{completeneutrality}
 \sigma  = {\rm n}_i-{\rm n}_e = 0 \,.
\end{equation}
By contrast, in our setting, the fluid is quasineutral (this hypothesis holds on distances larger than the Debye length). This is due to \eqref{neutral} which furnishes
\begin{equation}
  \label{quasine}
 \sigma_\eps = \eps \,( n_{i\eps} - n_{e\eps}) = \mathcal{O}(\eps) \,.
\end{equation}
Observe that
\[
\int_{\TT^3} (n_{i\eps} - n_{e\eps}) (t,x) \, dx = \int_{\TT^3} (n_{i0}^0 - n_{e0}^0) (x) \, dx +o(1) \,.
\]
In the unprepared case (when $ n_{i0}^0 $ may be distinct from $  n_{e0}^0 $), the information \eqref{quasine} cannot be improved into some $ o(\eps) $. It remains true, however, that the total charge density $ \sigma_\eps $ vanishes rapidly when $ \eps $ goes to zero: the condition \eqref{completeneutrality} is always recovered approximately in the limit process.
Assuming \eqref{completeneutrality}, we can define $ {\rm n} := {\rm n}_e = {\rm n}_i $. Then, from the relations $ {\rm J} =  {\rm n} \, ({\rm w}_i-{\rm w}_e) $ and $ m_i \, {\rm w}_i + m_e \, {\rm w}_e = 0 $, we can express the relative velocities $ {\rm w}_s $ and $ {\rm R} $ in terms of $ {\rm J} $ according to
\[
  {\rm w}_i = m_e \,  \frac{{\rm J}}{\rho}, \qquad {\rm w}_e = m_i \, \frac{\rm J}{\rho} , \qquad {\rm R} =  m_e \, m_i \, \frac{{\rm J} \otimes {\rm J}}{ \rho} \,.
\]
Taking all these into account, there remains
\begin{equation}
  \label{onyva}
  \partial_t (\rho \, {\rm u} ) + \nabla \cdot \Bigl(\rho \ {\rm u} \otimes {\rm u} + m_e \, m_i \,
  \frac{{\rm J} \otimes {\rm J}}{ \rho} + {\rm P} \Bigr) =  {\rm J} \times {\rm B} \,.
\end{equation}
Since the total current density $ {\rm J} $ does not depend only on $ (\rho,{\rm u},{\rm E},{\rm B}) $, the preceding closure problem remains unsolved. Furthermore, some information is lost when replacing the two first equations of \eqref{TFEM} by their sum. It is necessary to also consider the difference of these two  equations, as in the next paragraph.

%%%%%%

\subsubsection{The Generalized Ohm's Law (GOL)}
\label{subsub:ohm}
We refer to \cite{KMo}, and especially to Table I in \cite{KMo},  for a discussion about GOL. By subtracting the two momentum equations of \eqref{TFEM} and using \eqref{eqn:cteXMHD}, we obtain
\begin{equation}
  \label{Ohmlaw}
 \begin{array}{rl}
    {\rm E} \,+\, {\rm u} \times {\rm B} \! \! \!  & \displaystyle = {- \, \frac{d_i}{\rho} \ \nabla {\rm p} \, +\,  d_i \ \frac{\rm J}{\rho} \times {\rm B}} \  {- \ d_i \, d_e^2 \ \bigg( \frac{\rm J}{\rho} \cdot \nabla \bigg) \frac{\rm J}{\rho}} \medskip \\
    \ & \displaystyle \quad {+ \ d_e^2 \, \bigg \lbrack \partial_t \bigg( \frac{\rm J}{\rho} \bigg) + ({\rm u} \cdot \nabla)  \bigg( \frac{\rm J}{\rho} \bigg) +  \bigg( \frac{\rm J}{\rho} \cdot \nabla\bigg) {\rm u}
   \bigg \rbrack} \,.
   \end{array}
\end{equation}
The relation \eqref{Ohmlaw} is another way to reveal the role of the two independent dimensionless  parameters ${d_i}$ (for the {\underline{i}}on {\it skin depth}) and ${d_e} $ (for the {\underline{e}}lectron {\it skin depth}), with in general $ 0 < d_e \lesssim d_i \lesssim 1 $. Then, we can again identify three different regimes. Ideal MHD occurring when $ 0 ={d_e} = {d_i} $; Hall MHD when $ 0 ={d_e} < {d_i} $; and  Inertial MHD when $ 0 < {d_e} \lesssim {d_i} $, which is the most complete regime. For that reason, XMHD (instead of MHD) should be considered as the correct paradigm for a simplified description of plasma dynamics.

The formula \eqref{Ohmlaw} lays emphasis on the global Lorentz force. Ideal MHD is pertinent when this force is zero; Hall MHD occurs when it undergoes a slight modification (with the inclusion of the first and second terms of the right-hand side of \eqref{Ohmlaw}); and XMHD takes over when it is complemented by all remaining terms (i.e. the third and fourth terms of the right-hand side of \eqref{Ohmlaw}).
The presentation \eqref{Ohmlaw} gives little weight to the implementation in the right-hand side of a time derivative, namely $ d_e^2 \, \partial_t ( {\rm J}/\rho ) $ with $ d_e^2 \ll 1 $. In coherence with \eqref{TFEM} and more in line with our approach, another option is to restore the dynamical viewpoint, that is to look at \eqref{Ohmlaw} as a penalized evolution equation on $ {\rm J}/\rho $.

Seen from this perspective, the MHD and Hall MHD situations correspond to stable manifolds of the dynamical system. And the flow responds to the departure from MHD or Hall equilibria by creating very rapid time variations,  i.e. time derivatives of size $ d_e^{-2} \gg 1 $. Note that this phenomenon is partly damped when starting from prepared data.

The GOL is an essential tool in the usual derivations \cite{AKY,DML16,KLMLW,KMo,LTHG,Lu} of XMHD. Thus, one might wonder why the GOL does not appear in our proof ? In fact, it does play an important role in the background for reasons that are detailed below:
\begin{itemize}
\item An alternative approach to the filtering unitary group method is to perform a more complet WKB calculus. By expanding the electromagnetic field $ ({\rm E}, {\rm B}) $ into a power series (in powers of $ \eps $), say $ \eps \, (E_0,B_0) + \eps^2 \, (E_1,B_1) + \cdots $, a substitute for \eqref{Ohmlaw} can be found. Roughly speaking, this amounts to replace $ ({\rm E}, {\rm B}) $ inside \eqref{Ohmlaw} by the first corrector $ (E_1,B_1) $.
\item In our asymptotic analysis, the impact of the GOL is hidden behind the action of the projector $ \PP $, which combines the different equations properly in order to produce the right hand side of \eqref{Ohmlaw}, while the left hand side is eliminated in the case of prepared data.
\end{itemize}

The next step is to express $ {\rm J} $ in terms of $ {\rm B} $, and to remove $ {\rm E} $ by exploiting the GOL.

%%%%%%

\subsubsection{The magnetostatic assumption}
\label{subsub:magnetostatic}
This is when the displacement current is negligible. Taking into account  Amp\`ere--Maxwell law, this means that
\begin{equation}
  \label{magnetos}
  \nabla \times {\rm B} - {\rm J} = 0\,. 
\end{equation}
By contrast, in our framework, the last equation of \eqref{TFEM} gives rise to
\[
\partial_t E_\eps + \, J_\eps = \frac{1}{\eps} \
\Big( \nabla \times B_\eps - \underline{n} \, (v_{i\eps}-v_{e\eps}) \Big) , \qquad J_\eps := n_{i \eps} \, v_{i\eps} -  n_{e \eps} \, v_{e\eps}  \,.
\]
The singular term (with $ 1/\eps $ in factor) disappears in the case of prepared data, when
\begin{equation}
  \label{premagn}
  \nabla \times B_0^0 - \, \underline{n}\, (v_{i0}^0-v_{e0}^0) = 0 \,.
 \end{equation}
In other words, under \eqref{premagn}, the equation \eqref{magnetos} is satisfied modulo non trivial error terms. It is evident that the condition \eqref{magnetos} is much more restrictive than \eqref{premagn}. Now, assuming \eqref{magnetos}, the field $ {\rm J} $ becomes a function of $ \nabla \times {\rm B} $.  It is a clear advantage. From there, the identity \eqref{Ohmlaw} can be reformulated into
\[
  {\rm E} = d_e^2 \ \partial_t \big((\nabla \times {\rm B}) / \rho\big) + \cE (\rho,{\rm u},{\rm B}) \,,
\]
where $ \cE (\rho,{\rm u},{\rm B})$ stands for a nonlinear partial differential operator on $ (\rho,{\rm u},{\rm B}) $, involving only spatial derivatives. After substitution in the Maxwell--Faraday equation, there remains
\begin{equation}
  \label{onyvaa}
  \partial_t \big (\underbrace{ {\rm B} + {d_e^2 \ \nabla \times ( \nabla \times {\rm B} / \rho )}}_{= {\rm B}^*}  \big) + \nabla \times \mathcal E(\rho,{\rm u},{\rm B}) = 0 \,.
\end{equation}
This explains the origin of the ``dynamical'' state variable $ {\rm B}^* $, sometimes called ``drifted'' magnetic field, which is involved everywhere in the articles  \cite{AKY,DML16,KLMLW,KMo,LTHG,Lu}.

Combining \eqref{onyva} and \eqref{onyvaa}, we obtain a closed system on $ (\rho,{\rm u},{\rm B^*}) $, called the compressible XMHD equations \cite{BC1}. The  incompressible version \eqref{syssimpli} can be obtained by  assuming that $ \rho $ is constant and by introducing a Lagrange multiplier (in place of the pressure law).
From a theoretical point of view, the introduction of $ {\rm B^*} $ is not at all necessary. Besides, it is not a prerequisite at the level of SLM\footnote{The role of Theorem \ref{maintheo} is to make the transition from $ ({\rm u},{\rm B}^*) $ to $ U_0 = \PP \, U_0$ (from which $ {\rm B}^* $ is absent).}. And it is a little bit misleading because it leads to a presentation of XMHD which connects the two state variables $ {\rm u}$ and $ {\rm B^*} $. Yet, in order to recover well-posedness, we know from \cite{BC1} that $ \nabla \times {\rm u}$ and $ {\rm B^*} $ (or $ {\rm u} $ and the magnetic potential $ {\rm A^*} $ with $ \nabla \times {\rm A^*} = {\rm B^*}$) should be associated.
Another option is to consider directly the redressed state variable $ \cU_0 $.

The formulation \eqref{syssimpli} of XMHD has the clear advantage of simplicity. It avoids the need to use Fourier multipliers, as in \eqref{modu0prepa} through $ \PP $. But it is strongly restricted to the manipulation of prepared data.
One benefit of our asymptotic analysis is to justify the FLM model \eqref{modu0} in the case of general data. The equation \eqref{modu0} encompasses both \eqref{modu0prepa} and \eqref{syssimpli}. It provides a natural extension to the XMHD model \eqref{syssimpli} of physicists, by  taking into account the impact of oscillations. The system \eqref{modu0} is not easily accessible but it is more intrinsic and more general in scope. It would be interesting to interpret in terms of physics the several new inputs brought by \eqref{modu0}.

%%%%%%%%%%%%%%%%%%%%%%%%%%
%%%%%%%%%%%%%%%%%%%%%%%%%%

\subsection{XMHD viewed from Hamiltonian  mechanics}
\label{sub:hamiltonian}
XMHD can be interpreted as a Hamiltonian system like $ \part_t \cV = \cJ(\cV) \ \part_{\cV} \cH (\cV) $, where the state vector $ \cV $ stands for a point in a Hilbert space. The content of the Poisson operator $ \cJ $ and of the Hamiltonian $ \cH $ is detailed in \cite{AKY}. In the compressible version\footnote{See \cite{BC1} for the incompressible case.} of XMHD, the infinite dimensional phase space is spanned by $ (\rho,u,B^*) $ and the conserved energy $ \cH $ is given (for some internal energy function $ U $) by
\[
\cH= \int_{\TT^3} \left \lbrace \rho \, \bigg( \frac{\vert u\vert^2}{2} + U(\rho) \bigg)
+ \frac{\vert B\vert^2}{2} + d_e^2 \ \frac{\vert \nabla \times B\vert^2}{2 \, \rho} \right \rbrace \, dx , \qquad B^*= B + d_e^2 \ \nabla \times \bigg( \frac{ \nabla \times B}{\rho} \bigg) \,.
\]
The Hamiltonian formalism of XMHD is highly instructive. It has been recently developed in \cite{AKY,DML16,KLMLW,KMo,LMMbis}, see also the references therein. A noncanonical Poisson bracket can be specified \cite{AKY,DML16}. There is a non-trivial center for  the governing Poisson algebra, yielding Casimir elements (helicities) \cite{AKY,LMMbis}. This approach helps to elucidate geometrical structures inside XMHD. It leads also to interesting applications.

The first outcome is about  magnetic reconnection, which happens when the topology of magnetic lines may change under the evolution. While resistive reconnection is today a well-established fact \cite{CCL}, the understanding of the  collisionless  reconnection mechanisms\footnote{Collisionless (also called ``fast'' or ``inertial'') reconnection is ubiquitous in laboratory and astrophysical plasmas. Prototypes of such phenomena are for instance given by solar flares.} remains in progress  \cite{AL,AGMDG,GTAM}. Mathematically, this raises the interesting open question of singularity formation for  XMHD.

Another notable motivation is the study of turbulence through the tools of statistical theory\footnote{As we have done, this means to consider periodic solutions and to perform a Fourier analysis. But that is where the comparison ends.}  \cite{AGMDG,SCD09}. In the inertial regime, there are power laws with specific spectral indices \cite{ALM}. For XMHD, there are also a Liouville's theorem, absolute equilibrium states and (direct) inertial cascades \cite{MLM}.

%%%%%%%%%%%%%%%%%%%%%%%

\subsection{XMHD viewed from mathematical perspectives}
\label{sub:math}
XMHD is the continuation of recent advances in the domain of partial differential equations. It can be inferred either as an extension of MHD (Paragraph \ref{subsub:math1}) or from EMTF (Paragraph \ref{subsub:math2}).

%%%%%%%%%%%%%%%%%%%%%%%

\subsubsection{From MHD to XMHD}
\label{subsub:math1}
The reader can refer to the introduction of the article \cite{BC1} for historical details about the passage from MHD to Hall MHD, and ultimately to XMHD. In fact, progress has been made step by step. In past years, the focus was mainly on Hall MHD with stability results in the presence of resistivity \cite{Chae} and strong amplification mechanisms in the purely  collisionless context \cite{ChaeW,JO}. It is only relatively lately  \cite{BC1} that inertial effects ($ d_e > 0 $) have been incorporated. At first sight, the equations of XMHD appear much more complicated than those of Hall MHD, and they may give a false impression of  ill-posed. This is probably why XMHD has been little explored.

%%%%%%%%%%%%%%%%%%%%%%%

\subsubsection{From EMTF to XMHD}
\label{subsub:math2}
The importance of EMTF for plasma physics has long been recognized. The EMTF system falls under the scope of quasilinear symmetric systems. As such, it is locally well-posed. From there, most efforts have focused on the long time existence, with the help of dissipation or relaxation terms  \cite{DLZ,ISYG,XXK} or under smallness and irrotational assumptions \cite{GIP}.

There are also many asymptotic results concerning EMTF, in various regimes implying Raman instabilities \cite{DLT} or non-relativistic and quasineutral features, see  \cite{LPX,LPW,PL,PXZ} and the numerous references therein. The common point of these latter contributions is to work under very restrictive assumptions, which are aimed at eliminating most two-fluid effects. As we have explained in this text, a key to go further is to take into account the full range of inertial effects.

Besides, the low Mach number limits  have been intensively studied for compressible Euler systems \cite{Ala08,Ga,KM,K,LM,MS,S07} and even for ideal MHD equations \cite{BC2,JJLX,JSX19}. But, curiously enough in view of the fundamental importance of EMTF, they do not appear to have been considered in a two-fluid context. Again, the major obstacle comes from a good understanding of  inertial features.

\medskip

{\bf Conclusions}. Theorem \ref{maintheo} makes for the first time the link between
the important theory of incompressible limits  (or of low Mach number limits) and XMHD. This confirms that XMHD is a fundamental model in plasma physics, which is more relevant and complete than MHD in many respects.

Indeed, XMHD simplifies the description of two-fluid conductive media. It preserves many essential two-fluid features and it furnishes  indispensable means of absorbing Hall instabilities and beyond of investigating inertial physics. XMHD should draw the attention in the coming decades. It is a gateway to many contemporary scientific challenges such as plasma turbulence, inertial effects  and fast magnetic reconnection. It is a wealth of interesting problems at the interface between physics and mathematics.

\bigskip

\end{document}